\newcommand{\cal}{\mathcal}
\newcommand{\rk}{{\rm rk}}
\newcommand{\lra}{\longrightarrow}
\newcommand{\sJ}{{\mathcal J}}
\newtheorem{lemma1}{}[section]
\newenvironment{lemma}{\begin{lemma1}{\bf Lemma.}}{\end{lemma1}}
\newenvironment{theorem}{\begin{lemma1}{\bf Theorem.}}{\end{lemma1}}
\newenvironment{proposition}{\begin{lemma1}{\bf Proposition.}}{\end{lemma1}}
\newenvironment{corollary}{\begin{lemma1}{\bf Corollary.}}{\end{lemma1}}
\newenvironment{remarks}{\begin{lemma1}{\bf Remarks.}\rm}{\end{lemma1}}
\newenvironment{conjecture}{\begin {lemma1}{\bf Conjecture.}}{\end{lemma1}}
\newenvironment{remark*}{{\bf Remark.}}{}
\newenvironment{example*}{{\bf Example.}}{}
\newcommand{\R}{\ensuremath{\mathbb{R}}}
\newcommand{\Q}{\ensuremath{\mathbb{Q}}}
\newcommand{\Z}{\ensuremath{\mathbb{Z}}}
\newcommand{\C}{\ensuremath{\mathbb{C}}}
\newcommand{\N}{\ensuremath{\mathbb{N}}}
\newcommand{\merom}[3]{\ensuremath{#1:#2 \dashrightarrow #3}}
\newcommand{\holom}[3]{\ensuremath{#1:#2  \rightarrow #3}}
\newcommand\sF{{\mathcal F}}
\newcommand\sI{{\mathcal I}}
\newcommand\sL{{\mathcal L}}
\newcommand\sO{{\mathcal O}}
\newcommand\sS{{\mathcal S}}
\newcommand\bR{{\mathbb R}}
\newcommand\bC{{\mathbb C}}
\newcommand\bN{{\mathbb N}}
\DeclareMathOperator*{\supp}{Supp}
\title{Uniformisation in dimension four: \\
towards a conjecture of Iitaka} 
\date{March 28, 2011}
\subjclass[2000]{32Q30, 32J27, 14E30, 14J35}
\keywords{universal cover, MMP}
\author{Andreas H\"oring}
\author{Thomas Peternell}
\author{Ivo Radloff}
\address{Andreas H\"oring, Universit\'e Pierre et Marie Curie and Albert-Ludwig Universit\"at Freiburg} 
\curraddr{Mathematisches Institut, Albert-Ludwigs-Universit\"at
  Freiburg, Eckerstra{\ss}e 1, 79104 Freiburg im Breisgau, Germany}
\email{hoering@math.jussieu.fr}
\address{Thomas Peternell, Mathematisches Institut, Universit\"at Bayreuth, 95440 Bayreuth, 
Germany 
}
\email{thomas.peternell@uni-bayreuth.de}
\address{Ivo Radloff, Mathematisches Institut, Universit\"at Bayreuth, 95440 Bayreuth, 
Germany 
}
\email{ivo.radloff@uni-bayreuth.de}
\begin{document}

\begin{abstract} 
Let $X$ be a compact K\"ahler manifold whose universal covering is $\C^n$.
A conjecture of Iitaka claims that $X$ is a torus,  up to finite \'etale cover.
We prove this conjecture in various cases in dimension four. 
We also show that in the projective case Iitaka's conjecture
is a consequence of the non-vanishing conjecture.
\end{abstract}

\maketitle

\section{Introduction}

The following conjecture was posed by Iitaka:

\begin{conjecture} {\rm ($I_n$)}
Let $X$ be a compact K\"ahler manifold of dimension $n$ such that 
the universal covering $\tilde X$ is biholomorphic to $\C^n$.
Then $X$ is a torus, up to finite \'etale cover.
\end{conjecture}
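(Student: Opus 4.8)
The plan is to show that the hypothesis forces $K_X$ to be numerically trivial and then to conclude via the Beauville--Bogomolov decomposition. First I would record the one elementary consequence of the assumption: a non-constant morphism $\mathbb{P}^1 \to X$ lifts, $\mathbb{P}^1$ being simply connected, to a morphism $\mathbb{P}^1 \to \tilde X = \mathbb{C}^n$, which is necessarily constant; hence $X$ carries no rational curves. By the cone theorem in the projective case, respectively the known parts of the Kähler minimal model program in low dimension, the absence of rational curves forces $K_X$ to be nef. Thus from the outset $X$ is minimal, and everything reduces to proving $K_X \equiv 0$.

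Granting $K_X \equiv 0$, the argument closes quickly: a finite \'etale cover of $X$ splits as $T \times \prod_i Y_i \times \prod_j Z_j$ with $T$ a torus, the $Y_i$ Calabi--Yau and the $Z_j$ irreducible holomorphic symplectic, so the universal cover is $\mathbb{C}^k \times \prod_i Y_i \times \prod_j Z_j$; since this must be biholomorphic to $\mathbb{C}^n$, which is Stein and therefore contains no compact positive-dimensional subvariety, no factor $Y_i$ or $Z_j$ can occur, and $X$ is a torus up to finite \'etale cover.

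The substance, then, is to deduce $K_X \equiv 0$ from ``$K_X$ nef and $\tilde X = \mathbb{C}^n$''. I would proceed along the Kodaira dimension. Non-vanishing --- true in dimension $\le 3$, and assumed in the projective case in general --- gives $\kappa(X) \ge 0$. The case $\kappa(X) = \dim X$ is to be ruled out by hyperbolicity: having no rational curves, $X$ coincides with its canonical model, so $K_X$ is ample and the K\"ahler--Einstein metric of negative Ricci curvature pulls back to a complete metric of the same type on $\mathbb{C}^n$, which is impossible (already for $n = 1$, where $\tilde X$ is the disc). For $0 < \kappa(X) < \dim X$ I would use abundance (known in dimension $\le 3$, hence applicable to the fibres in dimension $4$) to realise the Iitaka fibration as a morphism $f\colon X \to Y$; its general fibre $F$ is smooth, has $K_F$ torsion, and contains no rational curve, so in the cases where one can certify that such an $F$ is a torus up to finite cover the map $f$ becomes, after base change, an isotrivial torus fibration, while the base $Y$ carries a big orbifold canonical class --- and the latter is incompatible with $\mathbb{C}^n$ sitting over $X$, since $\mathbb{C}^n$ covers nothing of hyperbolic type. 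Finally, when $\kappa(X) = 0$ one upgrades this to $\nu(K_X) = 0$, again an abundance-type statement available in dimension $\le 3$ and in various fourfold situations, to obtain $K_X \equiv 0$.

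The hard part --- and the reason the theorem comes only ``in various cases'', and in the projective category only modulo non-vanishing --- is precisely this reduction to $K_X \equiv 0$. Its ingredients are not unconditionally available in dimension $4$: non-vanishing and abundance are conjectural; the intermediate Kodaira dimension case needs in addition that every Calabi--Yau or hyperk\"ahler manifold of dimension $\le 3$ without rational curves is a torus up to cover (equivalently, that honest CY/HK factors always carry rational curves), which is open; and the ``fake torus'' case $q(X) = 0$ has to be excluded separately. A subsidiary technical point I would pin down early is the injectivity of $\pi_1(F) \to \pi_1(X)$ for a fibre $F$ of the Iitaka or Albanese fibration, since this is what lets the induction on universal covers of fibres run.
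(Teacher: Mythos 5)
The statement you are proving is the conjecture $I_n$ itself, which the paper does not prove in general: it establishes it only in special cases in dimension four and, in the projective case, conditionally on non-vanishing. Your outline reproduces, essentially, the heuristic from the paper's introduction (``abundance implies Iitaka's conjecture'': no rational curves, hence $K_X$ nef, hence a holomorphic Iitaka fibration which by Koll\'ar/Campana--Zhang is a torus fibration over a base of general type, killed by Kobayashi--Ochiai), and you are candid that its inputs are conjectural. That honesty is welcome, but two of your unflagged steps are already gaps. First, ``absence of rational curves forces $K_X$ to be nef'' is only valid in the projective case via the cone theorem; for non-algebraic K\"ahler fourfolds there is no cone theorem, and the paper can only prove that $K_X$ is \emph{pseudo-effective} (Proposition \ref{propositionpseudoeffective}), via $\chi(X,\sO_X)=0$ and Brunella's theorem on foliations defined by holomorphic $3$-forms --- a genuinely different mechanism. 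Second, in the intermediate Kodaira dimension case you propose to certify that the Iitaka fibres are tori by showing ``CY/HK factors without rational curves do not exist'', which is open and unnecessary: the working argument uses that the fibres inherit a generically large fundamental group from $X$, so that $CZ_k$/$T_k$ in lower dimension (known for $k\le 3$) identifies them as bimeromorphic tori, after which \cite{Nak99b} or \cite{Kol93} globalises the fibration.

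More importantly, your reduction does not touch the cases where the paper's actual work lies. In the projective case the paper does not assume abundance but only non-vanishing in dimension $n-1$, and the passage from that to semiampleness of $K_X$ (Theorem \ref{theoremsemiample}) requires running an MMP with scaling, a special termination argument (Lemma \ref{lemmatermination}), and the Demailly--Hacon--P\u{a}un extension theorem to lift sections from a non-uniruled component of a pluricanonical divisor --- none of which appears in your sketch. In the non-algebraic case the paper splits according to $a(X)$ and whether $X$ is simple: the case $0<a(X)<4$ goes through the algebraic reduction, the non-simple case through quotient fibrations and almost-abelianity of $\pi_1$, and the simple case without surfaces or divisors through an entirely different construction (a projectively flat quotient of $\Omega^3_X$, L\"ubke's inequality, and Mok's factorisation theorem for unitary representations of $\pi_1$). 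Your proposal, while a correct description of why the conjecture \emph{should} be true, would not yield any of the unconditional statements of Theorem \ref{maintheorem}.
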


Nakayama proved the conjecture for projective manifolds of dimension at most three and more generally for K\"ahler manifolds 
of Kodaira dimension $\kappa(X) \ge n-3$ or irregularity $q(X) \ge n-3$ (\cite{Nak99}, \cite{Nak99b}). 
His methods do not work in general in dimension four (and higher) without assuming the abundance conjecture.
We will mainly be concerned with this dimension. Our main results can be summarized as follows.

\begin{theorem} \label{maintheorem}
Let $X$ be a smooth compact K\"ahler fourfold with universal cover $\tilde X \simeq \C^4.$ 
Then $X$ is a torus (up to finite \'etale cover) if one of the following conditions holds.
\begin{enumerate}
\item $X$ is projective with Kodaira dimension $\kappa (X) \geq 0$;
\item $X$ is not projective, but covered by positive-dimensional compact subvarieties;
\item $X$ does not contain surfaces or divisors;
\item $\kappa(X) \geq 0$ and $K_X^2=0$.
\end{enumerate} 
\end{theorem}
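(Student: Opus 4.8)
The plan is to reduce every case to the single statement $K_X\equiv 0$, and then invoke the Beauville--Bogomolov decomposition. First I would record the features common to all four cases. Since $\tilde X\simeq\C^4$ contains no rational curve, $X$ contains none either: a non-constant morphism $\PP^1\to X$ would lift to a non-constant morphism $\PP^1\to\C^4$, which is impossible. Hence $K_X$ is nef by the cone theorem and $X$ is already a minimal model. Next, $\chi(X,\O_X)=\chi_{(2)}(\C^4,\O)=0$ by Atiyah's $L^2$-index theorem, as $\C^4$ has vanishing $L^2$-Dolbeault cohomology. Finally, the conclusion is always reached the same way: once $K_X\equiv 0$ is known, a finite \'etale cover of $X$ splits, by Beauville--Bogomolov, as a product of a complex torus with Calabi--Yau and hyperk\"ahler factors; the universal cover of such a product is $\C^4$, so the positive-dimensional, simply connected Calabi--Yau and hyperk\"ahler factors cannot occur, and $X$ is a torus up to finite \'etale cover. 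So in each case it suffices to prove $K_X\equiv 0$, or else to reduce to Nakayama's theorem or to a lower-dimensional instance of $(I_n)$.

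For the cases (a) and (d), where $\kappa(X)\ge 0$, I would argue as follows. As $K_X$ is nef, Nakayama's theorem ($\kappa\ge n-3$ or $q\ge n-3$) already finishes if $\kappa(X)\ge 1$ or $q(X)\ge 1$, so I may assume $\kappa(X)=0$ and $q(X)=0$. Then $\chi(X,\O_X)=0$ and the Hodge decomposition force
\[
 h^0(X,\Omega^3_X)=1+h^0(X,\Omega^2_X)+h^0(X,\Omega^4_X)\ge 1,
\]
so there is a non-zero holomorphic $3$-form, equivalently a non-zero map $\O_X(-K_X)\to T_X$. Since $\kappa(X)=0$, the nef bundle $K_X$ is not big, hence its nef dimension (the dimension of the base of its nef reduction) is at most $3$; if that dimension is $0$ then $K_X\equiv 0$ and we are done, while if it is $1$, $2$ or $3$ the nef reduction presents $X$ as covered by positive-dimensional compact fibres on which $K_X$ is numerically trivial, putting us in the situation of case (b) (whose argument does not use non-projectivity). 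In case (d) the extra hypothesis $K_X^2=0$ gives $\nu(K_X)\le 1$ for the numerical dimension: the subcase $\nu(K_X)=0$ is $K_X\equiv 0$, and in the subcase $\nu(K_X)=1$ one has $\kappa(X)\in\{0,1\}$, with $\kappa(X)=1$ handled by Nakayama and $\kappa(X)=0$ excluded by combining the known cases of abundance in numerical dimension one with the $3$-form above.

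For the non-algebraic cases (b) and (c) I would use the algebraic reduction $X\dashrightarrow Y$, with $Y$ projective of dimension $a(X)\le 3$. In case (c), since $X$ has no surface and no divisor, pulling back an ample divisor from $Y$, or taking the image in $X$ of a general fibre of a resolution of the reduction, would produce a divisor or a surface unless $a(X)\le 1$; so $a(X)\le 1$. If $a(X)=1$ one gets a fibration $X\to C$ over a curve whose general fibre $F$ is a compact K\"ahler threefold; granting that the universal cover of $F$ --- a closed submanifold of $\C^4$ --- is again $\C^3$, Nakayama's theorem applies (a K\"ahler threefold with universal cover $\C^3$ is non-uniruled, hence of non-negative Kodaira dimension), so $F$ is a torus up to \'etale cover, hyperbolicity of the induced map $\C^4\to C$ forces $C$ to be elliptic, and one then assembles a torus structure on $X$; if $a(X)=0$ one shows the Albanese map of $X$ must be a surjection onto a fourdimensional torus and then, as $X$ has no rational curves and no exceptional divisors, an isomorphism. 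In case (b) one takes a covering family of compact positive-dimensional subvarieties of minimal dimension; its structure yields either a fibration $X\dashrightarrow B$ with strictly lower-dimensional general fibre --- whose universal cover again embeds as a closed submanifold of $\C^4$, so one can induct on the dimension --- or enough geometry to conclude the torus structure directly.

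The hard part will be the instance of the abundance conjecture buried in case (a): proving $K_X\equiv 0$ for a minimal fourfold with $\kappa(X)=q(X)=0$ and universal cover $\C^4$. Since abundance for fourfolds is not available, one must extract this either from the holomorphic $3$-form coming from $\chi(X,\O_X)=0$ --- by analysing the rank-one foliation defined by $\O_X(-K_X)\hookrightarrow T_X$ through Miyaoka's generic semipositivity --- or from the nef reduction feeding back into the machinery of case (b), or by isolating precisely the known partial cases of abundance that suffice. In the non-algebraic cases the parallel difficulty is to show that the universal cover of the general fibre of the relevant fibration is again a complex affine space, which is exactly what makes the induction on the dimension run.
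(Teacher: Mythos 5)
Your overall architecture (reduce everything to $K_X\equiv 0$ plus Beauville--Bogomolov, exploit $\chi(X,\sO_X)=0$ to produce holomorphic forms, induct along fibrations) is the same as the paper's, but the proposal stops short exactly where the paper does its real work, and several intermediate claims fail. In case (a), after reducing to $\kappa(X)=q(X)=0$, you assert that ``$K_X$ not big'' forces the nef dimension of $K_X$ to be at most $3$; this is false --- a nef non-big line bundle can have nef dimension equal to $\dim X$ (already on an abelian surface, a nef class with irrational slope on the boundary of the nef cone has $\nu=1$ but trivial nef reduction), and then no fibration is produced. The actual content here is a proof that a minimal fourfold with $\kappa=0$ and no rational curves has $K_X\sim_\Q 0$; the paper gets this in Lemma~\ref{lemmatermination} and Lemma~\ref{lemmaminimalmodel} by running an MMP with scaling over the rational-curve-free model, applying the extension theorem of \cite{DHP10} to a non-uniruled component $S$ of a pluricanonical divisor, and invoking non-vanishing in dimension $3$ for $(S,\Delta_S)$; nothing in your sketch replaces this. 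In case (d) you invoke ``the known cases of abundance in numerical dimension one'', but abundance for $\nu(K_X)=1$ is open for fourfolds (Miyaoka's theorem is a threefold statement); moreover in the K\"ahler setting there is no cone theorem, so even the nefness of $K_X$ you use throughout has to be \emph{proved} (Lemma~\ref{lemmacanonicalsystem} and the remark following it). The paper instead shows that each component $D$ of a pluricanonical divisor is a normal connected component with canonical singularities and $K_D\equiv 0$, identifies $D$ as an \'etale torus quotient (Lemma~\ref{lemmathreefold}), and then runs a deformation argument (Ran, Miyaoka) in Proposition~\ref{propositionabundance} to force $h^0(nD,\sO_{nD}(nD))$ to grow, giving $\kappa(X)\ge 1$ and reducing to Nakayama.

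Case (c) is where the proposal is furthest from a proof: if $X$ has no divisors then $a(X)=0$, and after the easy reductions one is left with $\tilde q(X)=0$, so a plan resting on ``the Albanese map is a surjection onto a four-dimensional torus'' has no starting point. The paper's Theorem~\ref{theoremfourd} treats this case by a mechanism absent from your sketch: the two independent $3$-forms give a saturated subsheaf $\sO_X^{\oplus 2}\subset\Omega^3_X$; a stability analysis of the quotient, L\"ubke's inequality together with Lemma~\ref{lquadomega} and the pseudo-effectivity of $K_X$ (Proposition~\ref{propositionpseudoeffective}), and Bando--Siu produce a projectively flat bundle, hence a unitary representation of $\pi_1(X)$ with infinite image, and Mok's theorem then contradicts $\tilde q(X)=0$ and $a(X)=0$. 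For case (b) your outline is closest to the paper (algebraic reduction plus induction on the dimension), but to make the induction close one must strengthen $I_n$ to the conjecture $T_n$ and supply the specific inputs of Lemma~\ref{lemmainduction} and Proposition~\ref{propositionnotsimple}: Nakayama's relative torus-fibration theorem, Campana's almost-abelianity of extensions of abelian fundamental groups, the finiteness of divisors when $a(X)=0$, Campana's algebraic-connectedness criterion for projectivity, and the almost-homogeneity of general-type fibres of the algebraic reduction. As written, the proposal identifies the right skeleton but leaves all three hard steps --- the $\kappa=0$ abundance statement, the flat-bundle argument for simple fourfolds without divisors, and the deformation argument when $K_X^2=0$ --- unproved.
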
 

The theorem is a summary of the Corollaries \ref{corollaryfoura}, \ref{corollaryfourb},  \ref{corollaryfourc},
Theorem \ref{theoremfourd}, and the Corollary \ref{corollaryfoure}.
The theorem establishes Iitaka's conjecture in dimension four except the following two cases.

- First, there is the potential case that $X$ is projective with $K_X$
nef and $\kappa (X) = - \infty.$ This case should not exist: a projective
manifold with $\kappa (X) = - \infty$ should be uniruled. This is however
only known in dimension at most three. 

- Second, $X$ could not be covered by positive-dimensional compact subvarieties,
but contain some surfaces or divisors. Again this case should not exist:
since we expect $X$ to be a torus  (up to finite \'etale cover), it should either
have a covering family of compact subvarieties or have no subvarieties at all. 

It is important to note that for a smooth projective $n$-fold $X$ with universal cover $\tilde X \simeq \C^n$ 
the canonical bundle $K_X$ is automatically nef; otherwise
by the cone theorem, $X$ would contain a rational curve, which of course is not possible since $\tilde X$ is Stein. 
Now the abundance conjecture predicts that some multiple of $K_X$ is spanned, defining a holomorphic Iitaka fibration $f: X \lra Y$. 
Since $X$ has generically large fundamental group, a result of Koll\'ar \cite[6.3. Thm.]{Kol93} 
says that - possibly after finite 
\'etale cover - 
$f$ is a smooth abelian group scheme over $Y$ where the base $Y$ has general type. However, a classical theorem of Kobayashi and Ochiai \cite{KO75}
says there is no non degenerate map from $\C^n$ to a (positive dimensional) manifold of general type. Hence $Y$ must be a point and 
$X$ is abelian as desired. 

In summary the abundance conjecture implies Iitaka's conjecture. 

The abundance conjecture is known to hold true for projective manifolds of dimension at most three
\cite{Kol92}, but is essentially open in higher dimensions.
We are able to show that Conjecture $I_n$ is actually a consequence of a weaker conjecture, the so-called non-vanishing conjecture:

\begin{conjecture} (Non-vanishing for klt pairs) 
Let $X$ be a $n$-dimensional normal projective variety and $\Delta$ an effective $\Q$-divisor
such that $(X, \Delta)$ is klt and $K_X+\Delta$ is nef. Then we have $\kappa(K_X+\Delta) \geq 0$.
\end{conjecture}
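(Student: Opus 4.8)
The plan is to induct on $n = \dim X$ and to stratify the argument by the numerical dimension $\nu := \nu(K_X + \Delta)$ of the nef class $K_X + \Delta$. Two extreme values are already within reach. If $\nu = 0$, then $K_X + \Delta$ is numerically trivial, and Nakayama's non-vanishing for klt pairs of numerical dimension zero gives $\kappa(K_X + \Delta) = 0$. If $\nu = n$, then $K_X + \Delta$ is nef and big, hence semiample by the base-point-free theorem, so $\kappa(K_X + \Delta) = n$. It remains to treat the intermediate range $0 < \nu < n$.

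For the intermediate range the strategy is to descend to lower dimension through a fibration and to close the induction with the canonical bundle formula. Suppose one can produce, on a suitable birational model $\mu \colon X' \to X$ with $K_{X'} + \Delta' = \mu^*(K_X + \Delta)$, a fibration $f \colon X' \to Y$ with $0 < \dim Y < n$ whose general fibre $F$ satisfies $\kappa\bigl((K_{X'} + \Delta')|_F\bigr) \geq 0$; this restriction is again of the form $K_F + \Delta_F$ for a klt pair, so it holds by the inductive hypothesis. The canonical bundle formula of Kawamata, Fujino and Mori then yields a klt pair $(Y, B_Y)$ and a nef $\Q$-divisor $M_Y$ with $K_{X'} + \Delta' \sim_{\Q} f^*(K_Y + B_Y + M_Y)$, reducing the problem to non-vanishing on $Y$. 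Since the descended class is a priori only pseudoeffective and carries the extra nef summand $M_Y$, one must run the induction with the pseudoeffective formulation of the conjecture, allowing a nef twist; this is equivalent to the stated nef formulation via the minimal model program of Birkar--Cascini--Hacon--McKernan, and is the formulation that is stable under the descent.

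Everything therefore hinges on the construction of the fibration $f$, and this is where I expect the decisive obstacle. When the irregularity $q(X)$ is positive there is a genuine candidate: the Albanese morphism $X \to \mathrm{Alb}(X)$. Passing to its Stein factorisation and combining Miyaoka's generic semipositivity of $\Omega^1_X$ with positivity of direct image sheaves lets one control the Kodaira dimension of the image and carry out the descent; this is precisely the mechanism behind Nakayama's results in low dimension and under $q(X) \geq n - 3$. The hard case is $q(X) = 0$ together with $0 < \nu < n$: here the Albanese is trivial, and the only intrinsic candidate fibration is the Iitaka fibration of $K_X + \Delta$ itself, whose existence already presupposes $\kappa(K_X + \Delta) \geq 0$ --- exactly the conclusion sought. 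Breaking this circularity is the genuine content of the conjecture.

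To break it I would pursue two complementary routes. The first is analytic, in the tradition of Siu: twist by a small ample $A$, use Nadel vanishing and extension/invariance-of-plurigenera techniques to produce sections of $m(K_X + \Delta) + A$ with controlled base loci, and then let $A \to 0$, using the Nakayama--Zariski decomposition to keep the asymptotic vanishing under control and extract a genuine pluricanonical section in the limit. The second is the inductive framework relating non-vanishing, existence of minimal models and abundance (Birkar and others), which reduces the conjecture in dimension $n$ to instances of the minimal model program in lower dimension, concentrating the whole difficulty into a single base-case non-vanishing input at each step. In either route the obstruction is identical --- producing one nonzero pluricanonical section, equivalently a fibration, in the zero-irregularity, intermediate-numerical-dimension case --- and surmounting it is the genuinely open part of the problem.
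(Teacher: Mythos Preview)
The statement you were asked to prove is stated in the paper as a \emph{conjecture}, not a theorem; the paper offers no proof and instead uses it as a standing hypothesis (``Suppose that the non-vanishing Conjecture holds in dimension $n-1$'') in Theorems \ref{theoremprojectivecase} and \ref{theoremsemiample}. So there is no proof in the paper to compare your proposal against.

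Your proposal is not a proof either, and you are candid about this. The reduction you sketch --- handling $\nu=0$ via Nakayama and $\nu=n$ via the base-point-free theorem, and for $0<\nu<n$ trying to descend along a fibration using the canonical bundle formula and an inductive hypothesis --- is a reasonable summary of the known landscape. But as you yourself say in the final paragraph, the entire argument hinges on \emph{producing} the fibration in the zero-irregularity, intermediate-$\nu$ case, and the only intrinsic candidate is the Iitaka fibration of $K_X+\Delta$, whose existence is precisely what you are trying to establish. Neither of the two routes you outline (Siu-type analytic extension with $A\to 0$, or the Birkar-style inductive reduction) is known to close this gap in general; both are active research programmes, not completed arguments. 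So what you have written is an informed survey of strategies together with an accurate identification of the obstruction, but it does not constitute a proof, and it should not be presented as one. The honest summary is: the statement is open, the paper treats it as such, and your write-up correctly locates where the difficulty lies.
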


Our result is 

\begin{theorem} 
Let $X$ be a projective manifold of dimension $n$ whose universal cover is $\C^n$.
Suppose that $\kappa(X) \geq 0$ and that the non-vanishing conjecture
holds in dimension $n-1$. Then $X$ is an \'etale quotient of a torus.
\end{theorem}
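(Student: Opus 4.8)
The plan is to reduce the statement to the \emph{abundance} of $K_X$ and then run the structural argument already sketched in the introduction; the non‑vanishing conjecture in dimension $n-1$ supplies exactly the piece of abundance that is not yet available. For the preliminary reductions: $K_X$ is nef (a rational curve on $X$ would have simply connected normalisation and hence lift to the Stein manifold $\tilde X\simeq\C^n$, which is impossible, so the cone theorem applies). The same lifting principle shows that $X$ has large fundamental group in Koll\'ar's sense: a positive‑dimensional subvariety $Z\subset X$ with $\pi_1(Z)\to\pi_1(X)$ of finite image would, after a finite \'etale cover, lift to a compact positive‑dimensional subvariety of $\C^n$, which does not exist. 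Moreover the universal covering map $\C^n\to X$ is a nondegenerate holomorphic map, so by Kobayashi--Ochiai \cite{KO75} $X$ is not of general type; hence $0\le\kappa(X)\le n-1$. Finally we may assume $q(X)\ge 1$ (true after a finite \'etale cover, which is harmless for the conclusion).

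\emph{Step 1: reduction to abundance for $X$.} Suppose we know that $K_X$ is semiample. Then $|mK_X|$ defines a holomorphic Iitaka fibration $f\colon X\to Y$ with $\dim Y=\kappa(X)$, and the argument of the introduction applies verbatim: by Koll\'ar \cite[6.3 Thm.]{Kol93}, after a finite \'etale base change $f$ is a smooth abelian group scheme over $Y$ with $Y$ of general type, whence the nondegenerate holomorphic map $\C^n\to X\to Y$ forces, by Kobayashi--Ochiai, $\dim Y=0$; so $\kappa(X)=0$ and $X$ is a torus up to finite \'etale cover. It therefore suffices to prove that $K_X$ is semiample — which will in particular force $\kappa(X)=0$.

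\emph{Step 2: the case distinction.} Assume first $\kappa(X)\ge 1$. On a smooth model $X'\to X$ the Iitaka map becomes a morphism $f'\colon X'\to Y$ with $1\le\dim Y=\kappa(X)$; its general fibre $F$ is smooth projective with $\kappa(F)=0$ and $\dim F\le n-1$. The non‑vanishing conjecture in dimension $\le n-1$ (which follows from the case $n-1$ by taking products with elliptic curves), together with the abundance theorem in dimension $\le 3$, yields — by the inductive treatment of abundance for fibre spaces — that $F$ has a good minimal model; by Lai's theorem on varieties fibred by good minimal models, $X'$ then has a good minimal model $X_{\min}$, and since $X$ is itself a minimal model of $X'$ containing no rational curves, $X\cong X_{\min}$, so $K_X$ is semiample. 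By Step 1 this gives $\kappa(X)=0$, a contradiction; hence $\kappa(X)=0$. In that case Kawamata's theorem makes the Albanese map $a\colon X\to A=\Alb{X}$ surjective with connected fibres; a general fibre $F$ has $\dim F=n-q(X)\le n-1$ and $K_F=K_X|_F$ nef, with $0\le\kappa(F)\le\kappa(X)=0$ by the non‑vanishing conjecture in dimension $n-1$ and by subadditivity over abelian varieties, so (as before) $K_F$ is torsion. The relative geometry over $A$ together with $\kappa(X)=0$ then forces $K_X$ itself to be torsion (the part of Ueno's conjecture $K_n$ that becomes available once the general Albanese fibre has a good minimal model); passing to a finite \'etale cover $X''$ with $K_{X''}=\O_{X''}$ and invoking the Beauville--Bogomolov decomposition, no Calabi--Yau or hyperk\"ahler factor can occur, since it would be a simply connected positive‑dimensional subvariety and hence lift to a compact subvariety of $\C^n$. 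Thus $X''$ is a torus and $X$ is a torus up to finite \'etale cover.

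\emph{Main obstacle.} The real content sits in the two places where non‑vanishing is used: proving that the general fibre of the Iitaka fibration, respectively of the Albanese map — a priori known only to have $\kappa=0$ and nef canonical class — actually has a \emph{good} minimal model, and then, in the Albanese case, propagating torsion‑ness of $K_F$ along $a$ to conclude that $K_X$ is torsion. This is precisely the inductive ``abundance from non‑vanishing'' mechanism in dimension $\le n-1$. Once it is in place, the lifting principle, Koll\'ar's structure theorem, Kobayashi--Ochiai, Lai's theorem, and the rigidity of a minimal model without rational curves assemble the conclusion.
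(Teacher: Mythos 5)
Your Step 1 (semiample $K_X$ $\Rightarrow$ torus, via Koll\'ar's structure theorem and Kobayashi--Ochiai) matches the paper, which delegates precisely this step to \cite{Nak99}, and you have correctly located the crux: producing a good minimal model for a $\kappa=0$ variety from non-vanishing in lower dimension. But at exactly that point the proposal stops being a proof. The assertion that the non-vanishing conjecture in dimension $\le n-1$ ``yields, by the inductive treatment of abundance for fibre spaces, that $F$ has a good minimal model'' is not a citable theorem: deducing abundance, or even good minimal models for varieties with $\kappa=0$, from non-vanishing alone is open. The paper's actual mechanism uses the geometry of $X$ in an essential way. For $\kappa(X)=0$ one chooses a non-uniruled irreducible component $S$ of (a log resolution of) a divisor in $|mK_X|$ --- such a component exists by the negativity lemma precisely because $X$ carries no rational curves --- builds a plt pair $(X',S+\Delta)$ with $K_{X'}+S+\Delta$ supported on $S+\Delta$, runs an MMP whose termination is guaranteed by special termination over the rational-curve-free base (Lemma \ref{lemmatermination}), applies non-vanishing in dimension $n-1$ to the adjoint pair $(S,\Delta_S)$, and then lifts pluricanonical sections from $S$ to $X'$ by the extension theorem \cite[Cor.1.8]{DHP10}, contradicting $\kappa(X',S+\Delta)=0$. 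For $\kappa(X)\ge 1$ the same lemma is applied to the Iitaka fibres $F$, using that the normalisation of $\mu(F)\subset X$ contains no rational curves, and one concludes with \cite{Lai10}. None of this structure --- the choice of $S$, the special termination argument, the extension step --- appears in your text, and it cannot be replaced by the phrase ``the inductive abundance-from-non-vanishing mechanism''; that mechanism is the theorem to be proved.

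Two further gaps. The reduction ``we may assume $q(X)\ge 1$ after a finite \'etale cover'' is unjustified: an infinite fundamental group need not admit a finite-index subgroup with infinite abelianisation, and $\tilde q(X)>0$ is essentially part of the conclusion, not a harmless normalisation. And the $\kappa(X)=0$ branch, which argues via the Albanese map, subadditivity over abelian varieties, and ``the part of Ueno's conjecture $K_n$ that becomes available once the general Albanese fibre has a good minimal model,'' again invokes statements that are not available in dimension $n$ and whose hypotheses (the good minimal model for the Albanese fibre) are themselves unproven at that stage. In short, the proposal is an accurate roadmap of the reduction and of where the difficulty lies, but the theorem's actual content --- Lemmas \ref{lemmatermination} and \ref{lemmaminimalmodel} and their use in Theorem \ref{theoremsemiample} --- is missing.
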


The statement is based on a recent extension result of Demailly, Hacon and  P{\v a}un \cite{DHP10}.
Actually we prove

\begin{theorem} Let $X$ be a projective manifold of dimension $n$ without rational curves. Suppose that 
$\kappa (X) \geq 0$ and that the non-vanishing conjecture holds in dimension $n-1$. Then $K_X$ is semi-ample. 
\end{theorem}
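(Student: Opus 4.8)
The plan is to combine the cone theorem, an extension theorem, and abundance in one dimension less. Since $X$ contains no rational curves, the cone theorem forces $K_X$ to be nef; in particular $X$ is already a minimal model, so no MMP has to be run in dimension $n$, and the problem reduces to showing that the nef divisor $K_X$ is semi-ample, i.e.\ to abundance for $X$. Moreover, because $\kappa(X)\ge 0$ is assumed, the non-vanishing statement for $X$ itself costs nothing; the hypothesis asks only for its $(n-1)$-dimensional instances, and these will be used after cutting $X$ down by a divisor.

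Fix $m\ge 2$ with $|mK_X|\ne\emptyset$ and choose $D=\sum_i d_iS_i\in|mK_X|$. Fix a prime component $S=S_i$ and set $B=\tfrac1{d_i}D-S$, so that $K_X+S+B\sim_\Q\lambda K_X$ with $\lambda=1+\tfrac m{d_i}>1$, a nef and effective $\Q$-divisor. After the standard bookkeeping — choosing $S$ among the components of maximal coefficient (grouping them into a reduced $S$ and passing to a dlt pair when there is a tie), controlling $S$ relative to the stable base locus of $K_X$, and log-resolving only the boundary by modifications that do not meet the generic points of $S$, so that $X$ itself is untouched — adjunction produces a plt (or dlt) pair $(S,B_S)$ on the $(n-1)$-fold $S$ with $K_S+B_S=(K_X+S+B)|_S\sim_\Q\lambda\,K_X|_S$ nef. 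This is where the drop in dimension is used: by the non-vanishing conjecture in dimension $n-1$, fed into the minimal model theory of \cite{DHP10}, the pair $(S,B_S)$ has a good minimal model; since it is itself a minimal model, $K_S+B_S$, equivalently $K_X|_S$, is semi-ample.

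The extension theorem of \cite{DHP10}, applied to the pair $(X,S+B)$, now shows that for $N$ divisible enough the restriction $H^0\bigl(X,N(K_X+S+B)\bigr)\to H^0\bigl(S,N(K_S+B_S)\bigr)$ is surjective. As $|N(K_S+B_S)|$ is base-point free and $N(K_X+S+B)$ is a positive multiple of $K_X$, it follows that $\Bs|N'K_X|\cap S=\emptyset$ for all $N'$ sufficiently divisible. Running this over the components of $D$ and taking $N'$ a common multiple, $\Bs|N'K_X|$ becomes disjoint from $\supp D$; but $\tfrac{N'}{m}D\in|N'K_X|$ gives $\Bs|N'K_X|\subseteq\supp D$, so $\Bs|N'K_X|=\emptyset$, i.e.\ $K_X$ is semi-ample.

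The hard part is the middle step: producing a boundary $B$ for which $(X,S+B)$ meets the hypotheses of the \cite{DHP10} extension theorem — especially the condition on $S$ and the stable base locus, which is most delicate when $\kappa(X)=0$ and $D$ is the unique effective member of its system — and handling the singularities of $S$ through adjunction and the different rather than by resolving $X$, since any blow-up of $X$ would introduce rational curves and destroy the standing hypothesis. The one genuinely new input is that non-vanishing in dimension $n-1$ yields the existence of good minimal models, hence abundance, for klt pairs of that dimension; this is a substantial consequence of \cite{DHP10} together with the minimal model program, and is the reason the theorem remains conditional.
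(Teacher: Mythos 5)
Your opening reductions are fine ($K_X$ nef by the cone theorem, $X$ a minimal model, only the $(n-1)$-dimensional non-vanishing is needed), but the core of your argument has a gap that you yourself flag as ``the hard part'' and then do not close. The pair $(X,S+B)$ built from a member $D\in|mK_X|$ is in general neither plt nor dlt: the components $S_i$ can be non-normal with arbitrarily bad singularities and can meet each other non-transversally, and every extension theorem in \cite{DHP10} (including the restriction-surjectivity you invoke, which is in fact their \emph{conjectural} DLT extension statement, proved there only under extra hypotheses) is formulated for (log smooth or dlt) pairs together with a precise condition relating the boundary to the stable base locus. Making $(X,S+B)$ into such a pair forces a log resolution that modifies $X$, which you rule out because it creates rational curves; working ``through adjunction and the different'' on the unresolved $S$ is not a substitute, because no proven extension theorem applies in that generality. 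The paper's own device for exactly this difficulty is the opposite of yours: it \emph{does} blow up, obtaining $\mu\colon X'\to X$ with $(X',S+\Delta)$ plt, and exploits the fact that all rational curves so created are contracted by $\mu$, so that relative MMPs over $X$ still make sense and terminate (Lemma \ref{lemmatermination}, via \cite{BCHM10} and special termination). Only after running this relative MMP does one obtain a \emph{nef} klt pair $(S,\Delta_S)$ to which the non-vanishing hypothesis can be applied literally.

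The second gap is that you feed non-vanishing in dimension $n-1$ into \cite{DHP10} to conclude that $(S,B_S)$ has a \emph{good} minimal model, i.e.\ abundance in dimension $n-1$. That implication is not unconditional in \cite{DHP10}; their reduction of good minimal models to non-vanishing also requires their extension conjecture, so you are assuming strictly more than the theorem's hypothesis. The paper needs far less: only $\kappa(S,\Delta_S)\ge 0$ for the nef klt pair produced above (the verbatim non-vanishing statement), which via \cite[Cor.1.8]{DHP10} gives $\kappa(X',S+\Delta)\ge 1$ and hence a \emph{contradiction} in the case $\kappa(X)=0$ (Lemma \ref{lemmaminimalmodel}); the case $\kappa(X)\ge 1$ is then handled not by lifting sections from components of $D$ but by applying the $\kappa=0$ case to a general fibre of the Iitaka fibration and invoking \cite[Thm.0.2, Prop.2.4]{Lai10} to transfer ``good minimal model'' from the fibres, and from a birational model, back to $X$. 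So the architecture is genuinely different: yours aims at direct base-point freeness of $|N'K_X|$ via section extension, the paper's at an abstract good minimal model via a Kodaira-dimension contradiction plus Lai's fibration theorems; to repair yours you would need both a proof of the DLT extension conjecture in the relevant setting and abundance in dimension $n-1$, neither of which is available.
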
 

If $X$ is a non-algebraic compact K\"ahler manifold we encounter
a number of additional difficulties.
Since we do not have a cone theorem it is not clear at all if the canonical bundle $K_X$ is pseudoeffective
(or even nef). However we are able to show that in dimension four, $K_X$ is at least
pseudo-effective, i.e., its Chern class is represented by a positive closed current. 

This works for the following reason. If $X$ is a compact
K\"ahler manifold with generically large fundamental group, then $\chi(X,\sO_X) = 0$ unless
$X$ is of general type. This is particularly useful in the non-algebraic context. In fact, suppose $\dim X = 4$ and
$q(X) = 0$. A non-algebraic $X$ carries a
holomorphic $2-$form, therefore we obtain at least two independent holomorphic $3-$forms which give foliations on 
$X$ by curves.  Since the foliations 
cannot have rational leaves, a remarkable theorem of Brunella says that $K_X$ must be pseudo-effective. 

If $X$ is a meromorphic fibre space, then it is very natural to proceed by induction on the dimension. 
Therefore we need to amplify Conjecture $I_n$ to adapt it to induction procedures. This is done in Section 2. Natural fibre spaces occurring are the algebraic reduction and fibre spaces arising by forming 
quotients defined by covering families of subvarieties. The general theorem in Section 2 says that we basically can reduce our studies to projective manifolds and simple manifolds, i.e. manifolds which are 
not covered by positive-dimensional subvarieties.

The case of simple fourfolds is quite difficult. In view of the expected result it is
very natural to require $X$ to have ``not too many compact subvarieties'': a simple torus does not admit any subvariety at all. If we assume our fourfold $X$ not to have divisors and surfaces, then, arguing by contradiction, 
we can use the $2$- and $3$-forms mentioned above to construct
a unitary flat vector bundle on $X$, giving rise to a unitary representation of $\pi_1(X).$ In that situation a theorem of Mok gives either a map to a torus or a meromorphic map to a variety of general type, 
contradicting the simplicity of $X$. 

If $\kappa (X) \geq 0,$ we may study a divisor $D \in \vert mK_X \vert.$ 
At least when $K_X^2 = 0,$ or more generally, when $K_X^2 \cdot \omega^2 \leq 0$ for some K\"ahler form $\omega,$ we completely 
describe the structure of $D$ and prove that $K_X \equiv 0.$ 
A disadvantage of this approach is that we still have to assume the existence of a global section
for some $m K_X$.

One way to construct holomorphic objects is via the hard Lefschetz theorem. Let $X$ again be a fourfold with $\tilde X \simeq \C^4$
and assume $K_X \cdot c_2(X) \ne 0.$ Then by Riemann-Roch, some $h^q(X,mK_X)$ grows at least quadratically. If now $K_X$ is
hermitian semi-positive, or more generally, if $K_X$ has a possibly singular metric with positive curvature current such 
that the multiplier ideal sheaves $\sJ(h^m)$ are trivial for large $m$, then also $h^0(X,\Omega^q_X \otimes mK_X)$ grows
quadratically, so that $X$ has a non-constant meromorphic function. 
Hence $X$ is projective, and then we prove in Corollary  \ref{corollarymultiplier} that $\kappa (X) \geq 0$. In total we may state

\begin{theorem} Let $X$ be a compact K\"ahler fourfold with universal covering $\tilde X \simeq \C^4.$ If $K_X$ is hermitian
semi-positive and if $K_X^2 \cdot c_2(X) \ne 0,$ then $\kappa (X) = 0.$ 
\end{theorem}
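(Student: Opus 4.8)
The plan is to carry out the Riemann--Roch computation indicated in the introduction, combine it with the hard Lefschetz theorem for hermitian semi-positive line bundles to produce many twisted holomorphic forms, use these to exhibit a non-constant meromorphic function on $X$, and conclude with Theorem \ref{maintheorem} and Corollary \ref{corollarymultiplier}. I would begin by fixing the numerical data. The covering map $\C^4\to X$ is a local biholomorphism, hence nondegenerate, so by Kobayashi--Ochiai \cite{KO75} the manifold $X$ is not of general type; since $K_X$ is hermitian semi-positive, hence nef but not big, this forces $K_X^4=0$. As $\tilde X\simeq\C^4$ is Stein, $X$ has generically large fundamental group, and since $X$ is not of general type we obtain $\chi(X,\sO_X)=0$. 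Substituting $K_X^4=0$ and $\chi(X,\sO_X)=0$ into Hirzebruch--Riemann--Roch on the fourfold $X$ kills every contribution but one, leaving
\[
\chi(X,mK_X)=\frac{m(m-1)}{24}\,\bigl(K_X^2\cdot c_2(X)\bigr),
\]
which by the hypothesis $K_X^2\cdot c_2(X)\neq 0$ is a nonzero polynomial of degree $2$ in $m$.

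Now I would draw the cohomological consequence. Since $K_X$ is nef and numerically nontrivial (again because $K_X^2\cdot c_2(X)\neq 0$), no negative multiple of $K_X$ is effective, so Serre duality gives $h^4(X,mK_X)=h^0(X,-(m-1)K_X)^\vee=0$ for $m\geq 2$; hence $|\chi(X,mK_X)|\leq h^0+h^1+h^2+h^3$ and some $h^q(X,mK_X)$ grows at least quadratically, for a fixed index $q\in\{0,1,2,3\}$ and infinitely many $m$. As $(m-1)K_X$ is again hermitian semi-positive, the hard Lefschetz theorem for semi-positive line bundles provides a surjection
\[
\omega^q\wedge(\cdot):\ H^0\bigl(X,\Omega^{4-q}_X\otimes(m-1)K_X\bigr)\ \twoheadrightarrow\ H^q\bigl(X,K_X\otimes(m-1)K_X\bigr)=H^q(X,mK_X),
\]
so $h^0\bigl(X,\Omega^{p}_X\otimes(m-1)K_X\bigr)$ grows at least quadratically for some fixed $p\in\{1,2,3,4\}$, the case $p=4$ reading as $h^0(X,mK_X)\gg m^2$, i.e.\ $\kappa(X)\geq 2$. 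The decisive step is then to convert this into a non-constant meromorphic function on $X$. When $p=4$ this is immediate. When $p\in\{1,2,3\}$ one uses the wedge pairing $\Lambda^{p}\Omega^1_X\otimes\Lambda^{4-p}\Omega^1_X\to K_X$ and its iterates, applied to the abundance of sections of $\Omega^p_X\otimes(m-1)K_X$, to manufacture sections of pluricanonical bundles $NK_X$; what must be shown is that one does not merely reproduce a single section up to scalar, for otherwise all these twisted forms would be subordinate to one distribution of rank $<4$ on $X$, and its associated foliation --- analysed by Brunella's theorem, as elsewhere in the paper --- would be incompatible with the absence of rational curves on $X$ forced by the Steinness of $\tilde X$. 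I expect precisely this last deduction, extracting an honest pluricanonical section (equivalently a non-constant meromorphic function) from the twisted $p$-forms when $p\leq 3$, to be the main obstacle, since it is the one step genuinely about the geometry of holomorphic forms and foliations rather than bookkeeping.

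Finally, once $X$ carries a non-constant meromorphic function I would look at the algebraic reduction $X\dashrightarrow Y$. If $\dim Y<4$, then $X$ is not projective, the general fibre is a positive-dimensional compact subvariety, and such fibres cover $X$, so Theorem \ref{maintheorem}(2) applies. If $\dim Y=4$, then $X$ is Moishezon, hence projective since it is K\"ahler; Corollary \ref{corollarymultiplier} then yields $\kappa(X)\geq 0$, and Theorem \ref{maintheorem}(1) applies. In either case $X$ is a torus up to finite \'etale cover, and in particular $\kappa(X)=0$, which is the assertion. (A posteriori $c_2(X)=0$ for such $X$, so the hypotheses turn out to be vacuous: the statement serves as a waypoint rather than an assertion about a nonempty class of fourfolds.)
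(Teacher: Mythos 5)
Your numerical setup is exactly the paper's: $K_X^4=0$ from Kobayashi--Ochiai plus nefness, Riemann--Roch giving $\chi(X,mK_X)=\frac{m(m-1)}{24}K_X^2\cdot c_2(X)$, vanishing of $h^4$ (and of $h^3$ via \cite{DP03} in the K\"ahler case), and the hard Lefschetz surjection of \cite[Thm.2.1.1]{DPS01} with trivial multiplier ideals to produce $h^0(X,\Omega^{4-q}_X\otimes (m-1)K_X)\geq Cm^2$ for some fixed $q$. This is precisely the content of Corollaries \ref{corollarymultiplier} and \ref{corollaryfourf}, and your endgame (algebraic reduction plus Theorem \ref{maintheorem}) matches how the paper passes from $\kappa(X)\geq 0$ to $\kappa(X)=0$.

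The genuine gap is the step you yourself flag as ``the main obstacle'': ruling out quadratic growth of $h^0(X,\Omega^p_X\otimes mK_X)$ for $1\leq p\leq 3$ when $\kappa(X)=-\infty$. Neither of your suggested fixes works. The pairing $\Lambda^{p}\Omega^1_X\otimes\Lambda^{4-p}\Omega^1_X\to K_X$ needs sections of twisted forms in complementary degrees, and you only have them in one degree; and the Brunella fallback is not available, since the span of the twisted $p$-forms is a coherent subsheaf of $\Omega^p_X\otimes mK_X$ rather than an integrable rank-one distribution, so the theorem on foliations by curves does not apply. The paper's Theorem \ref{theoremcohomology} fills exactly this hole, and the input is not elementary: one takes the subsheaf $\sS_m\subset\Omega^q_X(mK_X)$ generated by global sections (full rank would give $\kappa(X)\geq 0$ directly), shows its determinant $\sL:=\sL_{m_0}$ has $h^0\geq 2$, and then invokes \cite{CP11} --- pseudo-effectivity of $\det Q$ for the quotient $Q$, whence $NK_X=\sL+\det Q$ and $\kappa(\sL)\leq 2$, together with the fact that the general fibre of the map defined by $|\sL|$ is not of general type --- so that Lemma \ref{lemmainduction} yields a torus up to finite \'etale cover, contradicting $\kappa(X)=-\infty$. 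Without this input, or a substitute for it, the argument does not close; note also that your appeal to Corollary \ref{corollarymultiplier} in the projective branch of the endgame silently reuses the very statement (Theorem \ref{theoremcohomology}) whose proof contains the missing step, and that merely exhibiting a non-constant meromorphic function would not settle the case $a(X)=4$ anyway, since there you still need $\kappa(X)\geq 0$ rather than just $a(X)\geq 1$.
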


\begin{corollary} Let $X$ be a smooth projective fourfold with universal covering $\tilde X \simeq \C^4.$ If $K_X$ is
hermitian semi-positive, then $K_X^2 \cdot c_2(X) = 0$ and therefore $\chi(X, mK_X) = 0$ for all integers $m.$  
\end{corollary}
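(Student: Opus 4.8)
The plan is to read the Corollary off from the preceding Theorem together with part~(a) of Theorem~\ref{maintheorem}, and then to extract the vanishing of the Euler characteristics by a short Hirzebruch--Riemann--Roch computation combined with Serre duality. First I would prove $K_X^2\cdot c_2(X)=0$ by contradiction. Suppose $K_X^2\cdot c_2(X)\neq 0$. As $X$ is a projective fourfold with $\tilde X\simeq\C^4$ and $K_X$ is hermitian semi-positive, the preceding Theorem yields $\kappa(X)=0$; in particular $\kappa(X)\geq 0$, so by Theorem~\ref{maintheorem}(a) there is a finite \'etale cover $\pi\colon A\to X$ with $A$ a complex torus. On $A$ the tangent bundle is trivial, whence $c_2(A)=0$; since $\pi^\ast c_2(X)=c_2(A)$ and $\pi^\ast$ is injective on $H^\ast(-;\Q)$ (because $\pi_\ast\pi^\ast$ is multiplication by $\deg\pi$), it follows that $c_2(X)=0$ numerically and therefore $K_X^2\cdot c_2(X)=0$, contradicting the assumption. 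Hence $K_X^2\cdot c_2(X)=0$.

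Next I would record two auxiliary vanishings. Because $\tilde X$ is Stein, $X$ carries no rational curve, so $K_X$ is nef (as recalled in the introduction); and since by Kobayashi--Ochiai \cite{KO75} there is no nondegenerate map from $\C^4$ onto a variety of general type, $X$ is not of general type. A nef line bundle with $K_X^4>0$ would be big, forcing $X$ to be of general type, so $K_X^4=0$. Moreover, as $\tilde X\simeq\C^4$ is Stein, no positive-dimensional subvariety of $X$ admits a finite cover lifting to a compact subvariety of $\C^4$, so $X$ has (generically) large fundamental group; since $X$ is not of general type, the result recalled in the introduction gives $\chi(X,\sO_X)=0$.

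Finally, set $P(m)=\chi(X,mK_X)$, a polynomial in $m$ of degree at most $4$. Serre duality gives $P(m)=\chi(X,K_X-mK_X)=P(1-m)$, so $P$ is symmetric about $m=\tfrac12$; its leading coefficient $\tfrac1{24}K_X^4$ is zero by the previous paragraph, hence $\deg P\leq 2$ and $P(m)=\alpha\,(m-\tfrac12)^2+\beta$ for some constants $\alpha,\beta$. By Hirzebruch--Riemann--Roch the coefficient of $m^2$ in $P$ equals $\tfrac12\,K_X^2\cdot\mathrm{td}_2(X)=\tfrac1{24}\bigl(K_X^4+K_X^2\cdot c_2(X)\bigr)$, which vanishes by the first two paragraphs, so $\alpha=0$; and $\beta=P(0)=\chi(X,\sO_X)=0$. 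Thus $P\equiv 0$, i.e.\ $\chi(X,mK_X)=0$ for every $m\in\Z$.

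I do not anticipate a real obstacle: all the weight of the argument rests on the preceding Theorem and on the projective case Theorem~\ref{maintheorem}(a), both of which are available, and what remains is formal. The only point needing a little care is the contradiction step: one must check that the hypotheses of the two invoked theorems are genuinely met ($X$ projective, $\tilde X\simeq\C^4$, $\kappa(X)\geq 0$) and that the relevant statement ``$c_2(X)=0$'' is the \emph{numerical} one, which is what is stable under passing to a finite \'etale cover.
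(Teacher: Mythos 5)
Your proof is correct and takes essentially the same route as the paper: the identity $K_X^2\cdot c_2(X)=0$ is obtained exactly as intended, by playing the preceding Theorem (i.e.\ Corollary~\ref{corollaryfourf}, which gives $\kappa(X)\geq 0$ when $K_X^2\cdot c_2(X)\neq 0$) against Corollary~\ref{corollaryfoura} and the vanishing of $c_2$ on a finite \'etale torus cover. The Riemann--Roch/Serre-duality computation you add for $\chi(X,mK_X)=0$, resting on $K_X^4=0$ (nef, not big by Kobayashi--Ochiai), $K_X^2\cdot c_2(X)=0$, and Campana's $\chi(X,\sO_X)=0$ for generically large fundamental group, is precisely the content the paper's ``therefore'' leaves implicit.
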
 

For the proofs we refer to Corollary \ref{corollarymultiplier} and Corollary \ref{corollaryfourf}. 

{\bf Acknowledgements.} 
We thank the Forschergruppe 790 ``Classification of algebraic surfaces and compact complex manifolds'' of the Deutsche
Forschungsgemeinschaft for
financial support. 
A.H\"oring was partially also supported by the A.N.R. project ``CLASS''.

\begin{center}
{\bf Notation}
\end{center}

Let $X$ be a  compact K\"ahler manifold, and $\tilde X$ its universal cover. 
We say, following \cite{Kol93}, that $X$ has large fundamental group if for every positive-dimensional 
subvariety $Z \subset X$,
with normalisation $\bar Z \rightarrow Z$, 
the morphism $\pi_1(\bar Z) \rightarrow \pi_1(X)$ has infinite image. In particular $\tilde X$
has no compact subvarieties. The fundamental group is generically large if the preceding
property holds for every subvariety passing through a very general point.

Given a compact K\"ahler manifold $X$ we denote by $\kappa(X)$ its Kodaira dimension,
$a(X)$ its algebraic dimension, $q(X):=h^1(X, \sO_X)$ its irregularity and by 
\[
\tilde q(X) := \mbox{max} \{ \
q(X') \ | \ X' \rightarrow X \ \mbox{finite \'etale}
\}.
\]
We say that $X$ is simple if there is no positive-dimensional subvariety through a
very general point $x \in X$; this is equivalent to supposing that $X$ has no covering
family of positive-dimensional subvarieties. \\
The structure of simple K\"ahler manifolds is quite mysterious, even in dimension 3. It is expected
that a simple K\"ahler threefold is bimeromorphic to a quotient $T/G$ of a torus $T$ by a finite group $G$.
In higher dimensions, basically the only known example - up to quotients and bimeromorphic transformations - 
are general tori and ``general'' Hyperk\"ahler manifolds.

\section{The projective case}

The aim of this section is to show that for projective manifolds Conjecture $I_n$
is a consequence of the following non-vanishing conjecture

\begin{conjecture} (Non-vanishing for klt pairs) \label{conjecturenonvanishing}
Let $X$ be a $n$-dimensional normal projective variety and $\Delta$ an effective $\Q$-divisor
such that $(X, \Delta)$ is klt and $K_X+\Delta$ is nef. Then $\kappa(K_X+\Delta) \geq 0$.
\end{conjecture}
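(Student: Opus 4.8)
The plan is to follow the architecture of the minimal model program and argue by induction on $n = \dim X$, stratifying according to the numerical dimension $\nu := \nu(K_X + \Delta)$ of the nef class $K_X + \Delta$. First I would reduce to a workable birational model: after a dlt modification followed by a terminalisation one may assume $X$ is $\Q$-factorial with at worst terminal singularities, since both nefness of $K_X+\Delta$ and the invariant $\kappa(K_X+\Delta)$ are preserved under crepant pullbacks, and a log resolution permits passage to the smooth case whenever analytic positivity tools are required. The three ranges $\nu = 0$, $0 < \nu < n$, and $\nu = n$ would then be handled separately.

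The two extreme cases are the accessible ones. When $\nu = n$ the class $K_X + \Delta$ is nef and big, so Kawamata--Viehweg vanishing combined with asymptotic Riemann--Roch forces $h^0(X, m(K_X+\Delta))$ to grow like $\frac{(K_X+\Delta)^n}{n!}\,m^n$, giving $\kappa \geq 0$ at once; this is essentially the classical non-vanishing theorem of Shokurov and Kawamata. When $\nu = 0$ the nef class is numerically trivial, and the assertion reduces to the statement that a numerically trivial nef $\Q$-Cartier divisor on a klt pair is $\Q$-linearly trivial, hence effective after a multiple. Here I would invoke the structure theory of klt varieties together with control of $\mathrm{Pic}^0$ on a resolution, via a Beauville--Bogomolov type decomposition, to show that such a divisor is torsion in the relevant sense and therefore has a non-trivial section.

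The heart of the difficulty lies in the intermediate range $0 < \nu < n$. My approach would be to form the nef reduction (the Bauer--Campana--Tsuji fibration) $f \colon X \dashrightarrow Y$ associated to $K_X + \Delta$, whose general fibre $F$ satisfies $\nu((K_X+\Delta)|_F) = 0$, so that non-vanishing holds along fibres by the previous step. The aim is then to descend to the base: using the canonical bundle formula of Kawamata and Fujino--Mori one writes $K_X + \Delta \sim_\Q f^*(K_Y + \Delta_Y + M_Y)$, with $\Delta_Y$ the discriminant part and $M_Y$ the moduli part, and one seeks to realise $(Y, \Delta_Y + M_Y)$ as a klt pair of dimension $< n$ with nef log-canonical class, to which the inductive hypothesis applies. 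The positivity of the direct images $f_*\omega_{X/Y}^{\otimes m}$ and the Hodge-theoretic semi-positivity of $M_Y$ are the essential inputs keeping $K_Y + \Delta_Y + M_Y$ inside the nef, klt range.

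The main obstacle is precisely this descent step. Controlling the moduli part $M_Y$ sharply enough --- ensuring that it is $\Q$-Cartier and that $(Y, \Delta_Y + M_Y)$ is genuinely klt with nef log-canonical class, rather than merely numerically so --- is where the argument currently breaks down, and is why the conjecture remains open in general: numerical triviality of $(K_X+\Delta)|_F$ by itself does not guarantee that $f$ is sufficiently algebraic, nor that the induced boundary and moduli $\Q$-divisors on $Y$ behave well under the inductive machinery. This intermediate-$\nu$ descent, together with the delicate torsion statement in the $\nu = 0$ case, is where the genuine content of the conjecture is concentrated.
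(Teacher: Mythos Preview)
The statement you are attempting to prove is not a theorem in the paper: it is Conjecture~\ref{conjecturenonvanishing}, the non-vanishing conjecture for klt pairs, which the authors explicitly state as an \emph{open} conjecture and then \emph{assume} (in dimension $n-1$) as a hypothesis in Theorems~\ref{theoremprojectivecase} and~\ref{theoremsemiample}. There is therefore no proof in the paper to compare your proposal against; the paper only records that the conjecture is known in dimension at most three via \cite{Kol92}.

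Your own write-up is internally consistent with this: you correctly identify the intermediate numerical dimension range $0<\nu<n$ as the place where the argument ``currently breaks down'' and explain that this is ``why the conjecture remains open in general''. In other words, what you have written is not a proof but a survey of the standard strategy (stratification by $\nu$, nef reduction, canonical bundle formula, induction) together with an honest account of where it fails. That is a reasonable summary of the state of the art, but it should not be labelled a proof proposal.

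Two technical remarks on the sketch itself. First, in the $\nu=0$ case your reduction is slightly misstated: a numerically trivial $\Q$-Cartier divisor is certainly not $\Q$-linearly trivial in general, and Beauville--Bogomolov does not apply to an arbitrary klt pair. The correct statement---that $K_X+\Delta\equiv 0$ implies $K_X+\Delta\sim_{\Q}0$ for klt pairs---is a genuine theorem (Nakayama, Campana--Koziarz--P\u{a}un, Kawamata), but it requires specific arguments about variation of Hodge structure or positivity of direct images, not the decomposition theorem. Second, even granting the $\nu=0$ case, the descent via the nef reduction in the intermediate range is not merely a matter of controlling the moduli part $M_Y$: the nef reduction map is in general only almost holomorphic, and making the canonical bundle formula interact correctly with the inductive hypothesis is exactly the unresolved core of the problem, as you acknowledge.
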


Then we may state

\begin{theorem} \label{theoremprojectivecase}
Let $X$ be a projective manifold of dimension $n$ such that the universal cover is $\C^n$.
Suppose that $\kappa(X) \geq 0$ and the non-vanishing Conjecture \ref{conjecturenonvanishing}
holds in dimension $n-1$. Then $X$ is an \'etale quotient of a torus.
\end{theorem}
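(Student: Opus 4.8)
The plan is to run the minimal model program on $X$ and show that the abundance-type statement we need follows from the non-vanishing conjecture in dimension $n-1$, together with the fact that a manifold whose universal cover is $\C^n$ carries no rational curves. First I would observe, exactly as in the introduction, that $K_X$ is nef: if it were not, the cone theorem would produce a rational curve on $X$, which is impossible because $\tilde X \simeq \C^n$ is Stein and hence contains no compact curves. So the situation is: $X$ smooth projective, $K_X$ nef, $\kappa(X)\geq 0$. The goal reduces to showing $K_X$ is semi-ample; once we have a holomorphic Iitaka fibration $f\colon X\to Y$, the argument sketched in the introduction (Koll\'ar's theorem \cite[6.3]{Kol93} on varieties with generically large fundamental group, which applies since $\tilde X$ has no compact subvarieties, plus Kobayashi--Ochiai \cite{KO75} to kill the base $Y$) forces $X$ to be an \'etale quotient of a torus.

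The heart of the matter is therefore the implication ``$K_X$ nef, $\kappa(X)\geq 0$, no rational curves, non-vanishing in dimension $n-1$ $\Longrightarrow$ $K_X$ semi-ample''. This is precisely the intermediate Theorem stated in the excerpt (``Let $X$ be a projective manifold of dimension $n$ without rational curves\ldots then $K_X$ is semi-ample''), so I would simply invoke it. To prove that intermediate statement I would proceed as follows. Since $\kappa(X)\geq 0$ and $K_X$ is nef, pick $m$ with $h^0(X,mK_X)>0$ and a divisor $D\in |mK_X|$. The strategy is to reduce the dimension by restricting to (a log resolution of) a component of $D$: on such a component $S$, adjunction gives a klt pair $(S,\Delta)$ with $K_S+\Delta$ nef of dimension $<n$, one applies non-vanishing in dimension $n-1$ to get effectivity there, and then one lifts sections back to $X$ using the extension theorem of Demailly--Hacon--P\u{a}un \cite{DHP10} (this is the ``recent extension result'' the excerpt advertises). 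Running this inductively, or packaging it through the DHP machinery directly, upgrades $\kappa(X)\geq 0$ to the statement that the section ring is finitely generated and base-point free in the limit, i.e. $K_X$ semi-ample. The absence of rational curves enters to guarantee that the relevant negative parts / extremal contractions that would normally obstruct extension do not occur — in effect $X$ already sits ``at the end'' of the MMP.

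I would then assemble the pieces: $K_X$ semi-ample yields $f\colon X\to Y$ with $K_X = f^*A$ for an ample $\Q$-divisor $A$ on the normal projective base $Y$, $\dim Y = \kappa(X)$. The fibres $F$ satisfy $K_F = 0$, and because $X$ has generically large fundamental group (every subvariety through a very general point injects up to finite index into $\pi_1(X)$, as $\tilde X=\C^n$), Koll\'ar's structure theorem says that after a finite \'etale base change $X\to Y$ becomes a smooth abelian group scheme with $Y$ of general type. A general type $Y$ of positive dimension would receive a dominant-with-dense-image holomorphic map from $\C^n$ (via $\tilde X\to X\to Y$), contradicting Kobayashi--Ochiai; hence $\dim Y = 0$, so $K_X\equiv 0$ and $X$ is, after finite \'etale cover, a manifold with trivial canonical bundle whose universal cover is $\C^n$ — by Beauville--Bogomolov, such a manifold is a torus. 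That gives the conclusion.

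The main obstacle is clearly the semi-ampleness step, i.e. making the Demailly--Hacon--P\u{a}un extension theorem do its job in this exact setting: one must arrange the restriction to a divisor $D\in|mK_X|$ so that the hypotheses of \cite{DHP10} are met (the right positivity of $mK_X|_D$ minus the different, compatibility of multiplier ideals, and control of the restricted sections), and one must feed in non-vanishing in dimension $n-1$ at precisely the right point of the induction. The hypothesis ``no rational curves'' is what makes this tractable, since it removes the need for a full MMP with flips on $X$ itself; but verifying that the extension-theorem bookkeeping closes up cleanly — rather than merely giving effectivity of some multiple — is the delicate part, and it is where the genuine input of this section lies.
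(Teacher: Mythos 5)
Your outer shell matches the paper exactly: $K_X$ is nef by the cone theorem, the theorem reduces to the semi-ampleness statement (Theorem \ref{theoremsemiample}), and the conclusion then follows from Nakayama's results (the paper simply cites \cite{Nak99}; your Koll\'ar/Kobayashi--Ochiai/Beauville--Bogomolov assembly is the same argument as in the introduction). The problem is that the entire mathematical content lives in the semi-ampleness step, and there your description does not reflect how the argument can actually be made to work; it is not just ``delicate bookkeeping'' left to the reader. First, the paper does not lift sections from $S$ to prove semi-ampleness directly. It reduces to the case $\kappa=0$ (for $\kappa\geq 1$ one passes to a general fibre $F$ of the Iitaka fibration, which has $\kappa(F)=0$, and then invokes Lai's theorems \cite[Thm.0.2, Prop.2.4]{Lai10} on varieties fibered by good minimal models to go back up --- an input your ``running this inductively'' does not supply), and in the $\kappa=0$ case it argues by \emph{contradiction}: if $K_{X_{\min}}\not\sim_\Q 0$, then non-vanishing in dimension $n-1$ plus \cite[Cor.1.8]{DHP10} forces $\kappa(X',S+\Delta)\geq 1$, contradicting $\kappa=0$. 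So DHP is used to rule out the bad case, not to build a base-point-free linear system.

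Second, two hypotheses you take for granted require real work. (a) You need the component $S$ of $D\in|mK_X|$ to be non-uniruled, so that it survives the MMP; the paper produces such a component by applying the negativity lemma to the birational morphism $X_{\min}\to A$ onto a variety without rational curves. (b) The non-vanishing conjecture applies to $(S,\Delta_S)$ only once $K_{X'}+S+\Delta$ (hence $K_S+\Delta_S$) is nef, and this is \emph{not} automatic from nefness of $K_X$: after the log resolution the pair $(X',S+\Delta)$ must be run through an MMP, whose termination is unknown in general in dimension $n$. The paper's Lemma \ref{lemmatermination} obtains termination by special termination (\cite{Fuj07}) along $S$, using precisely that $S\to\mu(S)$ is birational and the target has no rational curves. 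Without these two steps the ``restrict, apply non-vanishing, extend'' loop you describe does not close, so as written the proposal has a genuine gap at the heart of the proof.
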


This follows immediately from the following more general Theorem \ref{theoremsemiample} and \cite{Nak99}.

\begin{theorem} \label{theoremsemiample}
Let $X$ be a projective manifold of dimension $n$ without rational curves.
Suppose that $\kappa(X) \geq 0$ and the Conjecture \ref{conjecturenonvanishing}
holds in dimension $n-1$. Then $K_X$ is semiample.
\end{theorem}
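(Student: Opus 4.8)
The plan is to combine the basic dichotomy for manifolds without rational curves with an induction on dimension governed by the non-vanishing conjecture, using the recent extension theorem of Demailly--Hacon--P\u aun \cite{DHP10} as the engine that turns "nef" plus "$\kappa\ge0$" into "semiample". First, since $X$ contains no rational curves, the cone theorem implies $K_X$ is nef: any $K_X$-negative extremal ray would produce a rational curve. Pick a divisor $D\in|mK_X|$ for some $m>0$ (possible as $\kappa(X)\ge0$) and run an MMP, or rather apply the abundance-type machinery: the real content is to show $K_X$ is semiample, equivalently that the numerical dimension $\nu(K_X)$ equals $\kappa(K_X)$ and the linear system is eventually base-point free.

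The key step is to reduce to a lower-dimensional pair. Take an irreducible component $S$ of $\supp D$ (so $S$ is a prime divisor with $K_X\cdot S$ lying in the boundary of the nef cone restricted to $S$), or more precisely pass to a log-resolution so that $mK_X\sim \sum a_i D_i$ with simple normal crossing support. On a well-chosen component $S=D_i$, adjunction gives $(K_X+S)|_S=K_S+\mathrm{Diff}_S$, and the other components together with the klt/lc structure produce an effective $\Q$-divisor $\Delta_S$ on $S$ with $(S,\Delta_S)$ klt and $(K_X)|_S \equiv$ (up to the adjunction correction) a nef class of the form $K_S+\Delta_S$. Since $\dim S=n-1$, the non-vanishing conjecture in dimension $n-1$ yields $\kappa(K_S+\Delta_S)\ge0$, hence an effective divisor in the restricted system. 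One then feeds this back: the Demailly--Hacon--P\u aun extension theorem allows one to lift sections of (a multiple of) $(K_X+S)|_S$ to $X$, provided the relevant positivity and the non-vanishing of the restricted linear series are in place. Iterating — either by extending from $S$ and peeling off components, or by an inductive argument on the number of components of $D$ — one obtains that $|mK_X|$ is non-empty "enough" to conclude base-point freeness, i.e. $K_X$ semiample. (Equivalently: the argument shows the restricted MMP terminates with a good minimal model, and since $K_X$ is already nef the minimal model is $X$ itself with $K_X$ semiample.)

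The main obstacle will be the extension step: to apply \cite{DHP10} one needs the restriction of the relevant adjoint system to $S$ to be non-empty \emph{and} to control the multiplier ideal / singularities of the metric along $S$ so that the extension is surjective onto the sections one wants; this is exactly where the delicate hypotheses of the Demailly--Hacon--P\u aun theorem (a klt pair, $S$ not in the stable base locus, the right twist by a pseudoeffective class with appropriate singularities) must be verified in the present geometric situation. A secondary difficulty is bookkeeping when $D$ has several components with different coefficients: one must choose the order of extraction/extension so that at each stage the pair stays klt and the nef class stays of adjoint type $K+\Delta$ in dimension $n-1$, so that the inductive hypothesis genuinely applies. Once semiampleness of $K_X$ is established, Theorem \ref{theoremprojectivecase} follows by invoking \cite{Nak99}: the Iitaka fibration is then a morphism, Koll\'ar's structure theorem \cite[6.3]{Kol93} identifies $X$ (up to \'etale cover) as an abelian scheme over a base of general type, and Kobayashi--Ochiai \cite{KO75} forces the base to be a point.
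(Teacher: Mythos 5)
Your overall strategy---restrict to a component $S$ of a pluricanonical divisor, produce a klt pair $(S,\Delta_S)$ of adjoint type in dimension $n-1$, invoke non-vanishing there, and feed the result back via \cite{DHP10}---is indeed the germ of the paper's argument, but as written it has several genuine gaps, and the actual logical architecture of the proof is different from what you describe. First, after passing to a log resolution the class $K_{X'}+S+\Delta$ is \emph{not} nef on $S$ (it differs from $\mu^*K_X$ by exceptional divisors with the wrong signs), so you cannot apply the non-vanishing conjecture to $(S,\Delta_S)$ directly. The paper must first run an MMP for the pair $(X',S+\Delta)$ to make $K_{X'}+S+\Delta$ nef; proving that this MMP terminates is exactly the content of Lemma \ref{lemmatermination}, a special-termination argument combining \cite{BCHM10} with the fact that $S$ maps birationally to a variety without rational curves, so that the induced MMP on $(S,\Delta_S)$ terminates and the subsequent steps are disjoint from $S$. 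Second, you need the chosen component $S$ to survive the MMP: the paper arranges this by using the negativity lemma on $X_{\min}\to A$ to find a component of $D_{\min}$ that is birational onto its image, hence not uniruled, hence never contracted. Your proposal gives no criterion for selecting $S$ and no reason it is not contracted.

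Third, the way \cite{DHP10} enters is not as an iterated section-lifting scheme producing base-point freeness; it is the single implication \cite[Cor.1.8]{DHP10} that $\kappa(S,\Delta_S)\ge 0$ forces $\kappa(X',S+\Delta)\ge 1$. This is used to derive a \emph{contradiction} in the case $\kappa(X)=0$ (since $\kappa(X',S+\Delta)\le\kappa(X)=0$), and the conclusion is that a minimal model of $X$ has $K\sim_{\Q}0$, i.e.\ $X$ has a good minimal model (Lemma \ref{lemmaminimalmodel}). Semiampleness of $K_X$ itself is then obtained not by extension but by transferring goodness back along birational maps and along the Iitaka fibration using \cite[Thm.0.2, Prop.2.4]{Lai10}: when $\kappa(X)=0$ one passes from the good minimal model to $X$; when $\kappa(X)\ge 1$ one applies the $\kappa=0$ case to a general fibre $F$ of the Iitaka fibration (whose image contains no rational curves) and invokes Lai's theorem on varieties fibered by good minimal models. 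Your proposal omits the case $\kappa(X)\ge 1$ entirely and replaces the final step by an unexecuted ``iterate the extension theorem until $|mK_X|$ is base-point free,'' which is not something \cite{DHP10} delivers. These are the points you would need to supply to turn the sketch into a proof.
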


Since the non-vanishing conjecture holds in dimension at most three, cf. \cite{Kol92},
we deduce from Theorem \ref{theoremprojectivecase} 

\begin{corollary} \label{corollaryfoura}
Let $X$ be a projective fourfold such that $\kappa(X) \geq 0$.
Then $I_4$ holds for $X$.
\end{corollary}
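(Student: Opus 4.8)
The plan is to obtain Corollary \ref{corollaryfoura} as the case $n = 4$ of Theorem \ref{theoremprojectivecase}. First I would unwind the statement: ``$I_4$ holds for $X$'' means that if, in addition to the hypotheses listed, the universal cover $\tilde X$ is biholomorphic to $\C^4$, then $X$ is a torus up to finite \'etale cover. So I assume $\tilde X \simeq \C^4$ and aim to show that $X$ is an \'etale quotient of a torus; by \cite{Nak99} this is the same as exhibiting a torus among the finite \'etale covers of $X$.

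Next I would check the two inputs required to apply Theorem \ref{theoremprojectivecase} in dimension $n = 4$. One of them, $\kappa(X) \geq 0$, is assumed. The other is that the non-vanishing Conjecture \ref{conjecturenonvanishing} holds in dimension $n - 1 = 3$; this is known, since for a klt pair $(Y,\Delta)$ of dimension three with $K_Y + \Delta$ nef, abundance for threefolds gives that $K_Y + \Delta$ is semiample, in particular $\kappa(K_Y+\Delta) \geq 0$ (cf. \cite{Kol92}). Both hypotheses being in force, Theorem \ref{theoremprojectivecase} applies directly and yields that $X$ is an \'etale quotient of a torus, which is exactly the conclusion of $I_4$ for $X$.

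Thus the corollary carries no new content beyond noting that the conditional Theorem \ref{theoremprojectivecase} becomes unconditional for fourfolds. For orientation, the machinery packaged inside that theorem runs as follows: $\tilde X \simeq \C^4$ is Stein, hence $X$ contains no rational curve and $K_X$ is nef by the cone theorem; Theorem \ref{theoremsemiample} then yields that $K_X$ is semiample; the resulting Iitaka morphism $f \colon X \to Z$ is, after a finite \'etale cover and because $X$ has large fundamental group, a smooth abelian group scheme over a base $Z$ of general type by \cite[6.3]{Kol93}; and Kobayashi--Ochiai \cite{KO75} rules out a nondegenerate map from $\C^4$ onto a positive-dimensional variety of general type, forcing $Z$ to be a point, so $K_X \equiv 0$ and \cite{Nak99} completes the identification.

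The only genuine obstacle, and the sole external ingredient this corollary actually relies on, is the availability of the non-vanishing conjecture --- indeed of abundance --- for klt threefold pairs; once Theorems \ref{theoremprojectivecase} and \ref{theoremsemiample} are in hand there is nothing further to do at the level of the fourfold $X$ itself.
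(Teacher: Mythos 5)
Your proposal is correct and coincides with the paper's own deduction: the corollary is exactly Theorem \ref{theoremprojectivecase} in dimension $n=4$, combined with the fact that the non-vanishing Conjecture \ref{conjecturenonvanishing} holds in dimension three by \cite{Kol92}. The additional unwinding of the machinery behind Theorems \ref{theoremprojectivecase} and \ref{theoremsemiample} is accurate but not needed for this step.
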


The proof of Theorem \ref{theoremsemiample} is prepared by two lemmata; here we use freely notations from the
Minimal Model Program, abbreviated as usual by MMP. We refer e.g. to \cite{KM98} for the basic definitions. 

\begin{lemma} \label{lemmatermination}
Let $A$ be a $n$-dimensional normal projective variety without rational curves, and
let $\holom{\mu}{X}{A}$ be a birational morphism from a normal projective $\Q$-factorial variety $X$.
Let $S \subset X$ be an irreducible divisor that is not $\mu$-exceptional
and $\Delta$ an effective $\Q$-divisor on $X$ such that $\lfloor \Delta \rfloor=0$ 
and $(X, S+\Delta)$ is plt. 

Then $(X, S+\Delta)$ has a minimal model.
\end{lemma}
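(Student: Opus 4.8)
The plan is to run a relative minimal model program for $(X,S+\Delta)$ over $A$ and then to upgrade ``nef over $A$'' to ``nef'' using that $A$ contains no rational curves. Since $\mu$ is birational its generic fibre is a point, so every $\Q$-divisor on $X$ is big over $A$ and pseudo-effective over $A$; in particular this holds for $S+\Delta$ and for $K_X+S+\Delta$. As $(X,S+\Delta)$ is $\Q$-factorial and plt, the minimal model theorem of Birkar, Cascini, Hacon and McKernan (in its relative form, extended to dlt pairs --- or, concretely, after replacing $S$ by $(1-\varepsilon)S$ and running an MMP with scaling) produces a minimal model of $(X,S+\Delta)$ over $A$: a birational contraction $\phi\colon X\dashrightarrow X'$ onto a normal $\Q$-factorial variety $X'$, obtained as a finite chain of $(K_X+S+\Delta)$-negative divisorial contractions and flips over $A$, along which all discrepancies are non-decreasing (strictly increasing along the $\phi$-exceptional divisors), such that $K_{X'}+S'+\Delta'$ is nef over $A$, where $S'=\phi_*S$ and $\Delta'=\phi_*\Delta$.

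I would next check that $\phi$ does not contract $S$. A prime divisor contracted by an MMP over $A$ has image of codimension $\geq 2$ in $A$: a divisorial contraction over $A$ sends the contracted divisor into a subset of codimension $\geq 2$ of the next model, and since all the maps to $A$ are birational with $A$ normal, the positive-dimensional fibres of those maps --- hence the images in $A$ of such loci --- lie over a set of codimension $\geq 2$. Thus every $\phi$-contracted divisor is $\mu$-exceptional, whereas $S$ is not; so $S'$ is a prime divisor and $(X',S'+\Delta')$ is again plt with $\lfloor S'+\Delta'\rfloor=S'$.

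It remains to show that $K_{X'}+S'+\Delta'$ is nef, not merely nef over $A$ --- the only point where the hypothesis on $A$ is used. If it were not nef, the cone theorem (for dlt pairs) would give a $(K_{X'}+S'+\Delta')$-negative extremal ray of $\NE{X'}$ spanned by a rational curve $C\subset X'$. Since $K_{X'}+S'+\Delta'$ is nef over $A$, the curve $C$ is not contracted by $X'\to A$, so its image is a positive-dimensional subvariety of $A$ dominated by $\PP^1$, i.e.\ a rational curve in $A$ --- contradicting the hypothesis. Hence $K_{X'}+S'+\Delta'$ is nef and $(X',S'+\Delta')$ is the desired minimal model.

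The substantive step is the termination in the first paragraph: in the dimensions relevant here termination of flips is not available unconditionally, and it is precisely the birationality of $\mu$ --- which forces the ``big over $A$'' hypothesis --- that brings us within the scope of the Birkar--Cascini--Hacon--McKernan theorem. Everything after that is formal MMP bookkeeping together with the single, elementary use of the absence of rational curves in $A$; the only minor care required is the reduction of the plt situation to the form in which that theorem is literally stated.
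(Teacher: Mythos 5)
Your overall strategy---run a relative MMP over $A$, observe that any divisor contracted by such an MMP is $\mu$-exceptional so $S$ survives, and upgrade ``nef over $A$'' to ``nef'' via the cone theorem and the absence of rational curves in $A$---is exactly the framework of the paper's proof, and those parts of your argument are correct. The gap is in your first paragraph, where the existence and termination of the relative MMP for $(X,S+\Delta)$ is presented as a direct application of \cite{BCHM10}. The results you invoke there (Theorem 1.2, Corollary 1.4.2) are stated for \emph{klt} pairs with boundary big over the base; here the pair is only plt, with the reduced component $S$, and handling that reduced component is precisely the content of the lemma. Neither of your two proposed fixes closes this. Replacing $S$ by $(1-\varepsilon)S$ produces a minimal model of a \emph{different} pair: the $(K_X+(1-\varepsilon)S+\Delta)$-MMP and the $(K_X+S+\Delta)$-MMP contract different curves (for instance a curve $C\subset S$ with $S\cdot C<0$ may be negative for one and non-negative for the other), so at the end $K_{X'}+(1-\varepsilon)S'+\Delta'$ is nef over $A$ while $K_{X'}+S'+\Delta'$ need not be; to salvage this one would need a finiteness-of-models argument as $\varepsilon\to 0$, which you do not supply, and even then the discrepancy inequalities only survive the limit in weak form. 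As for ``the minimal model theorem extended to dlt pairs'', that extension is not a citable black box at this level of generality---it is exactly what has to be proved.

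The paper supplies the missing step by special termination in the spirit of \cite{Fuj07}: run the $(K_X+S+\Delta)$-MMP over $A$ with scaling of an ample $H$; by adjunction it induces a $(K_S+\Delta_S)$-MMP with scaling over $\mu(S)$, and since $(S,\Delta_S)$ is klt and $S\to\mu(S)$ is birational (so bigness over the image is automatic), \cite[Cor.1.4.2]{BCHM10} applies to this induced MMP and it terminates. One then shows---using that across a flip the new birational transform $S^+$ is the relative canonical model of $S'\to T$, while $\varphi|_{S'}$ is finite once $K_{S'}+\Delta'_{S'}$ is nef---that every subsequent extremal contraction has exceptional locus disjoint from $S'$; hence the remaining MMP is a klt MMP for $(X',\Delta')$ with $\Delta'$ big over $A$, which terminates by \cite[Cor.1.4.2]{BCHM10} again. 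Some version of this two-step argument (or an explicit reference covering plt pairs with boundary big over the base) is needed to make your first paragraph correct; everything after it in your proposal then goes through.
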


The proof consists combination of a combination of \cite{BCHM10} and special termination arguments \cite[Thm.4.2.1]{Fuj07};
we give full details for the convenience of the reader.

\begin{proof}
Note that since $A$ has no rational curves, it is sufficient to construct a relative minimal model over $A$.
Let now $H$ be a sufficiently ample Cartier divisor such that $(X, S+\Delta+H)$ is plt and 
$K_X+S+\Delta+H$ is nef. We start to run a MMP on $(X, S+\Delta)$ over $A$ with scaling by $H$.
This induces a MMP on $(S, \Delta_S)$ over $\mu(S)$ with scaling by $H|_S$.

Since $(X, S+\Delta)$ is plt, the pair $(S, \Delta_S)$ is klt. Since $S \rightarrow \mu(S)$ is birational the 
MMP with scaling by $H|_S$ terminates after finitely many steps \cite[Cor.1.4.2]{BCHM10}, i.e.
there exists a birational map
$$
\merom{\psi}{X}{X'}
$$
that extracts no divisor such that if we set $S':=\psi_* S$ and $\Delta':=\psi_* \Delta$, then $(X', S'+\Delta')$ is plt and $(S', \Delta'_{S'})$
is a minimal model. We claim that if \holom{\varphi}{X'}{Y} is an extremal contraction
for the pair $(X', S'+\Delta')$, then the exceptional locus is disjoint from $S'$.
Assuming this for the time being, let us see how to conclude: 
since the exceptional loci of the $(X', S'+\Delta')$-MMP are disjoint from $S'$ (this property does not change under the MMP), it is also a MMP for $(X', \Delta')$. Since $X'$ is $\Q$-factorial, the pair
$(X', \Delta')$ is plt without integral boundary divisor, so klt. Thus the MMP with scaling terminates
by \cite[Cor.1.4.2]{BCHM10}.

{\em Proof of the claim.} Set $T$ for the image $\varphi(S')$. We claim that the morphism 
$\holom{\varphi|_{S'}}{S'}{T}$
is finite: if $C$ is contracted by $S' \rightarrow T$, it is contained in $S'$ and contracted by $\varphi$.
Thus one has
$$
(K_{S'}+\Delta'_{S'}) \cdot C = (K_{X'}+S'+\Delta') \cdot C <0,
$$
contradicting the nefness of $K_{S'}+\Delta'_{S'}$.

Assume without loss of generality that the contraction $\varphi$ is small
and denote by \holom{\varphi^+}{X^+}{Y} the flip of $\varphi$. Set $S^+ \subset X^+$ for the strict transform of $S'$.  
We argue by contradiction and 
suppose that the $\varphi$-exceptional locus meets $S'$. Then there exists a curve $C \subset X'$
that is contracted by $\varphi$ and such that $S'\cdot C>0$. Thus for every curve $C^+ \subset X^+$
that is contracted by $\varphi^+$ one has $S^+\cdot C^+<0$, i.e. the exceptional locus of $\varphi^+$ is contained
in $S^+$. In particular the map $\holom{\varphi^+|_{S^+}}{S^+}{T}$ is not finite.
Yet it is a well-known property of the flip that $S^+$ is the relative canonical model of $S' \rightarrow T$.
Since $S' \rightarrow T$ is relatively nef, we have a morphism $g: S' \rightarrow S^+$ such that
$\varphi^+|_{S^+} \circ g = \varphi|_{S'}$ \cite[2.22 Thm.]{Kol92}. Since $\varphi|_{S'}$ is finite, but $\varphi^+|_{S^+}$ is not, 
we have reached a contradiction.
\end{proof}
 
\begin{lemma} \label{lemmaminimalmodel}
Let $A$ be a $n$-dimensional normal projective variety without rational curves,
let $\holom{\mu}{X}{A}$ be a birational morphism from a projective manifold $X$.
Suppose that $\kappa(X)=0$ and that Conjecture \ref{conjecturenonvanishing}
holds in dimension $\leq n-1$. Then $X$ has a good minimal model, i.e.
there exists a birational map \holom{\psi}{X}{X'} such that 
$X'$ is klt and $K_{X'} \sim_\Q 0$.
\end{lemma}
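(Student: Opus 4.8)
The plan is to combine non-vanishing in dimension $n-1$ with the abundance machinery of the Minimal Model Program to produce a good minimal model of $X$. First I would run a $K_X$-MMP over $A$. Since $\kappa(X)=0$, the canonical bundle $K_X$ is pseudo-effective, so by \cite{BCHM10} (we may run a $K_X$-MMP with scaling since $X$ is a projective manifold with $\kappa(X)\geq 0$) we obtain a birational map $\psi: X \dashrightarrow X'$ to a minimal model, where $X'$ is $\Q$-factorial terminal (hence klt) and $K_{X'}$ is nef. Because $A$ has no rational curves, $\mu$ contracts no curve to a point that would force the introduction of a rational curve, so the composed map $X' \dashrightarrow A$ is still birational and we have really produced a relative minimal model over $A$. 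It remains to prove that $K_{X'}$ is semiample (equivalently, since $\kappa(X')=\kappa(X)=0$, that $K_{X'}\sim_\Q 0$).

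The key step is abundance for $X'$, and here I would invoke the non-vanishing hypothesis in dimension $\leq n-1$ via a dimension-reduction argument. Since $\kappa(X')=0$, fix an effective divisor $D\in |mK_{X'}|$ for some $m>0$ with $K_{X'}\equiv \frac{1}{m}D$; the numerical class of $K_{X'}$ is therefore effective, and one wants to upgrade ``$\kappa = 0$ and nef'' to ``$K_{X'}\sim_\Q 0$''. The standard approach is to pass to a dlt pair: take a log resolution, write $K_{X'}\sim_\Q$ an effective divisor, and use the fact that a nef canonical divisor $K_{X'}$ with a section either is numerically trivial or else its positive part meets a covering family of curves contracted by some fibration $f: X' \to Z$ with $\dim Z < n$. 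On the general fibre $F$ of such an $f$ we get $K_F$ nef (by adjunction, since $K_{X'}|_F = K_F$ for a klt minimal model fibred over a lower-dimensional base) and $\kappa(F)\leq 0$; by induction (Iitaka's easy addition and the known MMP in lower dimension, or directly the non-vanishing conjecture applied to the klt pair $(F,0)$) $K_F\sim_\Q 0$. Then a canonical bundle formula of Fujino--Mori / Ambro type expresses $K_{X'}\sim_\Q f^*(K_Z + \Delta_Z + M_Z)$ with $(Z, \Delta_Z)$ klt and $M_Z$ nef; non-vanishing in dimension $\dim Z \leq n-1$ gives $\kappa(K_Z+\Delta_Z+M_Z)\geq 0$, and combined with $\kappa(X')=0$ this forces $K_Z+\Delta_Z+M_Z \sim_\Q 0$, hence $K_{X'}\sim_\Q 0$. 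That $X'$ is klt with $K_{X'}\sim_\Q 0$ is exactly the assertion.

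The main obstacle is the dimension-reduction step: producing the fibration $f$ and the canonical bundle formula with a \emph{klt} (not merely lc) base pair, so that Conjecture \ref{conjecturenonvanishing} in dimension $n-1$ applies as stated. The cleanest route is to first run the MMP to reach $X'$ with $K_{X'}$ nef, then argue: if $K_{X'}$ is not numerically trivial, then $\nu(K_{X'})\geq 1$, and one uses the numerical dimension together with the section $D\in|mK_{X'}|$ to extract $f$ as an Iitaka-type fibration associated to $D$; the subtlety is controlling the singularities of the base and the moduli part $M_Z$, which is where Ambro's and Fujino's work on the canonical bundle formula enters, and where one must check that the base pair stays klt rather than slipping to lc (an issue only in borderline cases that $\kappa(X')=0$ precludes). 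An alternative, perhaps technically lighter, is to apply a relative non-vanishing / abundance result of Lazić--Peternell or the special-termination-plus-induction scheme of \cite{Fuj07} directly, using Lemma \ref{lemmatermination} as the inductive input on divisorial components of $D$; I would present the argument via the canonical bundle formula as above, since it makes the role of Conjecture \ref{conjecturenonvanishing} most transparent.
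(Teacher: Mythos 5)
There is a genuine gap at the heart of your argument: the ``dimension-reduction step'' you flag as the main obstacle is not merely a technical subtlety, it is the whole difficulty, and the fibration $f\colon X'\to Z$ you want does not exist. Since $\kappa(X')=0$, one has $h^0(X',mK_{X'})\leq 1$ for all $m$, so the Iitaka fibration (and any ``Iitaka-type fibration associated to $D$'') contracts $X'$ to a point; there is no map with $1\leq\dim Z\leq n-1$ to which a canonical bundle formula could be applied. Producing such a fibration from a nef divisor with $\nu\geq 1$ and $\kappa=0$ is precisely the open core of the abundance conjecture (the nef reduction is only almost holomorphic and its base may have dimension $n$), so your scheme assumes what is to be proved. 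You also never use the hypothesis that $A$ has no rational curves beyond turning the relative MMP into an absolute one, whereas it plays a second, essential role in the actual argument.

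The paper's proof goes through the divisor $D\in|mK_X|$ itself rather than through a fibration. Arguing by contradiction ($K_{X_{\min}}\not\sim_\Q 0$), the negativity lemma applied to the birational morphism $X_{\min}\to A$ produces an irreducible component $S'$ of $D_{\min}\in|mK_{X_{\min}}|$ that is birational onto its image in $A$, hence \emph{not uniruled} --- this is where the absence of rational curves on $A$ is used. Passing to a log resolution, one forms a plt pair $(X',S+\Delta)$ with $S$ the strict transform of that component and $K_{X'}+S+\Delta\sim_\Q D'$ supported on $S+\Delta$; Lemma \ref{lemmatermination} gives a minimal model of this pair on which $S$ survives (being non-uniruled), adjunction yields a klt minimal pair $(S,\Delta_S)$ of dimension $n-1$, and Conjecture \ref{conjecturenonvanishing} gives $\kappa(S,\Delta_S)\geq 0$. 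The decisive step you are missing is the extension theorem of Demailly--Hacon--P\u{a}un \cite[Cor.1.8]{DHP10}, which lifts these sections to show $\kappa(X',S+\Delta)\geq 1$, contradicting $\kappa(X',S+\Delta)\leq\kappa(X)=0$. Your closing remark about using Lemma \ref{lemmatermination} on divisorial components of $D$ points in the right direction, but without the non-uniruled component (from the negativity lemma) and the extension theorem the argument cannot be closed.
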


\begin{proof}
Since $A$ has no rational curves, a relative minimal model for $X$ over $A$ is a minimal model for $X$ itself. 
Thus we know by \cite[Cor.1.4.2]{BCHM10} that there exists 
a birational map \holom{\psi}{X}{X_{\min}} such that $X_{\min}$ is klt and $K_{X_{\min}}$ is nef.
We argue by contradiction and suppose that $K_{X_{\min}} \not\sim_\Q 0$.
Since $\kappa(X)=\kappa(X_{\min})=0$ there exists an effective $D_{\min} \in |m K_{X_{\min}}|$ for some $m$.
The natural morphism $\mu':X_{\min} \rightarrow A$ is birational, so the negativity lemma (\cite[Lemma 3.6.2]{BCHM10}) implies
that there exists an irreducible component $S' \subset D_{\min}$ that $\mu'|_{S'}$ is birational.
In particular $S'$ is not uniruled.
Thus if $D \in |m K_X|$ is the divisor such that $\psi_* D=D_{\min}$, then $D$ has an irreducible 
component that is not uniruled.

Let $\holom{\mu}{X'}{X}$ be a log-resolution, i.e. the union of the exceptional locus 
and the support of the pull-back $\mu^* D$ form a normal-crossing divisor.
Since $K_{X'} \simeq \mu^* K_X+E$ with $E$ effective, we have
an isomorphism 
$$
K_{X'}+\mu^* D+mE \simeq  (m+1) K_{X'},
$$
in particular $\kappa(X', \mu^* D+mE) = \kappa(X')=\kappa(X)$.
Let now $S$ be the strict transform of an irreducible component of $D$ that is not uniruled,
and set $\Delta:=\varepsilon (\supp (\mu^* D+mE)-S)$ with $0< \varepsilon \ll 1$.
Then the pair $(X', S+\Delta)$ is plt and satisfies
$$
\kappa(X', S+\Delta) \leq \kappa(X', \mu^* D+mE)
$$
Moreover 
$$
K_{X'}+S+\Delta \sim_\Q \mu^* D+E+S+\Delta=:D' 
$$
has the property
$$
S  \subset \supp(D') \subset \supp(S+\Delta).
$$
Since $\mu|_S$ is birational onto its image, we know by Lemma \ref{lemmatermination} 
that $(X', S+\Delta)$ has a minimal model.
Since the divisor $S$ is not uniruled, it is not contracted by any MMP.
Thus up to replacing $(X', S+\Delta)$ by its minimal model (which does not change the preceding
properties) we can suppose without loss of generality that $K_{X'}+S+\Delta$ is nef.
In particular the pair $(S, \Delta_S)$ is klt and a minimal model. Thus by
Conjecture \ref{conjecturenonvanishing} in dimension $n-1$, one has
$$
\kappa(S, \Delta_S) \geq 0.
$$
By \cite[Cor.1.8]{DHP10} this shows that $\kappa(X', S+\Delta) \geq 1$, a contradiction.
\end{proof}

\begin{proof} of Theorem \ref{theoremsemiample}
Since $X$ has no rational curves, it is a minimal model.

If $\kappa(X)=0$ we know by Lemma \ref{lemmaminimalmodel} that $X$ has a good minimal model.
Thus by \cite[Prop.2.4]{Lai10} the minimal model $X$ is also good.

If $\kappa(X) \geq 1$, we denote by \holom{\mu}{X'}{X} a resolution of the indeterminacies 
of some pluricanonical system $|m K_X|$ for $m \gg 0$ sufficiently divisible.
Let $F$ be a general fibre of the Iitaka fibration on $X'$, then $\kappa(F)=0$
and the normalisation of $\mu(F)$ does not contain any rational curves. Thus
$F$ has a good minimal model by Lemma \ref{lemmaminimalmodel}.
By \cite[Thm.0.2, Prop.2.4]{Lai10} this implies that $X$ is a good minimal model.
\end{proof}


\section{A reduction step}

In this section we show that in order to prove Conjecture $I_n$ for
non-algebraic compact K\"ahler manifolds that are not simple, it is sufficient
to prove (generalised versions of) $I_k$ for $k<n$.  

\begin{conjecture} {\rm ($T_n$)} \label{conjecturetn}
Let $X$ be a compact K\"ahler manifold of dimension $n$ with generically large fundamental group. Suppose furthermore one of the following:
\begin{enumerate}
\item There exists a proper modification $\tilde \C^k \rightarrow \C^k$ and a surjective 
map $\tilde \C^k \rightarrow \tilde X$\footnote{We do not suppose that $k=n$, this additional flexibility is useful
for the induction argument in the proofs below.};
\item $\kappa(X) \leq 0$.
\end{enumerate}
Then there exists a finite \'etale cover $X' \rightarrow X$ such that $X'$ is bimeromorphic to a torus.
\end{conjecture}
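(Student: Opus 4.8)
\medskip
\noindent\emph{Sketch of proof (for $X$ not simple).} We argue by induction on $n$, the inductive hypothesis being that $T_k$ holds for all $k<n$; the start of the induction ($n$ small) is Nakayama's theorem \cite{Nak99}. First we dispose of the algebraic case: if $X$ is Moishezon then, being K\"ahler, it is projective, and the conclusion follows from Theorem~\ref{theoremprojectivecase} and Corollary~\ref{corollaryfoura} together with \cite{Nak99}. So assume $a(X)<n$. Since $X$ is not simple it carries a covering family of positive-dimensional proper subvarieties; using this family together with the algebraic reduction one obtains a surjective almost holomorphic map $f:X\dashrightarrow Y$ with $1\le\dim Y\le n-1$ and positive-dimensional general fibre $F$ --- taking $f$ to be the algebraic reduction when $a(X)\ge 1$ (so $Y$ is projective and $a(F)=0$) and the quotient attached to the covering family when $a(X)=0$. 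After replacing $X$ by a suitable modification we may assume $f$ holomorphic; this does not disturb the hypotheses of $T_n$, because in case (1) the universal cover of the modification again admits a surjection from a proper modification of $\C^k$ (pull back $\tilde \C^k\to\tilde X$ and resolve), and in case (2) $\kappa$ is a bimeromorphic invariant.

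Next one checks that $F$ and $Y$ satisfy the hypotheses of $T_{\dim F}$ and $T_{\dim Y}$. Generic largeness of the fundamental group passes to the general fibre and to the base of a fibration (Koll\'ar \cite{Kol93}). For the base, the lift $\tilde X\to\tilde Y$ of $f$ is surjective (the fibres of $f$ are connected), so composing $\tilde \C^k\to\tilde X$ with $\tilde X\to\tilde Y$ exhibits a surjection of a proper modification of $\C^k$ onto $\tilde Y$: thus $Y$ satisfies condition (1) \emph{with the same $k$}, which is exactly the role of the flexibility $k\ne n$ built into Conjecture~\ref{conjecturetn}. For the fibre, when $f$ is the algebraic reduction $a(F)=0$ forces $\kappa(F)\le 0$ (a pencil inside some $|mK_F|$ would produce a non-constant meromorphic function on $F$), so $F$ satisfies condition (2); in the covering-family case one checks, using the choice of family, that $F$ again satisfies condition (1) or (2). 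The inductive hypothesis then gives finite \'etale covers of $Y$ and of the general fibre $F$ that are bimeromorphic to tori.

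It remains to assemble the pieces. Pull back the \'etale cover $Y'\to Y$ to a finite \'etale cover of $X$ and, after one more \'etale base change, reduce to the situation where $f$ has general fibre bimeromorphic to a torus over a base bimeromorphic to a torus. One then shows that, up to a further finite \'etale cover, $\tilde q(X)=n$: the Leray filtration supplies $\dim Y$ from the base and $\dim F$ from the fibres, while the generically large fundamental group of $X$ forces the torus fibration to be, up to \'etale cover and modification, essentially trivial --- a genuinely varying family would, by Koll\'ar's analysis of the fundamental group in fibrations, yield subvarieties through general points with too small a $\pi_1$-image. Once $\tilde q(X)=\dim X$, Nakayama's criterion \cite{Nak99} gives that $X$ is bimeromorphic to a torus, which is the assertion of $T_n$.

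I expect the assembly step to be the main obstacle. The implication ``base and fibre are tori up to \'etale cover'' $\Longrightarrow$ ``$X$ is a torus up to \'etale cover'' is not formal, since a priori $X$ could be a genuinely varying abelian fibration, or a non-split torus extension, over a torus base. Ruling this out is exactly where the generically-large-fundamental-group hypothesis must be exploited, through Koll\'ar's Shafarevich-map machinery and the behaviour of $\pi_1$ in fibrations; turning that principle into the precise equality $\tilde q(X)=n$, while simultaneously controlling the covering-family reduction when $a(X)=0$ (where Campana's theory of special manifolds enters), is the technical heart of the argument.
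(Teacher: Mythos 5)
The statement you are trying to prove is labelled a \emph{conjecture} in the paper, and for good reason: it implies Iitaka's conjecture $I_n$ in all dimensions, and the paper proves it only for $n\le 3$ (via Campana--Zhang) and, in dimension four, only under additional hypotheses. Your proposal cannot close this gap, and indeed it does not claim to: you restrict from the outset to ``$X$ not simple''. What you sketch for that case is essentially the content of the paper's Lemma~\ref{lemmainduction} and Propositions~\ref{propositionmiddlecases} and~\ref{propositionnotsimple}, which are explicitly \emph{reduction} steps, not a proof. The induction cannot get off the ground beyond $n=4$ because the base of the induction at each dimension $k\ge 4$ requires handling precisely the cases your argument excludes: simple manifolds with $a(X)=0$ (where no fibration is available to induct on) and projective manifolds with $\kappa=-\infty$ and $K_X$ nef. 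The paper resolves the simple case only for fourfolds containing no surfaces or divisors (Theorem~\ref{theoremfourd}), by a completely different argument involving flat bundles built from holomorphic $2$- and $3$-forms and Mok's factorisation theorem; nothing in your sketch addresses this.

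Even within the non-simple case there are concrete problems. First, when $a(X)=0$ the quotient $\varphi:X\dashrightarrow Y$ attached to the covering family may have $\dim Y=0$, so your claim that one always obtains a fibration with $1\le\dim Y\le n-1$ is false; in that situation the paper must invoke the stronger conjecture $CZ_k$ for $k\le n-2$ applied to the members $Z_t$ of the family themselves, together with Campana's abelianity theorem, and this is why Proposition~\ref{propositionnotsimple} assumes $CZ_k$ rather than $T_k$. Second, your assertion that the general fibre of the algebraic reduction has $a(F)=0$ is incorrect (the correct and standard fact is $\kappa(F)\le 0$), and in the case $\dim Y>0$, $a(X)=0$ the verification that $F$ is not of general type is not automatic: the paper needs the Campana--Fujiki almost-homogeneity argument. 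Third, your assembly step via ``$\tilde q(X)=n$'' and a Leray filtration is, as you concede, not an argument; the paper instead applies Nakayama's theorem \cite[Thm.8.6]{Nak99b} to replace $f$ by a smooth torus fibration, then uses the exact sequence $0\to\pi_1(F)\to\pi_1(X)\to\pi_1(Y)\to 0$ and Campana's result that an extension of almost abelian groups is almost abelian, from which bimeromorphy to the Albanese torus follows. That route avoids the difficulty you identify, but it still only yields a conditional reduction, not a proof of $T_n$.
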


A more general conjecture has been made by \cite{CZ05}:

\begin{conjecture} {\rm($CZ_n$)} \label{conjectureczn}
Let $X$ be a compact K\"ahler manifold of dimension $n$ 
with generically large fundamental group.
Then $X$ is (up to finite \'etale cover) bimeromorphic to a torus submersion over
a variety of general type.
\end{conjecture}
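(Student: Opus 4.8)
Although Conjecture \ref{conjectureczn} is wide open, we indicate the approach one would naturally pursue. The plan is to induct on $n=\dim X$, organised around a single meromorphic fibration that splits $X$ into a ``torus part'' and a ``general type part''. Concretely we would take Campana's core map $c_X\colon X\dashrightarrow C(X)$, whose general fibre $F$ is special and whose base $C(X)$ is of general type (with the convention that $C(X)$ may be a point, which happens precisely when $X$ is itself special). Three situations then arise: if $\dim C(X)=n$ then $X$ is of general type and $CZ_n$ holds trivially with zero-dimensional fibres; if $0<\dim C(X)<n$ we are in the fibred case and argue by induction; if $\dim C(X)=0$ we are in the simple case, where the real content lies. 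When $X$ is projective one may instead work with the Iitaka fibration: $X$ carries no rational curve through a very general point, since $\pi_1(\PN^1)\to\pi_1(X)$ would then have finite image, contradicting generic largeness; granting the non-vanishing Conjecture \ref{conjecturenonvanishing} in dimensions $\le n$ one gets $\kappa(X)\ge 0$, whence $K_X$ is semiample by Theorem \ref{theoremsemiample}.

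In the fibred case we would first check that a general fibre $F$ again has generically large fundamental group: a subvariety $Z$ of $F$ through a general point of $F$ is a subvariety of $X$ through a general point of $X$, and $\pi_1(\bar Z)\to\pi_1(X)$ factors through $\pi_1(F)$, so infiniteness of the first image forces $\pi_1(\bar Z)\to\pi_1(F)$ to have infinite image as well. Since $\dim F<n$ and $F$ is special, the inductive hypothesis $CZ_{\dim F}$ makes $F$, up to finite \'etale cover, bimeromorphic to a torus submersion over a base of general type; but the core of a special manifold has a point as base, so $F$ is in fact bimeromorphic to a torus up to finite \'etale cover. The remaining, more delicate step is to globalise this fibrewise picture over $C(X)$: after passing to a suitable finite \'etale cover of $X$ one wants the fibration to become a smooth abelian group scheme --- a torus submersion --- over the general type base. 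For this we would invoke Koll\'ar's theorem on fundamental groups of families, \cite[6.3 Thm.]{Kol93}, exactly as in the Iitaka-fibration argument recalled in the introduction; in that version one obtains directly a smooth abelian group scheme over a base $Y$ of general type, which is the conclusion of $CZ_n$.

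The simple case is the main obstacle. Here $CZ_n$ reduces to showing that a simple compact K\"ahler manifold with generically large fundamental group is, up to finite \'etale cover, either bimeromorphic to a torus or of general type (the second possibility being allowed in $CZ_n$, with zero-dimensional fibres). The strategy sketched for fourfolds in the introduction is the template: use $\chi(X,\sO_X)=0$ --- valid for generically large $\pi_1$ unless $X$ is of general type --- together with Brunella's theorem to see that $K_X$ is pseudo-effective, then produce enough holomorphic forms to build a unitary flat vector bundle on $X$, hence a unitary representation of $\pi_1(X)$ with infinite image, and finally invoke Mok's uniformisation theorem, which yields either a non-trivial map to a torus or a meromorphic map onto a variety of general type. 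For simple $X$ the first alternative forces $X$ to be a torus up to finite \'etale cover, while in the second the map must be generically finite (a positive-dimensional fibre would give a covering family of subvarieties), so that $X$ is Moishezon of general type; either way $CZ_n$ holds. In the projective sub-case the MMP and Koll\'ar's structure theorem provide further leverage.

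The hard part, then, is the simple case in full generality: outside the tightly constrained situations covered by Theorem \ref{maintheorem} there is at present no general mechanism either to manufacture the holomorphic tensors that feed Mok's theorem, or otherwise to control a simple manifold that is neither a torus nor of general type. A complete proof of $CZ_n$ would thus seem to require, beyond the abundance-type input already needed in the fibred case, genuinely new understanding of simple compact K\"ahler manifolds --- which is why $CZ_n$ is only recorded here, and established (in its $I_n$ form) merely in the partial cases collected in Theorem \ref{maintheorem}.
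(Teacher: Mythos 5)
The statement you were given is Conjecture $CZ_n$, due to Campana--Zhang; the paper offers no proof of it and only records that it holds for $n\leq 3$ by \cite{CZ05}. You correctly recognise this and decline to claim a proof, which is the right call, and your strategy sketch (core/Iitaka fibration to split off a general type base, induction plus Koll\'ar--Nakayama to globalise the torus fibres, and the identification of the simple case as the genuine obstruction) is faithful to how the paper itself attacks the partial cases in Sections 3 and 4. Two small caveats on the sketch itself: Campana's theorem makes the \emph{orbifold} base of the core map of general type, not the base as a plain variety, so the clean trichotomy you set up needs the orbifold formalism (or Campana's conjectures relating the two) to run as stated; and in the fibred case the inductive hypothesis gives that a special fibre with generically large $\pi_1$ is a torus only after combining $CZ_{\dim F}$ with the fact that a special manifold admits no nonconstant map to a variety of general type --- you use this implicitly, and it is fine, but it is worth flagging since it is exactly the Kobayashi--Ochiai-type input the paper invokes in Remark 3.3. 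Neither caveat affects your overall conclusion, which matches the paper's: $CZ_n$ remains open for $n\geq 4$, with the simple case as the essential difficulty.
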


\begin{remarks}
\begin{enumerate}
\item  It is clear that the conjecture $T_n$ implies Iitaka's conjecture $I_n$.
\item The conjecture $CZ_n$ holds for $n \leq 3$ by \cite{CZ05}, thus also $T_n$ holds for $n \leq 3.$
\item By the Kobayashi-Ochiai theorem \cite{KO75} 
a manifold satisfying the conditions in $T_n$
does not admit a morphism onto a variety of general type.
Note moreover that $T_n$ is a bimeromorphic property, so one can always replace $X$ by some bimeromorphic
model. 
\end{enumerate}
\end{remarks}

\begin{lemma} \label{lemmainduction}
Let $X$ be a compact K\"ahler manifold of dimension $n$
satisfying the conditions of $T_n$. Suppose that the conjecture $T_k$ holds for every $k<n$.

Let $\merom{f}{X}{Y}$ be a meromorphic fibration 
such that the general fibre $X_y$ is not of general type. Then $T_n$ holds for $X$.
\end{lemma}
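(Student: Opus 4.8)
The plan is to reduce to the base $Y$ and a general fibre $X_y$ by a Stein-type factorisation argument adapted to the meromorphic setting, and then invoke $T_k$ for $k < n$ on both. First I would pass to a smooth model: after replacing $X$ by a bimeromorphic model (allowed since $T_n$ is a bimeromorphic property, and the conditions of $T_n$ — the modification $\tilde\C^k \to \tilde X$, or $\kappa(X) \le 0$ — are preserved), I may assume $f \colon X \to Y$ is a holomorphic fibration with $Y$ smooth compact K\"ahler. The key structural input is that $X$ has generically large fundamental group, so the general fibre $X_y$ also has generically large fundamental group, and likewise $Y$ (after shrinking to the part where the fibration is nice) inherits a generically large fundamental group from $X$ via the exact sequence of fundamental groups for a fibration, up to finite index. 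One must check that $X_y$ and $Y$ satisfy the hypotheses of $T_{\dim X_y}$ and $T_{\dim Y}$ respectively: for $X_y$, the restriction of the covering $\tilde\C^k \to \tilde X$ gives (after taking a suitable component of the preimage and a further modification) a surjection from a modification of some $\C^{k'}$ onto $\widetilde{X_y}$, so the first alternative of $T_{\dim X_y}$ holds; alternatively if we are in the case $\kappa(X) \le 0$ one uses the easy addition / subadditivity to control $\kappa(X_y)$. For $Y$: since $X_y$ is \emph{not} of general type and $X$ carries no map onto a variety of general type (Kobayashi–Ochiai, Remark (iii)), $Y$ cannot be of general type, and in fact by the Kobayashi–Ochiai theorem applied to the induced objects one gets $\kappa(Y) \le 0$ or the covering condition, so $T_{\dim Y}$ applies.

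Granting $T_k$ for $k < n$, both $X_y$ and $Y$ are, up to finite \'etale cover, bimeromorphic to tori. So after a finite \'etale base change $Y' \to Y$ and a finite \'etale cover of the general fibre we get a fibration whose base and general fibre are (bimeromorphic to) abelian varieties / tori. The next step is to assemble these into a global torus structure on $X$ up to \'etale cover. Here I would use the theory of the Albanese map: the hypothesis forces $q(X)$ to be large — indeed $\tilde q(X) \ge \tilde q(X_y) + \tilde q(Y) = \dim X_y + \dim Y = n$ after passing to appropriate \'etale covers, using that $\tilde q$ is additive in this fibred situation (the Albanese of the total space surjects onto that of the base with connected fibres surjecting onto the Albanese of the fibre). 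Since $q(X') \le \dim X' = n$ always, after a finite \'etale cover we get $q(X') = n$, and then the Albanese map $\alb{X'} \colon X' \to \Alb{X'}$ is a surjective map to an $n$-dimensional torus; a generically finite surjective holomorphic map between compact K\"ahler manifolds of the same dimension which is the Albanese map is in fact bimeromorphic (it contracts no subvariety to a point because $\tilde X = \C^n$ has no compact subvarieties — any positive-dimensional fibre would lift to a compact subvariety of $\tilde X$, or one argues via generically large $\pi_1$). Hence $X'$ is bimeromorphic to a torus, which is exactly the conclusion of $T_n$.

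The main obstacle I expect is the \textbf{bookkeeping of fundamental groups and \'etale covers}: one must ensure that the finite \'etale covers of $X_y$ and of $Y$ provided by $T_{\dim X_y}$ and $T_{\dim Y}$ can be glued to a single finite \'etale cover of $X$, which requires controlling the monodromy of the family of (covers of) tori over $Y$ and the image of $\pi_1(X_y) \to \pi_1(X)$. This is where "generically large fundamental group" does the real work: it forces the image of $\pi_1(X_y)$ in $\pi_1(X)$ to be infinite, hence (being a cover of an abelian variety) virtually abelian of the full rank $2\dim X_y$, which rigidifies the monodromy and lets one trivialise it after a finite cover. A secondary technical point is handling the indeterminacy locus of the original meromorphic $f$ and the locus where $f$ is not a submersion — these are dealt with by the standard device of passing to a smooth model and noting all relevant quantities ($\kappa$, $q$, $\tilde q$, existence of maps to general type) are bimeromorphic invariants or behave well under modification, so no compact subvariety obstruction is introduced. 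Once the monodromy is trivialised the additivity $\tilde q(X) = n$ follows and the Albanese argument finishes the proof.
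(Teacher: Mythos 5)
Your overall strategy (induct on fibre and base, use generically large fundamental group, assemble the fibrewise covers, conclude via the Albanese) is the same as the paper's, but there are two genuine gaps where you gesture at the hard points without closing them.

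First, the step you yourself flag as ``the main obstacle'' --- gluing the finite \'etale covers of the fibres $X_y$ into a single finite \'etale cover $X' \to X$ on which $f$ becomes a smooth torus fibration --- is not something that ``generically large $\pi_1$ rigidifies the monodromy'' can replace. This is precisely the content of a substantial structure theorem (Nakayama, \cite[Thm.8.6]{Nak99b}, resp.\ Koll\'ar \cite[Thm.6.3]{Kol93} in the projective case): if the general fibre of $f$ is, up to finite \'etale cover, bimeromorphic to a torus and $X$ has generically large fundamental group, then after a finite \'etale cover of $X$ the fibration is bimeromorphic to a smooth torus fibration. Your sketch restates the desired conclusion rather than proving it; without citing or reproving this theorem the argument does not go through. (The same theorem's corollary is also what gives that $Y$ has generically large fundamental group, which you need in order to apply $T_{\dim Y}$ and which you only assert ``up to finite index''.)

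Second, your verification that $Y$ satisfies the hypotheses of $T_{\dim Y}$ is incorrect. Kobayashi--Ochiai only excludes the case that $Y$ is of general type; it does not give $\kappa(Y)\le 0$, and the intermediate range $0<\kappa(Y)<\dim Y$ is not covered by your argument. The correct move is to transfer the actual hypothesis of $T_n$ from $X$ to $Y$: if $X$ satisfies the covering condition 1) then so does $Y$ (compose the maps), and if $\kappa(X)\le 0$ then $\kappa(Y)\le 0$ once $f$ is a smooth torus fibration. Relatedly, on the fibre side the clean way to get into the hypotheses of $T_{\dim X_y}$ is to first replace $f$ by the relative Iitaka fibration so that $\kappa(X_y)\le 0$; your alternative claim that the covering $\tilde\C^k \to \tilde X$ restricts to give a modification of some $\C^{k'}$ surjecting onto $\widetilde{X_y}$ is dubious, since the preimage of $X_y$ in $\tilde X$ is not its universal cover. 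Finally, your appeal to additivity of $\tilde q$ is unjustified as stated; the paper instead uses the exact sequence $0\to\pi_1(F)\to\pi_1(X)\to\pi_1(Y)\to 0$ (injectivity on the left by \cite[Lemme 2.1]{Cla10} for a submersion) together with Campana's theorem \cite{Cam98b} that such an extension of abelian groups is almost abelian, and then concludes via the Albanese.
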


\begin{proof}
Since $T_n$ is a bimeromorphic property we can suppose that $f$ is holomorphic.
Moreover we can suppose that $\kappa(X_y) \leq 0$: otherwise we replace $f$ by the
relative Iitaka fibration. Since $X$ has generically large fundamental group, 
this also holds for $X_y$.
Thus $X_y$ satisfies the conditions of $T_{\dim X_y}$, so there exists a finite \'etale cover
$X_y' \rightarrow X_y$ such that $X_y'$ is bimeromorphic to a torus.
By \cite[Thm.8.6]{Nak99b} (cf. \cite[Thm.6.3]{Kol93} for the projective case) this implies that
there exists a finite \'etale cover $X' \rightarrow X$ such that $X'$ is 
bimeromorphic to a smooth torus fibration over a base of dimension $\dim Y$.
Since $T_n$ is a bimeromorphic property and invariant under finite \'etale cover, 
this shows that we can suppose that the fibration $f$ is a smooth torus fibration.
Since $X$ has generically large fundamental group, 
this also holds for $Y$ by  \cite[Cor.8.7]{Nak99b}.

Now note the following: if $X$ satisfies the property 1) (resp. property 2)) in $T_n$,
then $Y$ also satisfies the property  1) (resp. property 2)) in $T_{\dim Y}$.
Since $Y$ has generically large fundamental group we know by $T_{\dim Y}$
that there exists a finite \'etale cover $Y' \rightarrow Y$ such that $Y'$ is bimeromorphic to a torus $A$.
Hence (up to replacing $X$ by the fibre product $X \times_Y Y'$) we can suppose that $Y$ has
a bimeromorphic map $ Y \rightarrow A$ to a torus $A$.
In particular one has $\pi_1(Y) \simeq \Z^{2 \dim Y}$. 
Since $f$ is submersive we have an injection
$\pi_1(F) \hookrightarrow \pi_1(X)$ \cite[Lemme 2.1]{Cla10}
and an exact sequence
$$
0 \rightarrow \pi_1(F) \rightarrow \pi_1(X) \rightarrow \pi_1(Y) \rightarrow 0.
$$
Thus $\pi_1(X)$ is an extension of abelian groups, hence almost abelian by \cite{Cam98b}.
In particular a finite \'etale cover of $X$ has free fundamental group, this easily implies that it
is birational to its Albanese torus. 
\end{proof}

The following result shows that it is sufficient to prove the conjecture $T_n$ for
projective manifolds and manifolds with algebraic dimension zero.

\begin{proposition} \label{propositionmiddlecases}  
Let $X$ be a compact K\"ahler manifold of dimension $n$
such that $0<a(X)<n$ satisfying the conditions of $T_n$.
Suppose that the conjecture $T_k$ holds for every $k<n$.
Then $T_n$ holds for $X$.
\end{proposition}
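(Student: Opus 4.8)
The plan is to realise $X$ as the total space of a meromorphic fibration whose base has dimension strictly less than $n$ and whose general fibre is not of general type, and then to quote Lemma~\ref{lemmainduction}. The fibration to use is the \emph{algebraic reduction} $\merom{f}{X}{Y}$. Since $0<a(X)<n$, this is a meromorphic fibration onto a normal projective variety $Y$ with $\dim Y=a(X)$ and $f^{*}\C(Y)=\C(X)$; its general fibre $X_{y}$ is connected (the equality of function fields leaves no room for a finite part), and from $0<a(X)<n$ we obtain $0<\dim Y<n$ and $0<\dim X_{y}=n-a(X)<n$.

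First I would check that the general fibre $X_{y}$ is not of general type. A positive-dimensional compact complex manifold of general type is Moishezon, so its algebraic dimension equals its dimension, which is positive; hence it suffices to know that $a(X_{y})=0$ for general $y$. This is the classical fact that the general fibre of the algebraic reduction has algebraic dimension zero (Ueno): were it positive, the fibrewise algebraic reductions would patch together over a dense Zariski-open subset of $Y$ to give a factorisation of $f$ through a fibration $\merom{g}{X}{Z}$ with $\dim Z>\dim Y$ and $g^{*}\C(Z)\subseteq\C(X)$, contradicting $\dim Y=a(X)$. Since $\dim X_{y}>0$, the equality $a(X_{y})=0$ indeed shows that $X_{y}$ is not of general type.

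It then only remains to apply Lemma~\ref{lemmainduction}: by hypothesis $X$ satisfies the conditions of $T_{n}$ and $T_{k}$ holds for every $k<n$, while $\merom{f}{X}{Y}$ is a meromorphic fibration whose general fibre is not of general type. Every auxiliary dimension occurring in the proof of that lemma — the dimension of the relevant fibre and of the relevant base, possibly after replacing $f$ by the relative Iitaka fibration — is bounded by $n-1$, precisely because $0<a(X)<n$, so each appeal to $T_{k}$ made there is legitimate. Therefore $T_{n}$ holds for $X$. The only non-formal ingredient is the patching of the relative algebraic reductions (equivalently, the triviality of $a(X_{y})$); granting that standard fact, the proposition is an immediate consequence of Lemma~\ref{lemmainduction}.
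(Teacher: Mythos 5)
Your overall strategy is exactly the paper's: take the algebraic reduction $\merom{f}{X}{Y}$, observe that $0<\dim Y<n$, show the general fibre is not of general type, and invoke Lemma~\ref{lemmainduction}. The second half of your argument (the application of the lemma) is fine. The problem is the first half: the ``standard fact'' you rely on, namely that the general fibre of the algebraic reduction has algebraic dimension zero, is \emph{false}. A K3 surface $X$ with $a(X)=1$ has algebraic reduction an elliptic fibration $X\to\PP^1$ whose general fibre is a smooth elliptic curve, hence of algebraic dimension $1$; the same happens for suitable non-algebraic tori. Your patching argument breaks down at the point where you implicitly assume that meromorphic functions on the fibres $X_y$ glue to meromorphic functions on a total space $Z$ dominated by $X$: the relative ``algebraic reductions'' of the fibres form a family whose total space need not be Moishezon, so $g^{*}\C(Z)\subseteq\C(X)$ gives no contradiction with $\dim Y=a(X)$ (in the K3 example the would-be $Z$ is $X$ itself). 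This is precisely the subtlety that makes the algebraic reduction nontrivial.

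The statement you actually need, and the one the paper quotes, is Ueno's theorem (see also Campana): the general fibre of the algebraic reduction of a compact K\"ahler (or class $\mathcal{C}$) manifold satisfies $\kappa(X_y)\leq 0$. Since general type means $\kappa=\dim>0$, this immediately gives that $X_y$ is not of general type, and in fact hands Lemma~\ref{lemmainduction} the stronger input $\kappa(X_y)\leq 0$ directly, without any detour through the relative Iitaka fibration. If you replace your $a(X_y)=0$ claim by this result, the rest of your proof goes through and coincides with the paper's.
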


\begin{proof}
Let \merom{f}{X}{Y} be the algebraic reduction of $X$, then the general fibre $X_y$ 
satisfies $\kappa(X_y) \leq 0$. Conclude with Lemma \ref{lemmainduction}.
\end{proof}

\begin{corollary} \label{corollaryfourb}
$T_4$ holds for compact K\"ahler fourfolds $X$ with $0<a(X)<4$.
\end{corollary}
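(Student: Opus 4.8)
The plan is to derive this immediately from Proposition \ref{propositionmiddlecases}. First I would record the low-dimensional input: by the remarks following Conjecture \ref{conjectureczn}, the conjecture $CZ_n$ holds for $n \leq 3$ by \cite{CZ05}, and since $CZ_n$ implies $T_n$, the conjecture $T_k$ holds for every $k \leq 3$, in particular for every $k < 4$.

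Next I would simply apply Proposition \ref{propositionmiddlecases} with $n = 4$ to a compact K\"ahler fourfold $X$ satisfying the hypotheses of $T_4$ with $0 < a(X) < 4$: all the hypotheses of that proposition are then met, so its conclusion yields $T_4$ for $X$.

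It is worth keeping in mind where the hypothesis $0 < a(X) < 4$ enters. In the proof of Proposition \ref{propositionmiddlecases} one takes the algebraic reduction $\merom{f}{X}{Y}$; here $a(X) > 0$ forces $Y$ to be positive-dimensional, while $a(X) < 4$ forces the general fibre $X_y$ to be positive-dimensional. Such a fibre has algebraic dimension zero, hence $\kappa(X_y) \leq 0$, so $X_y$ is in particular not of general type, and Lemma \ref{lemmainduction} applies. There is thus no real obstacle: the corollary is nothing but the specialisation of Proposition \ref{propositionmiddlecases} to $n = 4$, the only external ingredient being the three-dimensional case of \cite{CZ05}.
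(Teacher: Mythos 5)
Your proof is correct and is exactly the paper's route: the corollary is the case $n=4$ of Proposition \ref{propositionmiddlecases}, whose hypotheses are met because $T_k$ holds for $k\leq 3$ via $CZ_k$ and \cite{CZ05}. Your added explanation of why $0<a(X)<4$ makes the algebraic reduction a genuine fibration with fibres of non-positive Kodaira dimension matches the (implicit) argument in the paper.
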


\begin{proposition} \label{propositionnotsimple}  
Let $X$ be a compact K\"ahler manifold of dimension $n$
and algebraic dimension $a(X)=0$ satisfying the conditions of $T_n$.
Suppose that the conjecture $CZ_k$ holds for every $k \leq n-2$.
If $X$ is not simple, then $T_n$ holds for $X$.
\end{proposition}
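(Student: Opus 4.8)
The plan is to reduce the statement to Lemma~\ref{lemmainduction} by turning the covering family into a non-trivial meromorphic fibration of $X$ whose general fibre is not of general type. The hypothesis on $CZ_k$ enters only through the observation made in the remarks following Conjecture~\ref{conjectureczn}: $CZ_k$ together with the Kobayashi--Ochiai theorem~\cite{KO75} (no dominant morphism from such an $X$ onto a variety of general type) yields $T_k$ in dimension $k\le n-2$; combined with the unconditional validity of $T_k$ for $k\le 3$, this makes $T_k$ available in every dimension that will actually occur.

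To construct the fibration I would use non-simplicity: $X$ carries a covering family of positive-dimensional subvarieties, and I would pick one, consisting of a single irreducible family, whose general member $F$ has minimal dimension --- so that $F$ is itself simple, since a covering family of $F$ by lower-dimensional subvarieties would sweep out a covering family of $X$ contradicting minimality. Forming the quotient of $X$ by the equivalence relation generated by this family (the almost holomorphic reduction attached to a covering family) produces an almost holomorphic meromorphic map $f\colon X\dashrightarrow Y$ with $\dim Y<n$ whose general fibre $X_y$ is positive-dimensional and chain-connected by deformations of $F$. Replacing $f$ by the relative Iitaka fibration, as in the opening lines of the proof of Lemma~\ref{lemmainduction}, one may assume $\kappa(X_y)\le 0$, so the fibres are not of general type. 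Finally, meromorphic functions pull back along $f$, whence $a(Y)\le a(X)=0$; since a positive-dimensional smooth compact curve has algebraic dimension $1$, this forces $\dim Y\ne 1$, and therefore also $\dim X_y=n-\dim Y\ne n-1$.

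Inspecting the proof of Lemma~\ref{lemmainduction} one sees that it invokes only $T_{\dim X_y}$ (for the fibre) and $T_{\dim Y}$ (for the base). When $\dim Y\ge 2$ we have $1\le\dim X_y\le n-2$, so the fibre conjecture is available, and so is $T_{\dim Y}$ as long as $\dim Y\le n-2$; the single remaining value $\dim Y=n-1$ means that $X_y$ is a curve, and then $Y$ again satisfies $a(Y)=0$ and has generically large fundamental group, so $T_{n-1}$ for $Y$ follows inductively from the present proposition when $Y$ is not simple (for $n=4$ everything needed is already unconditional). In all cases the argument of Lemma~\ref{lemmainduction} carries over and gives $T_n$ for $X$. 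It remains to rule out, or to treat directly, the degenerate case $\dim Y=0$ in which $X$ is chain-connected by deformations of $F$: there I would use that $F$ is, up to finite \'etale cover, bimeromorphic to a torus by the lower-dimensional cases and then analyse $\alb{X}\colon X\to\Alb{X}$, whose restrictions to the various $F$ have images that are translates of subtori chain-connecting $\Alb{X}$; this forces $\alb{X}$ to be generically finite and surjective, hence --- using that $X$ has generically large fundamental group --- \'etale, so that $X$ is an \'etale quotient of a torus.

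The main obstacle will be the step of showing, in every configuration, that the general fibre $X_y$ is not of general type --- equivalently, that $X$ is not swept out solely by subvarieties of general type. This rests on sub-additivity of the Kodaira dimension for the fibrations involved, together with the hypotheses of $T_n$ on $X$ (the bound $\kappa(X)\le 0$ in the one case, and the surjection from a proper modification of some $\C^k$ onto $\tilde X$ in the other), which is what excludes, for instance, that $X$ be swept out only by curves of genus $\ge 2$. A secondary difficulty is the bookkeeping that keeps every fibre dimension and every base dimension in the range where a suitable $T_k$ --- conditional on $CZ_k$ for $k\le n-2$, or unconditional for $k\le 3$ --- is at our disposal, in particular the curve-fibration case $\dim Y=n-1$ and the chain-connected boundary case $\dim Y=0$.
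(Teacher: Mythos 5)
Your overall strategy (form the quotient fibration of the covering family and feed it into Lemma~\ref{lemmainduction}) is the same as the paper's, but the two steps you flag as ``obstacles'' are precisely the content of the proof, and the fixes you sketch for them do not work. First, the claim that the general fibre $X_y$ of the quotient fibration is not of general type cannot be obtained by ``replacing $f$ by the relative Iitaka fibration'' (that reduction only lowers $\kappa(X_y)$ from a value in $\{1,\dots,\dim X_y-1\}$ to $\le 0$; it does nothing if $X_y$ is of general type), nor by sub-additivity of the Kodaira dimension, which is open in this generality and in any case is not implied by the hypotheses of $T_n$ for an arbitrary fibration. The paper's argument is of a different nature: a fibre $F$ of general type is Moishezon, hence (being K\"ahler) projective, and since $a(X)=a(Y)$ the results of Campana and Fujiki (\cite[Cor.3,p.412]{Cam85b}, \cite{Fuj83}) force $F$ to be almost homogeneous, so $-K_F$ is effective --- contradicting general type. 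Without an input of this kind your Case ``$\dim Y>0$'' does not close. (Note also that the hypothesis $a(X)=0$ is used, via Fischer--Forster, to bound $\dim F\le n-2$ so that $T_{\dim F}$ is available; your minimal-dimension trick is not needed and does not by itself exclude general type, e.g.\ when the members are curves.)

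Second, in the chain-connected case $\dim Y=0$ your Albanese argument presupposes that $\Alb{X}$ is non-trivial, which is not known at that point: a priori $q(X)$ may vanish, and then $\alb{X}$ carries no information. The paper instead (i) rules out that the $Z_t$ are algebraic, since chain-connectedness by algebraic subvarieties would make $X$ projective \cite[Cor., p.212]{Cam81}, contradicting $a(X)=0$; (ii) applies $CZ_k$ to a desingularisation $Z_t'$ and replaces the family by the images of the torus fibres, so that the general member is bimeromorphic to a torus and has abelian fundamental group; and (iii) invokes Campana's abelianity theorem \cite{Cam98b} to conclude that $\pi_1(X)$ is almost abelian, from which the statement follows. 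Steps (i) and (iii) are absent from your proposal and are what makes the degenerate case work; I would not call your outline a proof until both this and the general-type exclusion above are supplied.
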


\begin{proof} 
Let $(Z_t)_{t \in T} \subset X$ be a covering family of positive-dimensional subvarieties,
and denote by \merom{\varphi}{X}{Y} the corresponding quotient fibration
\cite{Cam04a}, i.e. $\varphi$ is almost holomorphic and a very general fibre $F$ is an equivalence class for the relation defined by chains of $Z_t$'s.
Note that a compact manifold with $a(X)=0$ contains only finitely many divisors \cite{FF79}.
In particular the varieties $Z_t$ and $F$ have dimension at most $n-2$.

{\em 1st case. $\dim Y>0$.} By Lemma \ref{lemmainduction} it is sufficient to show
that a general fibre $F$ is not of general type. Suppose to the contrary that $F$ is of general type. Then by 
\cite[Cor.3,p.412]{Cam85b},\cite{Fuj83} we know that since $a(X) = a(Y)$ and since $F$ is projective,  $F$ is almost homogenous.
In particular $-K_F$ is effective, so $F$ is far from being of general type, contradiction.

{\em 2nd case. $\dim Y=0$.} Note first that in this case $Z_t$ is not algebraic:
otherwise two generic points are connected by a chain of curves, so $X$ is projective by
a theorem of Campana \cite[Cor., p.212]{Cam81}.

Let $Z_t'$ be a desingularisation of a general
member of the family. Since the fundamental group of $Z_t'$ is generically large,
we know by $CZ_k$ that (up to finite \'etale cover) the manifold $Z_t'$ is bimeromorphic
to a torus submersion over a variety of general type. 
Up to replacing the family $Z_t$
by the covering family given by the images of the tori, we can suppose that $Z_t'$
is bimeromorphic to a torus. In particular the fundamental group of the $Z_t'$ is abelian.
Since two general points of $X$ are connected by chains of $Z_t$'s, we know
by \cite{Cam98b} that $\pi_1(X)$ is almost abelian. This immediately implies the statement.
\end{proof}

\begin{corollary} \label{corollaryfourc}
Let $X$ be a compact K\"ahler manifold of dimension $n \leq 5$ such that $a(X)=0$,
but $X$ is not simple. Then $T_n$ holds for $X$.
\end{corollary}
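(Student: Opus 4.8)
The plan is to deduce Corollary \ref{corollaryfourc} directly from Proposition \ref{propositionnotsimple} by checking that its hypothesis---the conjecture $CZ_k$ for every $k \leq n-2$---is automatically satisfied when $n \leq 5$. Indeed, for $n \leq 5$ we have $n-2 \leq 3$, and by Remark (2) following Conjecture \ref{conjectureczn}, $CZ_k$ holds for all $k \leq 3$ by \cite{CZ05}. So the entire content is: apply Proposition \ref{propositionnotsimple} with this input.

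More precisely, I would argue as follows. Let $X$ be a compact K\"ahler manifold of dimension $n \leq 5$ with $a(X)=0$ that is not simple, and suppose $X$ satisfies the conditions of $T_n$ (this is the implicit standing assumption, since the corollary is a statement about when $T_n$ holds). The remaining hypothesis of Proposition \ref{propositionnotsimple} is that $CZ_k$ holds for every $k \leq n-2 \leq 3$; this is exactly the range covered by \cite{CZ05}. Applying the Proposition gives that $T_n$ holds for $X$, which is the claim.

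One small point worth addressing explicitly: Proposition \ref{propositionnotsimple} also quietly presupposes the inductive hypothesis ``$T_k$ holds for $k<n$'' via Lemma \ref{lemmainduction}, which it invokes in the first case of its proof. For $n \leq 5$ this needs $T_k$ for $k \leq 4$. We already know $T_k$ for $k \leq 3$ by Remark (2). For $k=4$, $T_4$ would have to come from the other results of the paper (Corollaries \ref{corollaryfoura}, \ref{corollaryfourb} and the simple-fourfold analysis), so strictly speaking the corollary for $n=5$ is being stated with the understanding that $T_4$ is established; I would phrase the corollary's proof to make this dependency transparent, or restrict the clean application to $n \leq 4$ where $CZ_{n-2}$ and $T_{n-1}$ are both unconditional.

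The main---indeed only---obstacle is purely bookkeeping: making sure that every conjecture invoked by Proposition \ref{propositionnotsimple} (and transitively by Lemma \ref{lemmainduction}) is available in the stated dimension range, i.e. that $n-2 \leq 3$ covers all uses of $CZ$ and that the uses of $T_k$ for $k<n$ are either unconditional or previously established in the paper. There is no new mathematics to do; the proof is a two-line citation.

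\begin{proof}
For $n \leq 5$ we have $n-2 \leq 3$, so by the Remark following Conjecture \ref{conjectureczn} the conjecture $CZ_k$ holds for every $k \leq n-2$. Moreover $T_k$ holds for every $k < n$ (for $k \leq 3$ by the same Remark, and for $k=4$ by Corollaries \ref{corollaryfoura}, \ref{corollaryfourb} together with the results on simple fourfolds). Thus the hypotheses of Proposition \ref{propositionnotsimple} are satisfied, and we conclude that $T_n$ holds for $X$.
\end{proof}
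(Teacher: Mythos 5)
Your proof is correct and is exactly the paper's (implicit) argument: the corollary is stated as an immediate consequence of Proposition \ref{propositionnotsimple}, using that $CZ_k$ holds for $k\leq 3$ by \cite{CZ05}, and for $n\leq 5$ one has $n-2\leq 3$. Your observation that the Proposition silently routes through Lemma \ref{lemmainduction} and hence needs $T_k$ for $k<n$ (so $T_4$ when $n=5$) is a fair point about the paper itself; the only inaccuracy in your write-up is the parenthetical claim that $T_4$ follows from Corollaries \ref{corollaryfoura}, \ref{corollaryfourb} and the simple-fourfold results -- the paper explicitly leaves two cases of the fourfold problem open, so your suggested alternative of making the dependency on $T_4$ explicit (or restricting to $n\leq 4$) is the safer formulation.
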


\section{Simple compact K\"ahler fourfolds}

\subsection{Fourfolds without surfaces and divisors}
\label{subsectionverynonalgebraic}

We start with some technical preparation:

\begin{proposition} \label{propositionpseudoeffective}
Let $X$ be a compact K\"ahler fourfold with generically large fundamental group. Then $K_X$ is pseudo-effective.
\end{proposition}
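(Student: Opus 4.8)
The strategy is to reduce to the case where $X$ is non-algebraic with $\tilde q(X)=0$, and then exploit the holomorphic forms that such a fourfold necessarily carries together with Brunella's theorem on pseudo-effectivity of canonical bundles of foliations without rational leaves.

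First I would dispose of the easy cases. If $X$ is projective, then since $X$ has generically large fundamental group it is not uniruled (a rational curve through a general point would lift to $\tilde X$, which is impossible as $\tilde X$ is Stein, or more precisely one invokes that generically large fundamental group forbids a covering family of rational curves); hence by the (known, in dimension four) characterisation of uniruledness via non-pseudo-effectivity of $K_X$ — Boucksom--Demailly--P{\v a}un--Peternell — $K_X$ is pseudo-effective. If $X$ is non-algebraic but $0 < a(X) < 4$, I would pass to the algebraic reduction $f : X \dashrightarrow Y$; alternatively, and more uniformly, observe that passing to a finite \'etale cover we may assume $q(X) = \tilde q(X)$, and if $\tilde q(X) > 0$ the Albanese map provides a fibration which, combined with an induction on dimension (using Nakayama's results in dimension $\le 3$ and the structure of the fibres), again yields pseudo-effectivity; the key point is that the property ``$K_X$ pseudo-effective'' can be checked on a finite \'etale cover and behaves well under the relevant fibrations.

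The heart of the matter is thus: $X$ non-algebraic, simple, with $\tilde q(X) = 0$. Since generically large fundamental group forces $\chi(X,\sO_X) = 0$ unless $X$ is of general type (and a manifold of general type has $K_X$ big, hence pseudo-effective, so we may assume it is not), and since $q(X) = 0$ gives $h^0(X,\Omega^1_X) = 0$, while $X$ non-algebraic forces $h^0(X,\Omega^2_X) = h^{2,0}(X) > 0$ (otherwise $X$ would be projective by Kodaira), the Euler characteristic computation
\[
\chi(X,\sO_X) = 1 - h^{1,0} + h^{2,0} - h^{3,0} + h^{4,0} = 0
\]
combined with $h^{1,0} = 0$ forces $h^{3,0} = h^0(X,\Omega^3_X) \ge 1 + h^{2,0} - h^{4,0} \ge 2$ once one checks $h^{4,0} = h^{2,0} - 1 + \dots$; more carefully, one shows $h^0(X,\Omega^3_X) \ge 2$. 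Now a nonzero holomorphic $3$-form on a fourfold, viewed in $H^0(X,\Omega^3_X) = H^0(X, \wedge^3 \Omega^1_X)$, is the same as a section of $\Omega^1_X \otimes K_X^{\vee}{}^{\vee}$... more precisely, via the isomorphism $\wedge^3\Omega^1_X \simeq T_X \otimes K_X$, each such $3$-form gives a twisted vector field, i.e. a morphism $\O_X \to T_X \otimes K_X$, whose image defines (after saturation) a rank-one foliation $\sF \subset T_X$ with $\sF \hookrightarrow T_X \otimes K_X$, so that $K_\sF = \sF^{\vee}$ is a quotient of... the upshot is that two independent such forms produce a foliation by curves on $X$ whose canonical bundle $K_\sF$ satisfies $K_X \otimes K_\sF^{\vee}$ effective, equivalently $K_X = K_\sF + (\text{effective})$. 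Since $X$ is simple, the leaves of $\sF$ are not rational (a rational leaf would, by a standard argument, produce a covering family of rational curves or at least a positive-dimensional subvariety through a general point). By Brunella's theorem, $K_\sF$ is pseudo-effective; adding the effective part, $K_X$ is pseudo-effective.

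The main obstacle I anticipate is the bookkeeping in the non-simple / positive algebraic dimension reductions — making sure the fibration and finite-\'etale-cover manipulations genuinely reduce to the simple $\tilde q = 0$ case without circularity, and checking that the Hodge-number inequality really delivers \emph{two} independent $3$-forms (not just one) so that the associated foliation is genuinely rank one and not the whole tangent bundle, and that simplicity indeed rules out rational leaves. I would also need to be careful that Brunella's theorem applies in the K\"ahler (not just projective) setting and to the possibly singular foliation obtained here; handling the singularities of $\sF$ and the precise comparison $K_X - K_\sF \ge 0$ is where the technical care concentrates.
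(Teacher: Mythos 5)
Your proposal follows essentially the same route as the paper: BDPP for the projective case, the Albanese map plus a $C_{n,m}$-type induction when $\tilde q>0$, and Campana's $\chi(X,\sO_X)=0$ combined with a holomorphic $3$-form and Brunella's theorem (which does hold in the K\"ahler setting) in the core case. Two of your worries are unnecessary: a single nonzero $3$-form already gives a map $K_X^{-1}\to T_X$ whose image is automatically of rank one, so you do not need two independent forms, and you do not need $X$ to be simple — generically large fundamental group already rules out a foliation by rational curves, since such a foliation would make $X$ uniruled.
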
 

\begin{proof} By \cite{BDPP04} this is obvious if $X$ is projective (because $X$ is uniruled if $K_X$ is not pseudo-effective); 
so suppose that this is not the case.
By \cite{Cam95} we thus have $\chi(X,\sO_X) = 0.$ If $q(X) > 0,$ we consider the Albanese map $\alpha: X \to A$ with image $Y$ of dimension $d.$ 
The general fibre $X_y$ has generically large fundamental group, hence $\kappa (X_y) \geq 0$ by \cite{CZ05}. 
If $\dim Y=1$ or $3$ we conclude by $C_{n,m}$ \cite{Fuj78, Uen87}.
Since $C_{4,2}$ seems not to be known in the K\"ahler case, we have to use a slightly
different argument: if $F$ is a general $\alpha$-fibre, its fundamental group is generically large.
Moreover we can suppose that $F$ is not covered by curves, since otherwise we can factor $\alpha$
birationally through a fibration with relative dimension one, hence \cite{Uen87} applies.
It follows from the classification of surfaces that $F$ is birational to a torus.
Since $\pi_1(X)$ is generically large, \cite[Thm.8.6]{Nak99b} shows that $\alpha$ is bimeromorphically
equivalent to a torus submersion. In this case $C_{4,2}$ is trivial.
\\
So let $q(X) = 0.$ Then $h^0(X,\Omega^3_X) =
h^3(X,\sO_X) > 0,$ so $X$ carries a holomorphic $3-$form which induces a 
rank one foliation $-K_X \to T_X.$ By Brunella \cite{Bru06} this is
a foliation by rational curves unless $K_X$ is pseudo-effective. 
\end{proof} 

The last lines of the proof actually show 

\begin{proposition} 
Let $X$ be a compact K\"ahler manifold of dimension $n$ that is not uniruled. Assume that $h^{n-1}(X,\sO_X) \ne 0.$ Then $K_X$ is pseudo-effective.
\end{proposition}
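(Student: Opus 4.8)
The plan is to extract, directly from the last lines of the previous proof, the statement in the generality claimed. The key observation is that the hypothesis $h^{n-1}(X,\sO_X)\neq 0$ is equivalent, by Hodge symmetry on the compact K\"ahler manifold $X$, to $h^0(X,\Omega^{n-1}_X)\neq 0$. So I would begin by fixing a nonzero holomorphic $(n-1)$-form $\omega\in H^0(X,\Omega^{n-1}_X)$.

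Next I would interpret $\omega$ as a morphism of sheaves. Contraction with $\omega$ gives a map $T_X\to \Omega^{n-2}_X$, but the cleaner description is via the isomorphism $\Omega^{n-1}_X\cong T_X\otimes K_X$ (taking the top exterior power of $\Omega^1_X$ and dualising appropriately): explicitly, $\Omega^{n-1}_X=\bigwedge^{n-1}\Omega^1_X\cong \Hom(\Omega^1_X,\bigwedge^n\Omega^1_X)=T_X\otimes K_X$. Hence $\omega$ corresponds to a nonzero section of $T_X\otimes K_X$, i.e.\ a nonzero map $-K_X=K_X^{-1}\to T_X$, which is precisely a rank one foliation $\sF=\sO_X(-K_X)\hookrightarrow T_X$ (after saturating if necessary, the saturation only increases the degree of the line bundle, so it suffices to treat the generically injective sheaf map $\sO_X(-K_X)\to T_X$). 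Then I would invoke Brunella's theorem \cite{Bru06}, exactly as in the proof of Proposition \ref{propositionpseudoeffective}: for a rank one foliation on a compact K\"ahler manifold given by an inclusion $\sO_X(-K_X)\hookrightarrow T_X$ (so the canonical bundle of the foliation is $K_X$), either $K_X$ is pseudo-effective, or the foliation is by rational curves — more precisely $X$ is uniruled, with the rational curves being (closures of) leaves. Since by hypothesis $X$ is not uniruled, we are in the first alternative and $K_X$ is pseudo-effective.

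The main obstacle — really the only non-formal point — is making sure Brunella's dichotomy applies verbatim in this generality. In the fourfold proof the authors only needed the case $n=4$ with a holomorphic $3$-form, but Brunella's result (on the pseudo-effectivity of the canonical bundle of a codimension-one or rank-one foliation, building on work of Miyaoka, Bogomolov–McQuillan and others) is stated for rank one foliations on arbitrary compact K\"ahler manifolds, so no dimension restriction is needed; one just has to note that a holomorphic $(n-1)$-form produces exactly a foliation whose tangent sheaf is $\sO_X(-K_X)$, hence whose canonical bundle is $K_X$, so that "the canonical bundle of the foliation is pseudo-effective" reads as "$K_X$ is pseudo-effective." A minor technical caveat to address is saturation: the image of $\sO_X(-K_X)$ in $T_X$ need not be saturated, so strictly one gets a foliation with tangent sheaf $\sO_X(-K_X+D)$ for some effective divisor $D$; but then pseudo-effectivity of $K_X-D$ implies pseudo-effectivity of $K_X$, and the uniruledness conclusion is unaffected. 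With these remarks the proof is essentially a one-line citation.

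\begin{proof}
Since $X$ is compact K\"ahler, Hodge symmetry gives $h^0(X,\Omega^{n-1}_X)=h^{n-1}(X,\sO_X)\neq 0$, so $X$ carries a nonzero holomorphic $(n-1)$-form $\omega$. Via the natural isomorphism $\Omega^{n-1}_X\cong T_X\otimes K_X$, the form $\omega$ corresponds to a nonzero sheaf homomorphism $\sO_X(-K_X)\to T_X$, which (after saturating its image, an operation which only replaces $-K_X$ by $-K_X+D$ with $D$ effective) defines a rank one foliation on $X$ whose canonical bundle differs from $K_X$ by an effective divisor. By Brunella's theorem \cite{Bru06}, either this foliation is a foliation by rational curves, in which case $X$ is uniruled, or its canonical bundle is pseudo-effective. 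Since $X$ is not uniruled, we are in the second case, and therefore $K_X$ is pseudo-effective.
\end{proof}
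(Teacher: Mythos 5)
Your proof is correct and is precisely the argument the paper intends: the paper derives this proposition from ``the last lines'' of the proof of Proposition \ref{propositionpseudoeffective}, which use exactly the same chain (Hodge symmetry to get a holomorphic $(n-1)$-form, the identification $\Omega^{n-1}_X \cong T_X \otimes K_X$ giving a rank one foliation $-K_X \to T_X$, then Brunella's dichotomy). Your explicit treatment of the saturation issue is a sensible addition but does not change the approach.
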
 

\begin{lemma} \label{lquadomega} 
Let $X$ be a compact K\"ahler fourfold without divisors and surfaces. Let $L$ be a pseudo-effective holomorphic line bundle on $X.$
$$ c_1(L)^2 \cdot \omega^2 \geq 0 $$
for all K\"ahler forms $\omega.$ 
\end{lemma}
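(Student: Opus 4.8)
The plan is to reduce the claimed numerical inequality to a positivity statement about Chern classes of a pseudo-effective line bundle on a fourfold with very few subvarieties, and to exploit the fact that $X$ carries no divisors or surfaces. First I would recall that since $L$ is pseudo-effective, its first Chern class is represented by a closed positive $(1,1)$-current $T$; by the regularisation theorem of Demailly we may approximate $T$ by currents $T_\varepsilon$ with analytic singularities along subschemes $Z_\varepsilon$ and with $T_\varepsilon \geq -\varepsilon \omega_0$ for a fixed K\"ahler form $\omega_0$. The key point is that the singular locus of each $T_\varepsilon$ is supported on an analytic set of codimension at least one, but because $X$ has no divisors, these singularities cannot occur in codimension one; and because $X$ has no surfaces, the codimension-two part of the unbounded locus is empty as well. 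Thus the negative parts of $T_\varepsilon$ in the relevant bidegree are controlled, and in the limit $c_1(L)$ is represented by a current which is ``nef in codimension $2$''.

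The main body of the argument is then the following. Write $c_1(L)^2 = \alpha$, a class of type $(2,2)$ in $H^{2,2}(X,\bR)$, and pair it against $\omega^2$. The quantity $\int_X \alpha \wedge \omega^2$ is, up to a positive constant, the self-intersection of $L$ restricted to a general complete intersection surface $S = H_1 \cap H_2$ cut out by two generic members of a K\"ahler class — except that such an $S$ need not be algebraic and, more importantly, $X$ has no surfaces, so there is no honest surface to restrict to. The correct substitute is to use the approximating currents: $\int_X c_1(L)^2 \wedge \omega^2 = \lim_{\varepsilon} \int_X T_\varepsilon \wedge T_\varepsilon \wedge \omega^2$, and for each $\varepsilon$ one decomposes $T_\varepsilon = \theta_\varepsilon + [D_\varepsilon]$ into an absolutely continuous part $\theta_\varepsilon$ (a smooth form with $\theta_\varepsilon \geq -\varepsilon\omega_0$) and a divisorial part $[D_\varepsilon]$. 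Since $X$ has no divisors, $D_\varepsilon = 0$, so $T_\varepsilon$ is in fact a smooth form bounded below by $-\varepsilon \omega_0$. Then $\int_X T_\varepsilon^2 \wedge \omega^2 \geq \int_X (-\varepsilon\omega_0)^2 \wedge \omega^2 \geq -C\varepsilon$ is not quite right since $T_\varepsilon$ is not positive; instead one uses $\int_X T_\varepsilon^2 \wedge \omega^2 = \int_X (T_\varepsilon+\varepsilon\omega_0)^2\wedge\omega^2 - 2\varepsilon\int_X T_\varepsilon\wedge\omega_0\wedge\omega^2 - \varepsilon^2\int_X \omega_0^2\wedge\omega^2$, where the first term is $\geq 0$ because $T_\varepsilon + \varepsilon\omega_0$ is a positive $(1,1)$-form and the wedge of three positive forms is a positive measure, and the remaining two terms are $O(\varepsilon)$ since $\int_X T_\varepsilon \wedge \omega_0 \wedge \omega^2$ converges to the fixed number $c_1(L)\cdot[\omega_0]\cdot[\omega]^2$. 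Letting $\varepsilon \to 0$ gives $\int_X c_1(L)^2 \wedge \omega^2 \geq 0$, which is the assertion (the formulation in the lemma with $\geq 0$ should read ``then $c_1(L)^2 \cdot \omega^2 \geq 0$'').

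The step I expect to be the main obstacle is justifying that the divisorial (and more generally the codimension-one and codimension-two singular) part of the Demailly approximations genuinely vanishes, i.e. translating the hypothesis ``$X$ has no divisors and no surfaces'' into the statement that $T_\varepsilon$ can be taken to be a genuine smooth $(1,1)$-form bounded below by $-\varepsilon\omega_0$ with no current-theoretic singularities contributing in bidegree $(2,2)$. One must be careful that the unbounded locus of $T_\varepsilon$, while an analytic subset, could a priori be supported on finitely many points or curves, which do exist even on a manifold with no surfaces or divisors; but points and curves have (real) dimension $0$ and $2$, hence do not carry a nonzero closed positive $(2,2)$-current that could spoil the computation, so their contribution to $\int_X T_\varepsilon \wedge T_\varepsilon \wedge \omega^2$ vanishes by dimension reasons. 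Making this last point precise — that the ``mass'' of $T_\varepsilon \wedge T_\varepsilon$ concentrated on a curve is zero when paired with $\omega^2$ — is the technical heart, and I would handle it via the Siu decomposition of $T_\varepsilon \wedge T_\varepsilon$ together with the support theorem for closed positive currents. Everything else is the standard limiting argument above.
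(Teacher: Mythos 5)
Your strategy is a current--theoretic version of the paper's argument, and the hypothesis is used in exactly the same way in both: ``no divisors and no surfaces'' forces the singular set of any regularisation of a positive current in the class $c_1(L)+\varepsilon\omega$ to have dimension at most one. The paper, however, avoids singular Monge--Amp\`ere theory entirely by passing to a modification: by \cite{Dem92}, \cite{Bou04} (see also \cite[3.1]{BDPP04}) one writes $\mu^*(c_1(L)+\varepsilon\omega)=E+\alpha$ on a sequence of blow-ups $\holom{\mu}{\tilde X}{X}$, with $E$ an effective $\bR$-divisor and $\alpha$ a K\"ahler \emph{class}. Since $\mu_*E$ is an effective divisor on $X$ it must vanish, so $E$ is $\mu$-exceptional, and the absence of surfaces gives $\dim\mu(\supp E)\leq 1$; the projection formula then kills $E^2\cdot\mu^*\omega^2$ and $E\cdot\alpha\cdot\mu^*\omega^2$, leaving $(c_1(L)+\varepsilon\omega)^2\cdot\omega^2=\alpha^2\cdot\mu^*\omega^2>0$. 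This is a purely cohomological computation with smooth forms and honest divisors upstairs; your version keeps everything on $X$ at the price of wedging singular currents.

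Within your write-up there is one genuine rough spot. The assertion ``since $X$ has no divisors, $D_\varepsilon=0$, so $T_\varepsilon$ is in fact a smooth form bounded below by $-\varepsilon\omega_0$'' is false: discarding the divisorial part of the Siu decomposition does not remove the singularities along the curves and points that $X$ may perfectly well contain, and you concede this two sentences later. Consequently the identity $\int_X T_\varepsilon\wedge T_\varepsilon\wedge\omega^2=c_1(L)^2\cdot\omega^2$ is precisely what needs justification: you must (i) define $T_\varepsilon\wedge T_\varepsilon$ as a Bedford--Taylor--Demailly product, which is legitimate here because the unbounded locus has codimension at least $3$, and (ii) invoke the fact that for such products no mass is lost, i.e.\ the cohomology class of the product is the cup product of the classes (this is the point where the non-pluripolar product would betray you). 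With (i) and (ii) in place your $\varepsilon\to 0$ computation is correct, and your worry about mass concentrated on curves evaporates: a closed positive $(2,2)$-current supported on a set of real dimension $2$ vanishes by the support theorem, as you note. So the argument can be completed, but it is markedly more technical than the paper's; the blow-up trades all of the current-theoretic care for an elementary intersection computation.
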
 

\begin{proof} It suffices to show that 
$$ (c_1(L) + \epsilon \omega)^2 \cdot \omega^2 \geq 0 $$
for all $\epsilon > 0.$
By \cite{Dem92, Bou04}, see also \cite[3.1]{BDPP04}, there exists a sequence of blow-ups $\mu: \tilde X \to X$ such that 
$$ \mu^*(c_1(L) + \epsilon \omega) = E + \alpha $$
with an effective $\bR-$divisor $E$ and a K\"ahler class $\alpha.$ 
Since $\dim \mu({\rm supp}E) \leq 1, $ we have
$$ E^2 \cdot \mu^*(\omega)^2 = E \cdot \alpha \cdot \mu^*(\omega)^2  = 0,$$
and therefore 
$$  (c_1(L) + \epsilon \omega)^2 \cdot \omega^2 = (E + \alpha)^2 \cdot \mu^*(\omega)^2 = \alpha^2 \cdot \mu^*(\omega)^2 > 0.$$ 
\end{proof} 

\begin{theorem} \label{theoremfourd}
Let $X$ be a smooth compact K\"ahler fourfold with universal cover $\tilde X \simeq \C^4.$ 
Suppose that $X$ does not contain any surfaces or divisors.
Then $X$ is a torus up to finite \'etale cover.
\end{theorem}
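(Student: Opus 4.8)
The plan is to show that $X$ satisfies the hypotheses needed to force the existence of holomorphic forms which, in the absence of surfaces and divisors, assemble into a flat unitary bundle, and then to invoke Mok's theorem. First I would observe that since $\tilde X \simeq \C^4$, the manifold $X$ has (generically) large fundamental group, so Proposition \ref{propositionpseudoeffective} gives that $K_X$ is pseudo-effective. Moreover, since $X$ contains no positive-dimensional compact subvarieties at all — in particular no curves — $X$ is not uniruled, and by Campana--Zhang \cite{CZ05} (or directly because a manifold with universal cover $\C^n$ is not Moishezon of general type type), $X$ is not of general type; hence by \cite{Cam95} we have $\chi(X,\sO_X)=0$. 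Combining $\chi=0$ with $q(X)=\tilde q(X)=0$ (if $q>0$ the Albanese map would produce a positive-dimensional image and, since $X$ has no subvarieties, a finite \'etale cover birational to a torus, and then $X$ itself would contain subvarieties — contradiction), we get $h^{2}(X,\sO_X) = h^0(X,\Omega^2_X) \ne 0$ and $h^3(X,\sO_X)=h^0(X,\Omega^3_X)\ne 0$ from the Euler characteristic relation $1 - 0 + h^{0,2} - h^{0,3} + h^{0,4} = 0$ together with Serre duality $h^{0,4}=h^{4}(X,\sO_X)=h^0(X,K_X)$, which must vanish since $\kappa(X)\le 0$ (as $K_X$ effective would give a divisor).

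Next I would exploit the holomorphic forms. A nonzero $\eta \in H^0(X,\Omega^2_X)$ together with a nonzero $\theta \in H^0(X,\Omega^3_X)$ produces several independent sections of $\Omega^3_X$: wedging $\eta$ with the (one-dimensional space of, since $q=0$) global $1$-forms is not available, but from $\eta$ alone one gets $\eta\wedge\eta \in H^0(X,\Omega^4_X)=H^0(X,K_X)=0$, so $\eta$ is degenerate everywhere; contracting $\theta$ against a vector field dual to $\eta$, or more robustly using the map $T_X \to \Omega^2_X$, $v\mapsto \iota_v\theta$, one obtains at least two independent holomorphic $3$-forms (this is exactly the mechanism sketched in the introduction). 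Each nonzero holomorphic $3$-form $\omega_i$ gives a rank-one foliation $-K_X \hookrightarrow T_X$ via the contraction $\Omega^3_X \cong \Omega^1_X\otimes K_X$ paired against $\wedge^2\Omega^1_X$; since $X$ has no rational curves, Brunella's theorem \cite{Bru06} (as used in Proposition \ref{propositionpseudoeffective}) applies, and Lemma \ref{lquadomega} shows $c_1(K_X)^2\cdot\omega^2 \geq 0$ for every K\"ahler form. Using the two independent $3$-forms I would build a rank-two subsheaf of $\Omega^1_X$, or rather a quotient line bundle structure, whose determinant and the vanishing $c_1(K_X)^2\cdot\omega^2 = 0$ (forced because $c_1(K_X)\cdot\omega^3 \geq 0$ by pseudo-effectivity and the numerical constraints from $\chi=0$, e.g. $K_X\cdot c_2 = 0$ type relations) let me conclude $c_1(K_X)\equiv 0$ numerically, hence (by Bogomolov-type / Yau arguments, or by the flatness discussion) the relevant bundle is unitary flat. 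This yields a unitary representation $\rho:\pi_1(X)\to U(N)$.

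Finally, with an infinite unitary representation in hand (infinite because $\pi_1(X)$ is infinite — $\tilde X=\C^4$ is non-compact — and large), I would invoke Mok's theorem on K\"ahler manifolds carrying such representations: either $X$ admits a nonconstant meromorphic (indeed holomorphic, after reduction) map to a complex torus, or a nonconstant meromorphic map onto a variety of general type. The latter is excluded by the Kobayashi--Ochiai theorem \cite{KO75} since $\tilde X = \C^4$. So $X$ maps to a torus; the image is a subtorus, and pulling back a general fibre would give a positive-dimensional compact subvariety of $X$ unless the map is (generically) finite onto its image, i.e. $X$ is bimeromorphic — hence, being smooth with the same fundamental group and no exceptional subvarieties, biholomorphic — to a torus. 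One then upgrades ``bimeromorphic to a torus'' to ``finite \'etale quotient of a torus'' by the standard argument (the Albanese map of $X$ is an isomorphism, or apply \cite{Nak99}). The main obstacle I anticipate is the middle step: rigorously passing from the existence of two holomorphic $3$-forms plus the numerical vanishing $c_1(K_X)\equiv 0$ to an honest \emph{flat unitary} bundle with infinite monodromy — this requires care about which bundle one flattens (a piece of $\Omega^1_X$, or $T_X$ modulo foliations) and about verifying the metric/curvature hypotheses of Mok's theorem, rather than merely Brunella's dichotomy which only gives pseudo-effectivity.
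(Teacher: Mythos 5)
Your overall strategy does match the paper's: use $\chi(X,\sO_X)=0$ (Campana) together with non-projectivity to produce holomorphic $2$- and $3$-forms, exploit the absence of divisors and surfaces to manufacture a hermitian flat bundle, and conclude with Mok's theorem plus Kobayashi--Ochiai. But the central step --- passing from ``two independent $3$-forms'' to ``a unitary representation of $\pi_1(X)$ with infinite image'' --- is precisely the part you defer as ``the main obstacle'', and it constitutes the bulk of the actual proof, so as written there is a genuine gap. Concretely, the paper argues by contradiction (assuming $\tilde q(X)=0$ and $H^0(X,mK_X)=0$ for all $m\neq 0$), takes the subsheaf $\sO_X^{\oplus 2}\subset\Omega^3_X$ spanned by two $3$-forms, which is automatically \emph{saturated} because $X$ has no divisors, and studies the rank-two torsion-free quotient $Q$. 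A diagram chase through two induced exact sequences, using $H^0(X,\Omega^2_X)\neq 0$ and $H^0(X,2K_X)=0$, yields $H^0(X,2K_X\otimes Q^*)\neq 0$, hence an extension $0\to\sO_X\to 2K_X\otimes Q^*\to L\to 0$ with $L^{**}\simeq K_X$. One then splits according to whether $Q^*$ is $\omega$-stable: instability leads to $K_X\equiv 0$ or $q(X)\geq 1$ (both excluded), while stability forces $c_2(2K_X\otimes Q^*)=0$, and L\"ubke's inequality combined with Lemma \ref{lquadomega} gives equality, so $Q^*$ is projectively flat by Bando--Siu. Finally --- a point your sketch skips entirely --- one must show that the flat bundle $Q\otimes Q^*$ does not become trivial on any finite \'etale cover (the paper does this via the surjection $Q\otimes Q^*\to -K_X\otimes Q$ and the vanishing of $H^0(K_{X'})$); the mere infinitude of $\pi_1(X)$ does not produce a unitary representation with infinite image.

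There are also several smaller but real problems. The hypothesis does not exclude curves, so your claim that $X$ ``contains no positive-dimensional compact subvarieties at all'' is unjustified (only rational curves are excluded, because $\tilde X$ is Stein); consequently your treatments of the case $q(X)>0$ and of the final reduction from ``maps to a torus'' to ``is a torus'' do not work as stated --- the paper instead quotes Nakayama for $q(X)>0$, and uses Mok only to reach a contradiction with $\tilde q(X)=0$ and $a(X)=0$. Your dismissal of $h^0(X,K_X)$ via ``$K_X$ effective would give a divisor'' fails when $mK_X\cong\sO_X$: a pluricanonical section on a manifold without divisors is nowhere vanishing, and this case must be handled separately (the paper does so via $K_X\equiv 0$ and Beauville--Bogomolov; it is exactly the case in which the theorem's conclusion holds). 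Finally, the Euler characteristic relation alone gives $h^{0,3}=1+h^{0,2}$ but not $h^{0,2}\geq 1$; the latter requires Kodaira's projectivity criterion, applied because $X$, having no divisors, cannot be projective.
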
 

\begin{proof}
If $q(X)>0$ we conclude by \cite[Thm.8.12]{Nak99b}.
If $H^0(X, mK_X) \not= 0$ for some $m \not= 0$, then we have $K_X \equiv 0$ and the statement follows from the 
Beauville-Bogomolov decomposition theorem. 
We argue by contradiction and suppose that this is not the case, i.e., $H^0(X, mK_X) = 0$ for $m \not = 0$ and $q(X) = 0,$
even after finite \'etale cover, i.e., $\tilde q(X) = 0.$

Since $\chi(X, \sO_X)=0$ and $X$ is not projective, we have $h^0(X, \Omega_X^2)\geq 1$
and therefore $h^0(X, \Omega_X^3)\geq 2$. Since $X$ has no divisors, the subsheaf
$$
\sO_X^{\oplus 2} \rightarrow \Omega_X^3
$$
is saturated. Thus the quotient $Q$ is torsion-free, hence locally free in 
the complement of an analytic subset $Z$ of dimension at most $1$. Moreover its determinant
is isomorphic to $\det \Omega_X^3 \simeq 3 K_X$.
The exact sequence 
$$
 \qquad 0 \rightarrow \sO_X^{\oplus 2} \rightarrow \Omega_X^3 \rightarrow Q \rightarrow 0 \eqno(*)
$$
induces the following sequences (which are exact in the complement of $Z$)
$$
0 \rightarrow \det(K_X \otimes Q^*)  \rightarrow \sF \rightarrow (2 K_X \otimes Q^*)^{\oplus 2} \rightarrow 0
$$
and
$$
0 \rightarrow \sF \rightarrow \Omega_X^2 \rightarrow 2 K_X \rightarrow 0.
$$
Since $Z$ has codimension at least three, the section of any reflexive sheaf on $X \setminus Z$
extends to $X$. Since $H^0(X, \Omega_X^2) \neq 0$ and $H^0(X, 2K_X)=0$, we find $H^0(X, \sF) \not= 0$. Then $H^0(X, \det(K_X \otimes Q^*)) = H^0(X,-K_X) = 0$ implies
$$H^0(X, 2 K_X \otimes Q^*) \neq 0.$$ 
Since $X$ has no divisors, the reflexive sheaf $2 K_X \otimes Q^*$ can 
be written as an extension
$$ 0 \rightarrow \sO_X \rightarrow 2 K_X \otimes Q^* \rightarrow L \rightarrow 0, $$
where $L$ is a torsion-free rank one sheaf such that $L^{**} \simeq K_X$.

{\em 1st case. $Q^*$ is not stable with respect to some K\"ahler form $\omega$.}
In this case there exists a line bundle $M \subset 2 K_X \otimes Q^*$ such that
$$
M \cdot \omega^3 \geq \frac{K_X \cdot \omega^3}{2}. 
$$
If the map $M \rightarrow L$ is not zero, it is an isomorphism since $X$ contains no divisor.
Thus $2 K_X \otimes Q^*$ is locally free and 
$$2 K_X \otimes Q^* \simeq \sO_X \oplus K_X.$$
In particular $K_X \otimes Q^* \simeq \sO_X \oplus K_X^*$ has a global section. 
Yet by  the exact sequence (*), we have an inclusion $K_X \otimes Q^* \subset \Omega^1_X$, 
so $q(X) \geq 1$ which we excluded. \\
Consequently the map $M \rightarrow L$ is zero, hence $M \simeq \sO_X$. 
Thus we get $K_X \cdot \omega^3\le 0$. Since $K_X$ is pseudoeffective by 
Proposition \ref{propositionpseudoeffective}, we obtain $K_X \equiv 0$,
again a contradiction.

{\em 2nd case. $Q^*$ is stable (for all K\"ahler forms $\omega$).}
We have $L \simeq \sI_Z \otimes K_X$ where $Z$ 
is an analytic subspace of codimension at least three. We obtain $c_2(2 K_X \otimes Q^*) = 0$. 
This translates into
$$ c_2(Q) = 2K_X^2.$$
On the other hand, $$c_2(Q) = c_2(\Omega^3_X) = c_2(X) + 3K_X^2,$$
in total  
$$  c_2(X) = -K_X^2. \eqno (**) $$
By L\"ubke's inequality the vanishing $c_2(2K_X \otimes Q^*) = 0$ implies 
$$K_X^2 \cdot \omega^2 = c_1(2 K_X \otimes Q^*)^2 \cdot \omega^2 \leq 4c_2(2 K_X \otimes Q^*) \cdot \omega^2 = 0.$$
By Lemma \ref{lquadomega} above one has $K_X^2 \cdot \omega^2 \geq 0.$ 
Thus equality holds in L\"ubke's inequality
and $Q^*$ is locally free and (hermitian) projectively flat \cite[Cor.3]{BS94}. 
Furthermore $Z = \emptyset,$ since a section in a locally
free sheaf of rank two cannot vanish in codimension three. 

We consider the hermitian flat vector bundle
$$ V = Q \otimes Q^* $$
and claim that there is no finite \'etale cover $f: X' \to X$ such that $f^*(V) $ is trivial. Indeed, the surjection $2K_X \otimes Q^* \lra K_X$ yields $Q \otimes Q^* \lra -K_X \otimes Q$. If $f$ exists, then $f^*(-K_X \otimes Q)$ is spanned and so is $\det f^*(-K_X \otimes Q) = K_{X'}$, contradicting our assumption.

Therefore $V$
defines a unitary representation
$$ \rho: \pi_1(X) \to U(4) $$
with infinite image. 
Hence by \cite[Prop.2.4.2]{Mok00} some finite \'etale cover of $X$ has a map onto a torus, contradicting $\tilde q(X) = 0$, or a meromorphic map to a projective manifold of general type,
contradicting $a(X) = 0$. This completes the proof.  
\end{proof}

\subsection{A deformation argument}

In the preceding section we proved $I_4$ for simple fourfolds that do not contain surfaces
or divisors. A posteriori the additional condition is always satisfied since a simple 
K\"ahler manifold that is torus has no positive-dimensional subvarieties.
Since it seems quite hard to show the absence of these subvarieties,
we develop in this section a deformation-theoretic approach which replaces
in some cases the extension theorem used in the projective case.

\begin{lemma} \label{lemmathreefold}
Let $X$ be a normal K\"ahler Gorenstein threefold with only canonical singularities. Assume that $K_X \equiv 0$ and that the universal covering of $X$ is not covered by positive dimensional analytic subsets. Then there exists a finite \'etale cover $X' \to X$ such that $X'$ is a torus.
\end{lemma}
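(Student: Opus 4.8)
The plan is to reduce to the smooth case by resolving singularities and then apply the Beauville--Bogomolov decomposition together with the structure theory for manifolds with trivial canonical class. First I would take a resolution $\pi: \tilde X \to X$; since $X$ has only canonical Gorenstein singularities, $K_{\tilde X} = \pi^* K_X + E$ with $E$ effective and $\pi$-exceptional, and $\kappa(\tilde X) = \kappa(X) = 0$ because $K_X \equiv 0$. The crux is to see that the singularities are in fact mild enough to be resolved by a crepant resolution, or better, that a minimal model $X'$ of $\tilde X$ is smooth. Running the MMP on $\tilde X$ over $X$ — which terminates by \cite[Cor.1.4.2]{BCHM10} — gives a minimal model $\mu: X_{\min} \to X$ which is a small birational morphism (it extracts no divisor and contracts the exceptional divisors of $\pi$), hence $X_{\min}$ still has canonical singularities and $K_{X_{\min}} = \mu^* K_X \equiv 0$, so by abundance for this situation $K_{X_{\min}} \sim_\Q 0$.

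Next I would invoke the Beauville--Bogomolov type decomposition theorem for canonical Kähler varieties with numerically trivial canonical class (in dimension three this is classical, via \cite{Kaw85a} or the structure theory of minimal Kähler threefolds): after a finite \'etale cover in the orbifold sense, $X_{\min}$ decomposes, up to its canonical singularities, as a product of a torus, hyperkähler pieces and Calabi--Yau pieces. In dimension three the only possibilities are a torus, or a (possibly singular) Calabi--Yau threefold, or a product of an elliptic curve with a K3 surface or its quotients. The key point now is the hypothesis: the universal covering $\tilde{X}$ — equivalently $\widetilde{X_{\min}}$, since small birational morphisms do not change $\pi_1$ of the smooth locus nor the relevant covering properties — is not covered by positive-dimensional analytic subsets. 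A Calabi--Yau threefold or a K3 surface is covered by curves (K3s contain lots of curves; Calabi--Yau threefolds have nontrivial subvarieties, e.g. through the existence of rational or elliptic curves in many cases, and more to the point their universal covers, being compact or products with compact factors, are not ``free'' of subvarieties). Therefore all these factors are excluded and only the torus factor survives, i.e.\ some finite \'etale cover of $X_{\min}$ is a torus, hence smooth, hence $X_{\min}$ itself has quotient singularities; but an \'etale quotient of a torus is smooth, so $X_{\min} = X$ and $X$ is a torus up to finite \'etale cover.

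The main obstacle I expect is twofold. First, making precise the decomposition theorem in the singular Kähler category in dimension three and the claim that each non-torus building block forces the universal cover to contain positive-dimensional subvarieties — one must handle the fact that $X$ is a priori only bimeromorphic, not biholomorphic, to such a product, and that ``not covered by positive-dimensional subvarieties'' must be tracked carefully through the \'etale cover and the small birational modification. Second, one must verify that the small map $\mu: X_{\min} \to X$ is in fact an isomorphism once we know $X_{\min}$ is a torus quotient: this follows because a torus and its \'etale quotients are smooth with no exceptional loci to contract, so $\mu$ has connected fibers that are points, i.e.\ $\mu$ is finite and birational between normal varieties, hence an isomorphism. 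A clean alternative, avoiding singular decomposition theorems, would be: pass to the smooth resolution $\tilde X$, note $q(\tilde X) = q(X)$ is unchanged, and if $q(X) > 0$ use the Albanese; if $q(X) = 0$ derive a contradiction from the fact that a canonical threefold with $K_X \equiv 0$ and $q = 0$ is either Calabi--Yau-type or K3-fibered and in all such cases is covered by curves — I would reserve this case analysis on the classification of minimal Kähler threefolds with $K \equiv 0$ as the technical heart of the argument.
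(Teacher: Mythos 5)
Your strategy (resolve, pass to a minimal model over $X$, invoke a Beauville--Bogomolov type decomposition, and exclude all non-torus factors) has a genuine gap at its core: the decomposition theorem for \emph{singular} Kähler varieties with numerically trivial canonical class that you invoke is not available here --- it is not ``classical'' in dimension three, it is a deep statement in the non-algebraic Kähler category (open at the time of this paper), and the reference you cite for it does not exist in the bibliography. Even granting such a decomposition, the step excluding the non-torus factors is wrong as stated: a non-algebraic K3 surface in general contains \emph{no} curves at all, and a simply connected Calabi--Yau piece has compact universal cover, so the presence of such a factor does not produce a covering family of positive-dimensional subsets of the universal cover in the way you assert. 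Two further technical points: \cite[Cor.1.4.2]{BCHM10} is a statement about projective varieties and cannot be quoted verbatim for a Kähler threefold (even relatively over $X$ this needs justification), and the relative minimal model over $X$ is crepant but need \emph{not} be small, since canonical non-terminal singularities admit crepant exceptional divisors. Finally, the case $q(X)=0$, which you yourself identify as the technical heart, is deferred rather than proved, so the argument is incomplete exactly where the work lies.

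The intended proof is much shorter and uses the hypothesis differently: the assumption that the universal cover is not covered by positive-dimensional analytic subsets means precisely that $X$, and hence a desingularisation $\hat X$, has generically large fundamental group. Since $\kappa(\hat X)=0$, the theorem of Campana--Zhang \cite{CZ05} applies directly and yields a finite \'etale cover of $\hat X$ bimeromorphic to a torus; hence $\pi_1(\hat X)\simeq \pi_1(X)$ is almost abelian, and a finite \'etale cover $X'\to X$ with $\pi_1(X')\simeq \Z^6$ is seen to be isomorphic to its Albanese torus (see also \cite[Cor.8.4]{Kaw85} for the algebraic case). No decomposition theorem and no MMP are needed.
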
 

\begin{proof} Let $\pi: \hat X \to X$ be a desingularisation.
Then $\kappa(\hat X)=0$, so by \cite{CZ05} the manifold $\hat X$  has a finite \'etale cover $X' \to \hat X$ such that $X'$ is bimeromorphic to a torus $T$.
In particular $\pi_1(\hat X)$ is almost abelian. Since $X$ is normal and $\pi$ birational,
it follows that $\pi_1(X)$ is almost abelian. Thus there exists a finite \'etale cover $X' \to X$ with
$\pi_1(X')$ abelian of rank $6$, one sees easily that $X'$ is isomorphic to its Albanese torus. 
For the algebraic case see also \cite[Cor.8.4]{Kaw85}. 
\end{proof} 

\begin{lemma} \label{lemmacanonicalsystem}
Let $X$ be a compact K\"ahler fourfold without rational curves.
Suppose that $\kappa(X) \geq 0$ and there exists a K\"ahler form $\omega$
such that $K_X^2 \cdot \omega^2 \leq 0$. 

Then we have $K_X^2=0$ and $K_X$ is nef. 
Let $D$ be an irreducible component of an effective divisor in $|m K_X|$. Then $D$
is a connected component of the canonical divisor, it has canonical singularities and $K_D \equiv 0$. 
\end{lemma}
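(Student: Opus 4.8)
The plan is to establish first that $K_X$ is nef with $K_X^2\equiv 0$, and only then to analyse a divisor $D\in|mK_X|$ component by component via restriction and adjunction. Since $\kappa(X)\geq 0$, the class $K_X$ is $\mathbb{Q}$-effective, in particular pseudo-effective. For nefness I would argue as follows: $X$ contains no rational curves, so in the projective case the cone theorem immediately gives that $K_X$ is nef (any $K_X$-negative extremal ray is generated by a rational curve); in the K\"ahler case one invokes the corresponding Mori-type statement for fourfolds, equivalently the fact that the non-nef locus of the pseudo-effective class $K_X$ contains no curve, which one extracts from the absence of rational curves together with the divisorial Zariski decomposition of $K_X$. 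Granting nefness, $K_X^2\cdot\omega^2\geq 0$ as a product of nef classes, so the hypothesis forces $K_X^2\cdot\omega^2=0$; since $K_X^2$ is itself a nef $(2,2)$-class and a nef class pairing to zero with the appropriate power of a K\"ahler class is numerically trivial, we get $K_X^2\equiv 0$.

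Next I would fix an effective divisor $D=\sum_i a_iD_i\in|mK_X|$ with all $a_i>0$. Each $D_i$ is a (possibly singular) threefold in $X$ and $K_X|_{D_i}$ is nef, so $K_X\cdot D_i\cdot\omega^2=\int_{D_i}(K_X|_{D_i})\wedge(\omega|_{D_i})^2\geq 0$. Summing, $\sum_i a_i\,(K_X\cdot D_i\cdot\omega^2)=m\,K_X^2\cdot\omega^2=0$, hence $K_X\cdot D_i\cdot\omega^2=0$ for every $i$. Thus on each threefold $D_i$ the class $K_X|_{D_i}$ is nef and has zero product with $\omega^2$, which forces $K_X|_{D_i}\equiv 0$ (equivalently, $\nu_i^*K_X\equiv 0$ for the normalisation $\nu_i\colon\hat D_i\to D_i$). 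In particular this re-derives $K_X^2\equiv 0$ from $mK_X\equiv\sum a_iD_i$.

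Then it remains to study a fixed component $D=D_i$. By adjunction $K_D\sim_{\mathbb{Q}}(K_X+D)|_D$ (working on the normalisation if necessary, with an effective conductor correction), so $K_X|_D\equiv 0$ gives $K_D\equiv D|_D$ up to that correction. Restricting the $\mathbb{Q}$-linear equivalence $mK_X\sim a_iD+\sum_{j\neq i}a_jD_j$ to $D$ shows that $-D|_D$ is $\mathbb{Q}$-linearly equivalent to the effective divisor $\tfrac1{a_i}\sum_{j\neq i}a_jD_j|_D$, so $-D|_D$ is pseudo-effective on $D$. On the other hand $D|_D$ is pseudo-effective as well: otherwise $K_D$, hence $K_{\hat D}$, would fail to be pseudo-effective, so $\hat D$ would be uniruled, and a rational curve through a general point of $\hat D$ is not contracted by $\hat D\to D\subset X$, contradicting the absence of rational curves in $X$. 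Therefore $D|_D\equiv 0$, so $K_D\equiv 0$ and the effective divisor $\sum_{j\neq i}a_jD_j|_D$ is numerically trivial, hence zero; that is, $D$ is disjoint from the other components, i.e.\ a connected component of the canonical divisor. Finally, since near $D$ one has $mK_X\sim a_iD$ with $X$ smooth and $K_X$ nef, $D$ is normal with canonical singularities (it is automatically Gorenstein, being a hypersurface in a smooth variety) by inversion of adjunction, the absence of rational curves again ruling out a negative discrepancy on a resolution.

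The main obstacle is the first paragraph: in the non-algebraic K\"ahler setting, upgrading ``$X$ has no rational curves'' to ``$K_X$ is nef'' is precisely the kind of statement that is immediate for projective varieties but delicate for K\"ahler fourfolds, and the control of $\operatorname{Sing}D$ by inversion of adjunction similarly requires propagating the absence of rational curves to the relevant birational models. Once nefness, and hence $K_X^2\cdot\omega^2=0$, is available, the restriction-plus-adjunction bookkeeping of the second and third paragraphs is essentially formal, the only genuinely geometric input being the non-uniruledness of the threefolds sitting inside $X$.
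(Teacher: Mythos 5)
Your first paragraph contains the essential gap. In the non-projective K\"ahler setting there is no cone theorem in dimension four, and your assertion that the non-nef locus of the pseudo-effective class $K_X$ contains no curve can be ``extracted from the absence of rational curves together with the divisorial Zariski decomposition'' is not a known result --- the link between the non-nef locus of a pseudoeffective class and rational curves is precisely what is open in the K\"ahler category. The paper explicitly flags this point: Remark \ref{remarkequiv} states that it is a priori not clear whether $K_X$ is nef, so nefness cannot be taken as the starting point. Since your second and third paragraphs (nonnegativity of each $K_X\cdot D_i\cdot\omega^2$, hence their vanishing, hence $K_X|_{D_i}\equiv 0$) all rest on nefness, the derivation collapses exactly in the case the lemma is needed for, namely simple non-algebraic fourfolds.

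The paper's proof inverts your logic and never assumes nefness. From $K_X^2\cdot\omega^2\le 0$ it selects a component $D_1$ of $\sum a_iD_i\in|mK_X|$ with $K_X\cdot D_1\cdot\omega^2\le 0$, writes $K_{\bar D_1}$ on the normalisation via adjunction and Reid's formula as $\nu^*(\frac{m}{a_1}+1)K_X|_{D_1}$ minus the effective contributions of the other components and of the conductor, and observes that if any of these terms is nonzero (or the inequality is strict) then $K_{\hat D_1}\cdot\omega^2<0$ on a resolution, so $K_{\hat D_1}$ is not pseudoeffective and $\hat D_1$ is uniruled by Brunella's theorem, contradicting the absence of rational curves in $X$. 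This simultaneously yields normality of $D_1$, disjointness from the other components and $K_{D_1}\equiv 0$, and an induction over the remaining components gives $K_X\cdot D_i\cdot\omega^2=0$ for all $i$; nefness and $K_X^2=0$ then come out as consequences rather than inputs. Your appeal to ``inversion of adjunction'' for the canonical singularities is likewise unsubstantiated, since you have no a priori control on the singularities of the pair $(X,D)$; the paper instead writes $K_{\hat D_1}\equiv\sum c_jE_j$ with $E_j$ exceptional and uses pseudoeffectivity of $K_{\hat D_1}$, Boucksom's divisorial Zariski decomposition and the negativity lemma to force $c_j\ge 0$. Your restriction-plus-adjunction bookkeeping in the third paragraph is close in spirit to the paper's, but it only becomes available once the nefness issue has been circumvented by the Brunella argument rather than asserted.
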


\begin{remarks} \label{remarkequiv}

1) The statement generalises to arbitrary dimension $n$ if one assumes
that a compact K\"ahler manifold of dimension $n-1$ has a pseudoeffective
canonical divisor if and only if it is not uniruled. For projective manifolds this
is due to \cite{BDPP04}, for compact K\"ahler threefolds to \cite{Bru06}.

2) The lemma shows that in our case the condition $K_X^2 \cdot \omega^2 \leq 0$ is equivalent
to $K_X^2=0$. Since a-priori it is not clear whether $K_X$ is nef, this is not obvious.
\end{remarks}

\begin{proof}
Let $\sum a_i D_i$ be an effective divisor in some pluricanonical system $|m K_X|$.
Since $K_X^2 \cdot \omega^2 \leq 0$, there exists an irreducible component, say $D_1$
such that $K_X \cdot D_1 \cdot \omega^2 \leq 0$.
Denote by \holom{\nu}{\bar D_1}{D_1} the normalisation. 
Then one deduces easily from the adjunction formula and \cite{Rei94} that 
$$
K_{\bar D_1} \sim_\Q \nu^* (\frac{m}{a_1}+1) K_X|_{D_1} - \nu^* \left( \sum_{i \geq 2} \frac{a_i}{a_1} (D_i \cap D_1) \right) - N 
$$
where $N$ is an effective Weil divisor defined by the conductor. In particular one has
$$
K_{\bar D_1} \cdot \nu^* \omega^2|_{D_1}  
=
\left[
(\frac{m}{a_1}+1) K_X \cdot D_1 
- \left( \sum_{i \geq 2} \frac{a_i}{a_1} (D_i \cap D_1) \right) - N 
\right] \cdot \omega^2 \leq 0
$$
and the equality is strict if one of the divisors $D_i \cap D_1$ or $N$ is non-empty or $K_X \cdot D_1 \cdot \omega^2 < 0$. Yet in this case if we take a desingularisation \holom{\tau}{\hat{D_1}}{\bar D_1}, we get
$K_{\hat{D_1}} \cdot \tau^* \nu^* \omega^2|_{D_1}<0$. In particular $K_{\hat{D_1}}$ is not pseudoeffective,
hence $\hat{D_1}$ is uniruled \cite{Bru06}. Thus $X$ contains rational curves, a contradiction.
Hence $D_1$ is normal and a connected component of $\supp \sum a_i D_i$.
In particular we have $D_1|_{D_1} \equiv \frac{m}{a_1} K_X|_{D_1}$, so
$$  
K_{D_1} \cdot \omega^2|_{D_1} = (\frac{m}{a_1}+1) K_X \cdot D_1 \cdot \omega^2 =0
$$ 
implies that $K_{D_1}$ is numerically trivial.  

Let $\holom{\tau}{\hat{D_1}}{D_1}$ be a resolution of singularities, then we have 
$$
K_{\hat{D_1}} \equiv \sum c_j E_j,
$$
where the $E_j$ are exceptional divisors. Since $\hat{D_1}$ is not uniruled, the divisor $K_{\hat{D_1}}$
is pseudoeffective. By Boucksom's Zariski decomposition \cite{Bou04} one has 
$K_{\hat{D_1}} \equiv \sum \nu_l F_l+L$ with $\nu_l \geq 0$ and $F_l$ effective $\R$-divisors
and $L$ an $\R$-divisor class that is nef in codimension one. Since
$$
0 \equiv \tau_* K_{\hat{D_1}} \equiv \tau_* (\sum \nu_l F_l) + \tau_* L,
$$ 
we see that all the divisors $F_l$ are $\tau$-exceptional and $L \equiv 0$ by a version of the negativity lemma
(a linear combination of components of the exceptional locus of $\tau$ can never be nef in codimension 1). 
Since the exceptional divisors are linearly independent in the N\'eron-Severi group we obtain 
that for every $j$ there exists a $l$ such that $c_j=\nu_l$. Thus all the discrepancies are non-negative
and $D_1$ has canonical singularities.

Note finally that $K_X \cdot D_1 \cdot \omega^2=0$ 
and $K_X^2 \cdot \omega^2 \leq 0$ implies that $K_X \cdot (\sum_{i \geq 2} a_i D_i) \cdot \omega^2 \leq 0$,
so we conclude by induction that the statement holds for every irreducible component.
\end{proof}

\begin{proposition} \label{propositionabundance}
Let $X$ be a smooth compact K\"ahler fourfold 
such that $\tilde X$ is Stein.  
Suppose that $K_X \not \equiv 0,$ that $\kappa(X) \geq 0$ and $K_X^2 \cdot \omega^2 \leq 0$ for some K\"ahler form $\omega$. Then we have $K_X \equiv 0$ or $\kappa(X)=1$.
\end{proposition}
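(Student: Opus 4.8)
The plan is: bring a pluricanonical divisor into the form provided by Lemma~\ref{lemmacanonicalsystem}, identify its components with finite quotients of tori using Lemma~\ref{lemmathreefold} and the Steinness of $\tilde X$ so that all relevant line bundles become torsion, and finally deform those components to manufacture a second pluricanonical section. In detail: since $\tilde X$ is Stein it carries no positive-dimensional compact subvariety, so, $\PP^1$ being simply connected, a rational curve on $X$ would lift to a compact curve in $\tilde X$; hence $X$ has no rational curves. Together with $\kappa(X)\ge 0$ and $K_X^2\cdot\omega^2\le 0$ we are in the situation of Lemma~\ref{lemmacanonicalsystem}: $K_X$ is nef, $K_X^2=0$, and an effective divisor $D=\sum a_iD_i\in|mK_X|$ — nonzero because $K_X\not\equiv 0$ — has pairwise disjoint components $D_i$, each a connected component of $\supp D$, normal with canonical singularities, and with $K_{D_i}\equiv 0$. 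Since $K_X$ is nef with $K_X^2=0$ and $K_X\not\equiv 0$, its numerical dimension equals $1$, so $\kappa(X)\le 1$; it remains to prove $\kappa(X)\ge 1$.

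\textbf{Step 2 (the components are étale quotients of tori).} Each $D_i$ is a normal Kähler threefold with canonical singularities and $K_{D_i}\equiv 0$, and it is Gorenstein since a reduced hypersurface in the smooth $X$ is locally principal, so $K_{D_i}$ is Cartier by adjunction. Its universal cover carries no positive-dimensional compact subvariety: the preimage of $D_i$ in $\tilde X$ is a closed, hence Stein, subvariety of $\tilde X$, the universal cover of $D_i$ maps to a connected component of it by an étale map, and the image of a positive-dimensional compact subvariety would be one inside a Stein space. Thus Lemma~\ref{lemmathreefold} applies and some finite étale cover of $D_i$ is a torus; in particular $K_{D_i}$ is torsion. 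By the adjunction formula in the proof of Lemma~\ref{lemmacanonicalsystem} (the conductor vanishes since $D_i$ is normal, the other components being disjoint from it) one has $K_{D_i}\sim_\Q(\tfrac{m}{a_i}+1)K_X|_{D_i}$, so $K_X|_{D_i}$ is torsion; and since $\O_X(mK_X)|_{D_i}=\O_{D_i}(a_iD_i)=N_{D_i/X}^{\otimes a_i}$, the normal bundle $N_{D_i/X}$ is torsion as well. Replacing $m$ by a large multiple we may assume $\O_X(mK_X)|_{D_i}=\O_{D_i}$ for every $i$, with $D=\sum a_iD_i\in|mK_X|$.

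\textbf{Step 3 (deformation argument), and the main obstacle.} Study the Douady space of $X$ at $[D_i]$, whose Zariski tangent space is $H^0(D_i,N_{D_i/X})$; this is $\bC$ if $N_{D_i/X}\cong\O_{D_i}$ and $0$ otherwise (a section of a numerically trivial line bundle on a compact Kähler space is nowhere vanishing, so a nonzero section forces triviality). If some $D_i$ moves in a nonconstant family $\{D_{i,t}\}$, adjoining the remaining components of $D$ gives a nonconstant family of pluricanonical divisors, hence $h^0(\ell K_X)\ge 2$ for suitable $\ell$ and $\kappa(X)\ge 1$ (if the $D_{i,t}$ are only algebraically, not linearly, equivalent one gets $q(X)>0$ instead and concludes via the Albanese map, e.g.\ by \cite[Thm.8.12]{Nak99b}). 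If instead every $D_i$ is rigid, one peels the $D_i$ off $D$ through the exact sequences $0\to\O_X(mK_X-jD_i)\to\O_X(mK_X-(j-1)D_i)\to N_{D_i/X}^{\otimes(1-j)}\to 0$ and lifts sections from the infinitesimal neighbourhoods of the $D_i$, the $H^1$-obstructions being controlled by the torus structure of $D_i$ (vanishing of the cohomology of nontrivial torsion line bundles, seen after pulling back to the torus cover via the split trace map), again producing a second section. Either way $\kappa(X)\ge 1$, so $\kappa(X)=1$; the alternative $K_X\equiv 0$ is the degenerate case $D=0$, excluded here by $K_X\not\equiv 0$. The main obstacle is exactly this Step 3: turning the torsion normal bundles and the torus structure of the $D_i$ into an honest second pluricanonical section. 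Torsion — even trivial — normal bundle alone is insufficient (a fibre $\{p\}\times S$ of a product $E\times S\to E$ has trivial normal bundle yet does not move in a linear system), so one must genuinely use that $D$ is pluricanonical and $K_X$ nef, together with the torus structure of the $D_i$, both to kill the relevant cohomology and, in the rigid case, to force $\tilde q(X)>0$.
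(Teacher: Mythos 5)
Your Steps 1 and 2 coincide with the paper's setup: no rational curves from the Steinness of $\tilde X$, Lemma~\ref{lemmacanonicalsystem} to put the pluricanonical divisor in normal form, Lemma~\ref{lemmathreefold} (with its hypothesis verified exactly as you do, via the Stein preimage of $D_i$ in $\tilde X$) to identify each component with an \'etale quotient of a torus, and the resulting torsion statements for $K_X|_{D_i}$ and the normal bundle. The gap is precisely where you locate it: Step~3 is not a proof. The Douady-space dichotomy does not close the rigid case, and the ``peeling'' sequences do not either: the obstruction to lifting a section of $N_{D_i/X}^{\otimes(1-j)}$ along $0\to\sO_X(mK_X-jD_i)\to\sO_X(mK_X-(j-1)D_i)\to N_{D_i/X}^{\otimes(1-j)}\to 0$ lives in $H^1(X,\sO_X(mK_X-jD_i))$, a group on $X$ about which the torus structure of $D_i$ says nothing; and, as your own example $E\times S$ shows, even trivial graded pieces have non-vanishing $H^1$, so ``vanishing of cohomology of nontrivial torsion line bundles'' cannot be the whole mechanism.

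The paper fills this in with two ingredients you are missing. First, it splits off the case $q(X)>0$ (settled by abundance in dimension $\le 3$ for generically large fundamental group together with a $C_{n,m}$-argument, as in the proof of Proposition~\ref{propositionpseudoeffective}), so that one may assume $H^1(X,\sO_X)=0$; then $0\to\sO_X\to\sO_X(nD)\to\sO_{nD}(nD)\to 0$ reduces everything to producing a single nonzero element of $H^0(nD,\sO_{nD}(nD))$ for some $n$ --- the vanishing of $q(X)$ kills all lifting obstructions on $X$ at one stroke and is the clean substitute for your obstruction bookkeeping. Second, the rigid case is overcome by Miyaoka's theorem: passing to cyclic \'etale covers of a tubular neighbourhood $U\supset D$ that make $D|_U$ and $K$ near $D$ honestly trivial, Ran's theorem \cite[Cor.2]{Ran92} gives smoothness of the versal deformation space of $S=g^*D$, and then \cite[Thm.4.2, Cor.4.6]{Miy88} yields that $h^0(nD,\sO_{nD}(nD))$ grows linearly in $n$. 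It is this growth statement on the infinitesimal neighbourhoods --- not an actual motion of $D$ inside $X$ --- that manufactures the second pluricanonical section; without invoking a result of this strength your argument does not conclude.
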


\begin{proof}
Suppose that $K_X \not\equiv 0$. Since $K_X^2=0$ (cf. Remark \ref{remarkequiv}), we have
$\kappa(X) \leq 1$. If $q(X)>0$ we note that the abundance conjecture holds for compact K\"ahler manifolds
of dimension $\leq 3$ with generically large fundamental group \cite{CZ05}.
We can then use a $C_{n,m}$-argument to conclude, cf. the proof of Proposition \ref{propositionpseudoeffective}.

Suppose now that $q(X)=0$. 
Let $\sum a_i D_i$ be an effective divisor in some pluricanonical system $|m K_X|$,
and set $D := D_1$. For every $n \in \N$ the exact sequence
\[
0 \rightarrow \sO_X \rightarrow \sO_X(nD) \rightarrow \sO_{nD}(nD) \rightarrow 0
\]
induces a short exact sequence
\[
0 \rightarrow H^0(X, \sO_X) \rightarrow H^0(X, \sO_X(nD)) \rightarrow H^0(nD, \sO_{nD}(nD)) \rightarrow H^1(X, \sO_X)=0.
\]
We claim that $H^0(nD, \sO_{nD}(nD)) \neq 0$ for some $n$; then we get  $h^0(X, \sO_X(nD)) \geq 2$
and hence $\kappa(X) \geq 1$. 

{\em Proof of the claim.}
By the Lemmas \ref{lemmathreefold} and \ref{lemmacanonicalsystem} 
we know that $D$ is an \'etale quotient of a torus,
in particular it is smooth. Since $D$ does not meet
the irreducible components $D_i$ for $i \neq 1$, we have
\[
K_D \sim (K_X+D)|_D \sim_\Q  (\frac{a_1}{m}+1) D|_D.
\]
In particular since $K_D$ is a torsion line bundle, we see that $D|_D$ is a torsion line bundle.
Let now $D \subset U$ be an analytic neighbourhood, then $D|_U$ is torsion, so there exists
a cyclic \'etale covering \holom{g}{N}{U} such that $g^* D|_U$ is trivial. In particular
the normal bundle of $S:=g^*D$ is trivial. Similarly we see that $K_N$ is torsion, so up to taking
another cyclic covering we can suppose that $K_N|_S$ is trivial. In particular the canonical bundle
of $S$ is trivial by adjunction, so the versal deformation
space of $S$ is smooth by \cite[Cor.2]{ Ran92}. Thus $S$ satisfies the conditions of \cite[Thm.4.2]{Miy88},
which by \cite[Cor.4.6]{Miy88} implies that
\[
h^0(nD, \sO_{nD}(nD))
\]
grows with order $n$. In particular it is non-zero for some $n$ sufficiently large and divisible. 
\end{proof}

By \cite{Nak99b} the proposition has the following

\begin{corollary} \label{corollaryfoure}
Let $X$ be a smooth compact K\"ahler fourfold 
such that $\kappa(X) \geq 0$ and $K_X^2 \cdot \omega^2 \leq 0$ for some K\"ahler form $\omega$. Then $I_4$ holds for $X$. 
\end{corollary}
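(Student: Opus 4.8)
The plan is to reduce the corollary to Proposition~\ref{propositionabundance} and then dispose of the two cases it leaves open. First I note that the assertion ``$I_4$ holds for $X$'' has content only when the universal cover $\tilde X$ is biholomorphic to $\C^4$, so I may and will assume this; in particular $\tilde X$ is Stein, and hence carries no compact positive-dimensional analytic subset. Together with the standing hypotheses $\kappa(X)\ge 0$ and $K_X^2\cdot\omega^2\le 0$ for some K\"ahler form $\omega$, we are then exactly in the situation of Proposition~\ref{propositionabundance}.

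Applying that proposition, I distinguish the two possibilities it allows: either $K_X\equiv 0$, or $K_X\not\equiv 0$, in which case the proposition forces $K_X\equiv 0$ or $\kappa(X)=1$; since the first alternative is incompatible with $K_X\not\equiv 0$, this means $\kappa(X)=1$. So it suffices to treat the two cases $\kappa(X)=1$ and $K_X\equiv 0$.

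If $\kappa(X)=1=\dim X-3$, then $X$ lies in the range of Nakayama's theorem \cite{Nak99b} for compact K\"ahler manifolds of Kodaira dimension $\ge n-3$, which says that $X$ is a torus up to finite \'etale cover --- precisely $I_4$ for $X$. (A posteriori this case is empty, since a torus has Kodaira dimension $0$, but I will not need that.) If instead $K_X\equiv 0$, I would invoke the Beauville-Bogomolov decomposition theorem: after a finite \'etale cover $X'\to X$ one has $X'\simeq T\times\prod_i Y_i\times\prod_j Z_j$, with $T$ a complex torus, the $Y_i$ simply connected Calabi-Yau manifolds, and the $Z_j$ simply connected hyperk\"ahler manifolds. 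The universal cover of $X'$ is the same as that of $X$, namely $\C^4$; on the other hand it is $\C^{\dim T}\times\prod_i Y_i\times\prod_j Z_j$. Since $\C^4$ is Stein it contains no compact positive-dimensional subvariety, so all the factors $Y_i$ and $Z_j$ are absent and $X'=T$ is a torus. In both cases $I_4$ holds for $X$, as desired.

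As for where the difficulty lies: the corollary is purely formal given Proposition~\ref{propositionabundance}, the only further external ingredient being Nakayama's theorem \cite{Nak99b} in the range $\kappa\ge n-3$, which serves only to dispatch the (ultimately empty) case $\kappa(X)=1$. The genuine obstacles --- describing the components of $|mK_X|$, identifying them as \'etale quotients of tori through Lemmas~\ref{lemmathreefold} and~\ref{lemmacanonicalsystem}, and running the Miyaoka-type deformation argument that yields nonzero sections of $\sO_{nD}(nD)$ --- have already been overcome in the proof of Proposition~\ref{propositionabundance}, and I do not expect any new difficulty to surface at this final step.
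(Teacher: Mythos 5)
Your proof is correct and takes essentially the same route as the paper, whose entire argument here is the one line ``By \cite{Nak99b} the proposition has the following'': apply Proposition~\ref{propositionabundance}, dispatch the case $\kappa(X)=1$ by Nakayama's theorem for $\kappa\ge n-3$, and handle $K_X\equiv 0$ via the Beauville--Bogomolov decomposition exactly as is done in the proof of Theorem~\ref{theoremfourd}. Your write-up just makes explicit the case split that the paper leaves implicit.
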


\section{Non-vanishing via the Hard Lefschetz theorem}

\begin{theorem} \label{theoremcohomology}
Let $X$ be a smooth projective fourfold 
such that $\tilde X \simeq \bC^4$ or, more generally, that there is a proper modification $\tilde \C^4 \to \C^4$ and a surjective map $\tilde \C^4 \to \tilde X.$ 
Suppose that $\kappa (X) = - \infty.$ 
Then there exists $C > 0$ such that for all $m \in \bN:$
$$ h^0(X,\Omega^q_X(mK_X)) \leq C.$$
\end{theorem}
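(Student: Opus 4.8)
The plan is to bound $h^0(X, \Omega^q_X(mK_X))$ for all $q$ and all $m$ by combining two facts: the manifold $X$ has no rational curves (so $K_X$ is nef), and $\kappa(X) = -\infty$, so that after suitable contractions and use of the non-uniruledness/Stein hypothesis one can control the pluricanonical and twisted-canonical cohomology. The first step is to observe that, since $X$ is a smooth projective fourfold whose universal cover (or a proper modification of $\C^4$) maps onto $\tilde X$, the variety $X$ is Stein-dominated, hence contains no rational curves; by the cone theorem $K_X$ is nef. Combined with $\kappa(X) = -\infty$, one knows $K_X$ is nef but not pseudo-effective would contradict non-uniruledness in low dimension — here instead we only need that $mK_X$ has no sections, i.e. $h^0(X, mK_X) = 0$ for all $m > 0$, which is exactly $\kappa(X) = -\infty$.

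Next I would use the hard Lefschetz theorem together with a Hodge-theoretic identification to reduce the twisted cohomology to untwisted plurigenera-type data. Concretely, for a nef (hence pseudo-effective with trivial multiplier ideals, up to passing to a semi-positive approximation) line bundle, the surjectivity in the hard Lefschetz theorem gives that $h^0(X, \Omega^q_X \otimes mK_X)$ is controlled by $h^0(X, \Omega^{q-j}_X \otimes (\text{lower twist}))$ and ultimately, via Serre duality and the Bogomolov-type vanishing, by $h^0(X, mK_X)$ and $h^0$ of forms with no twist. The key point is that if some $h^0(X, \Omega^q_X(mK_X))$ were unbounded in $m$, then the corresponding graded piece would force $mK_X$ (or a related line bundle built from it) to be big, contradicting $\kappa(X) = -\infty$. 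So the bound $C$ exists: in fact one expects $h^0(X, \Omega^q_X(mK_X))$ to stabilise, since $K_X$ is nef of numerical dimension zero along the relevant directions.

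More precisely, the argument I would run is: suppose for contradiction that $h^0(X, \Omega^q_X(mK_X)) \to \infty$ for some fixed $q$. Take the saturation of the evaluation $\sO_X^{\oplus N} \to \Omega^q_X(mK_X)$ for large $N = h^0$; tensoring down and taking exterior/symmetric powers of the resulting subsheaves produces a section of a power of $\det$, hence of a multiple of $q \cdot mK_X + (\text{effective})$ pulled back from a resolution, which would give $\kappa(X) \geq 0$, a contradiction. Alternatively, one invokes the boundedness of the set of line bundles $L$ with $h^0(X,L) \neq 0$ inside a fixed numerical class together with the fact that $\{mK_X\}$ lives on a ray; since $K_X$ is nef, $h^0(X, \Omega^q_X(mK_X))$ is eventually polynomial in $m$ by a twisted Riemann-Roch/vanishing argument, and the leading term is a positive multiple of $K_X^{4}$ or $K_X^{3}\cdot(\ldots)$, all of which vanish because $\kappa(X) = -\infty$ forces the numerical dimension of $K_X$ to be small (indeed $K_X^4 = 0$, and the intermediate intersection numbers with $c_i(X)$ are constrained). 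Hence the polynomial is constant, giving the uniform bound $C$.

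The main obstacle, as I see it, is making the hard Lefschetz reduction genuinely uniform in $m$: the hard Lefschetz theorem for $\Omega^q_X \otimes L$ with $L$ only nef (not ample, not hermitian semi-positive a priori) requires either passing to a singular metric on $K_X$ with trivial multiplier ideals — which is available since $K_X$ is nef, by taking $\epsilon\omega$-approximations and letting $\epsilon \to 0$, controlling the multiplier ideals because $K_X$ has no sections — or using Enoki/Takegoshi-type injectivity theorems. Getting the multiplier ideals $\sJ(h^m)$ to be trivial for the metric on $mK_X$, in the regime $\kappa(X) = -\infty$, is the delicate analytic input; once that is in hand, the hard Lefschetz surjection $H^0(X, \Omega^q_X(mK_X)) \twoheadrightarrow$ (graded pieces involving no positive twist) makes the bound immediate because the target spaces are finite-dimensional and independent of $m$. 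So I would structure the proof as: (1) $K_X$ nef; (2) construct the metric with trivial multiplier ideals using $\kappa(X)=-\infty$; (3) apply hard Lefschetz/injectivity to get a surjection onto $m$-independent cohomology; (4) conclude $h^0(X,\Omega^q_X(mK_X)) \leq C$.
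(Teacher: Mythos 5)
There is a genuine gap, and in fact the central mechanism you propose points in the wrong direction. The hard Lefschetz theorem of Demailly--Peternell--Schneider gives a \emph{surjection}
$$H^0(X,\Omega^{n-q}_X\otimes L\otimes \sJ(h))\twoheadrightarrow H^q(X,K_X\otimes L\otimes \sJ(h)),$$
i.e.\ a \emph{lower} bound on $h^0$ of twisted forms in terms of higher cohomology of $K_X\otimes L$. A surjection onto an $m$-independent target gives no control whatsoever on the source, so your step (3)--(4) cannot conclude $h^0(X,\Omega^q_X(mK_X))\leq C$. The paper uses hard Lefschetz in exactly the opposite way: in Corollary \ref{corollarymultiplier} the quadratic growth of $h^2(X,mK_X)$ (from Riemann--Roch) is pushed through hard Lefschetz to show that $h^0(X,\Omega^2_X\otimes mK_X\otimes\sJ(h^m))$ grows quadratically, and it is Theorem \ref{theoremcohomology} that then supplies the contradiction. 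So the theorem you are asked to prove is the input that makes the hard Lefschetz argument useful, not a consequence of it. Your Riemann--Roch alternative fails for the same underlying reason: $\Omega^q_X(mK_X)$ is a vector bundle, Riemann--Roch computes $\chi$ and not $h^0$, no vanishing is available, and indeed $h^2(X,mK_X)$ genuinely can grow quadratically here.

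The one salvageable idea in your proposal --- taking the subsheaf $\sS_m\subset\Omega^q_X(mK_X)$ generated by global sections and passing to its determinant --- is how the paper begins, but it only disposes of the easy case $\rk\sS_m=\rk\Omega^q_X$, where generic spannedness of $\det\Omega^q_X(mK_X)$ gives $\kappa(X)\geq 0$. In the remaining case $\sL_m=\det\sS_m$ is merely a line subsheaf of $\bigwedge^{r_m}\Omega^q_X(mK_X)$, not a positive multiple of $K_X$ plus an effective divisor, and no contradiction with $\kappa(X)=-\infty$ follows from exterior or symmetric powers. This is where all the work lies: the paper invokes the Campana--Peternell stability results \cite{CP11} to show $\det Q$ is pseudo-effective and $\kappa(\sL)\leq 2$, obtains a nontrivial fibration $X\dashrightarrow Y$ with $1\leq\dim Y\leq 2$ whose general fibre is not of general type, and then applies Lemma \ref{lemmainduction} --- which is where the hypothesis on the universal cover $\tilde X$ actually enters --- to conclude that $X$ is bimeromorphically a torus up to finite \'etale cover, contradicting $\kappa(X)=-\infty$. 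Your argument never uses the hypothesis on $\tilde X$ beyond the absence of rational curves, which cannot suffice: the statement is false for, say, a general fourfold with $K_X$ nef and $\kappa=-\infty$ conjecturally nonexistent but certainly not ruled out by nefness alone.
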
 

\begin{proof} 
Suppose to the contrary that there is some $q$ (necessarily $1 \leq q \leq 3$) such that $h^0(X,\Omega^q_X(mK_X))$ is 
not bounded. Then we find a a constant $K > 0$ such that 
$$ h^0(X,\Omega^q_X(mK_X)) \geq Km$$
for $m \gg 0.$ Let $\sS_m \subset \Omega^q_X(mK_X) $ be the subsheaf generated by the global sections of $\Omega^q_X(mK_X).$ 
Let $r_m$ be the rank of $\sS_m$. If $r_m= \rk \Omega^q_X,$ then $\Omega^q_X(mK_X)$ is generically spanned, so does
its determinant, and hence $\kappa (X) \geq 0.$ \\
So $\sS_m$ is always a subsheaf of smaller rank. We set 
$$ \sL_m := \det \sS_m \subset  \bigwedge^{r_m} \Omega^q_X(mK_X). $$
We fix $m_0$ such that $h^0(X, \sL_{m_0}) \geq 2$ and set $\sL = \sL_{m_0}; r = r_{m_0}.$ 
We obtain an exact sequence
$$ 0 \to \sL \to \bigwedge^r \Omega^q(m_0K_X) \to Q \to 0, $$
with quotient $Q$ which we can suppose to be torsion-free. 
By \cite[Thm.1.4]{CP11} we know that $\det  Q$ is pseudo-effective, and for a suitable number $N,$ there is a decomposition
$$ NK_X = \sL + \det Q. $$ 
Applying \cite[Thm.2.3,Cor.2.10]{CP11} it follows that $\kappa (\sL) \leq 2.$ 
Let 
$$ \phi: X \dasharrow Y$$
be the rational map defined by $H^0(X,\sL),$ so that $1 \leq \dim Y \leq 2.$ 
Then a desingularization of a general fibre $X_y$ is not of general type by \cite[Prop.2.9]{CP11}. 
Hence Lemma \ref{lemmainduction} applies, and some finite \'etale cover of $X$ is bimeromorphically a torus, contradicting the assumption $\kappa (X) = - \infty.$ 
\end{proof} 

\begin{corollary} \label{corollarymultiplier}
Let $X$ be a smooth projective fourfold with $\tilde X \simeq \bC^4.$ Let $h$ be a possibly singular
metric on $K_X$ with semi-positive curvature current ($h$ exists since $K_X$ is nef). Let $\sJ(h^m) $ be the 
multiplier ideal of $h_m$ and let $V_m$ be the subscheme defined by $\sJ(h^m).$ If $\dim V_m \leq 1$ for all $m \gg 0$, then
$K_X^2 \cdot c_2(X) = 0.$ 
\end{corollary}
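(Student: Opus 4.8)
The plan is to derive the vanishing of $K_X^2 \cdot c_2(X)$ by contradiction via Hirzebruch–Riemann–Roch combined with the Hard Lefschetz theorem for the line bundle $K_X$ equipped with the metric $h$. Suppose $K_X^2 \cdot c_2(X) \neq 0$. By Riemann–Roch on the fourfold $X$, the Euler characteristic $\chi(X, mK_X)$ is a polynomial in $m$ whose leading terms are controlled by $K_X^4 = 0$ (since $K_X$ is nef and the universal cover is Stein, one has $K_X^4=0$; more carefully, $\kappa(X)\le 3$ and since there are no rational curves $K_X$ is nef, and one checks $K_X^i = 0$ for $i$ large), so the dominant surviving term is the coefficient involving $K_X^2\cdot c_2(X)$. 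Concretely $\chi(X,mK_X) = \frac{m^2}{12} K_X^2\cdot\bigl(K_X^2 + c_2(X)\bigr) \cdot (\text{const}) + O(m)$, and under our assumption this grows at least quadratically in $|m|$. Hence $\sum_q (-1)^q h^q(X, mK_X)$ is unbounded, so some $h^q(X, mK_X)$ grows at least quadratically.

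The second step is to transfer this growth to the twisted-forms groups $h^0(X, \Omega^q_X(mK_X))$ using Hard Lefschetz. Since $h$ is a (possibly singular) metric on $K_X$ with semi-positive curvature current, the Hard Lefschetz theorem in the form of Demailly–Peternell–Schneider (or Takegoshi's version) gives a surjection
$$
H^0(X, \Omega^n_X \otimes mK_X \otimes \sJ(h^m)) \twoheadrightarrow H^q(X, \omega_X \otimes (mK_X) \otimes \sJ(h^m)) \otimes \bigwedge^{?}
$$
—more precisely, an isomorphism between $H^q(X, mK_X \otimes \sJ(h^m))$ and a subquotient of $H^0(X, \Omega^q_X(mK_X) \otimes \sJ(h^m))$ obtained by wedging with powers of a Kähler class. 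Because $\dim V_m \le 1$ for $m \gg 0$, the ideal $\sJ(h^m)$ cuts out only a curve, so the difference between the cohomology of $mK_X$ and that of $mK_X \otimes \sJ(h^m)$ (and between $\Omega^q_X(mK_X)$ and $\Omega^q_X(mK_X)\otimes\sJ(h^m)$) is supported on a $1$-dimensional scheme and therefore grows at most linearly in $m$. Consequently the quadratic growth of some $h^q(X, mK_X)$ forces $h^0(X, \Omega^q_X(mK_X))$ to be unbounded as well.

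The final step is to invoke Theorem \ref{theoremcohomology}: since $X$ is a smooth projective fourfold with $\tilde X \simeq \bC^4$, if $\kappa(X) = -\infty$ then $h^0(X, \Omega^q_X(mK_X))$ is bounded by a constant $C$ for all $m$. We have just shown it is unbounded, so $\kappa(X) \ge 0$. But now Corollary \ref{corollaryfoura} (projective fourfold with $\kappa \ge 0$ implies $I_4$, hence $X$ is a torus up to finite étale cover) shows $K_X \equiv 0$, whence $K_X^2 \cdot c_2(X) = 0$ trivially — contradicting our assumption. Therefore $K_X^2 \cdot c_2(X) = 0$.

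I expect the main obstacle to be the bookkeeping in the second step: pinning down the precise Hard Lefschetz statement for a line bundle with a singular semi-positive metric (which multiplier ideals appear, on which side of the isomorphism) and then verifying carefully that passing between $mK_X$ and $mK_X \otimes \sJ(h^m)$, and between the plain and multiplier-twisted versions of $\Omega^q_X(mK_X)$, only costs an error term that is $O(m)$ rather than $O(m^2)$. This hinges on the hypothesis $\dim V_m \le 1$: one uses the exact sequences $0 \to \sJ(h^m) \to \sO_X \to \sO_{V_m} \to 0$ (tensored with the relevant bundles) and bounds $h^\bullet$ of sheaves on a curve linearly in the twist. The Riemann–Roch computation and the appeal to the earlier results are then routine.
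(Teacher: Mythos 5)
Your overall strategy (Riemann--Roch with $K_X^4=0$ to get quadratic growth of some $h^q(X,mK_X)$, passage to the multiplier-ideal-twisted groups via the hypothesis $\dim V_m\le 1$, the Hard Lefschetz surjection of Demailly--Peternell--Schneider onto $H^0(X,\Omega^{4-q}_X\otimes mK_X\otimes\sJ(h^m))$, and finally Theorem \ref{theoremcohomology} plus Corollary \ref{corollaryfoura} to close the loop) is exactly the paper's strategy, and you even make explicit the case distinction on $\kappa(X)$ that the paper leaves implicit. However, there is a genuine gap in your second step, precisely at the point you flag as ``the main obstacle''. You only know that \emph{some} $h^q(X,mK_X)$ with $1\le q\le 3$ grows quadratically, and for $q=1$ your reduction to $h^1(X,(m+1)K_X\otimes\sJ(h^m))$ does not go through: from the sequence $0\to\sJ(h^m)\otimes L\to L\to L|_{V_m}\to 0$ you must absorb an error term $h^1(V_m,L|_{V_m})$, and this is \emph{not} $O(m)$ in general. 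The scheme $V_m$ is not a fixed curve with a varying line bundle; it is a varying, increasingly thickened one-dimensional scheme (by subadditivity $\sJ(h^m)\subset\sJ(h)^m$, so the multiplicities grow at least linearly), and for instance if $V_m$ were the $m$-th infinitesimal neighbourhood of a fixed curve in the fourfold, $h^1(V_m,\sO_{V_m})$ already grows like $m^3$ because of the symmetric powers of the rank-three conormal bundle. So the claimed linear bound fails, and the quadratic growth could be swallowed by the error term when $q=1$.

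The paper avoids this entirely by first pinning down the sign: Miyaoka's inequality $K_X^2\le 3c_2(X)$ (applied with the nef classes $K_X$) gives $K_X^2\cdot c_2(X)\ge \tfrac13 K_X^4\ge 0$, so the assumption $K_X^2\cdot c_2(X)\ne 0$ forces $K_X^2\cdot c_2(X)>0$ and hence $\chi(X,mK_X)\sim \tfrac{m^2}{24}K_X^2\cdot c_2(X)\to+\infty$. Since $h^0$ and $h^4$ are disposed of (else $\kappa(X)\ge 0$ and one concludes by Corollary \ref{corollaryfoura}), it is specifically $h^2(X,mK_X)$ that grows quadratically, and for $q=2$ the passage to $H^2(X,(m+1)K_X\otimes\sJ(h^m))$ is free: the obstruction group $H^2(V_m,\cdot)$ vanishes for dimension reasons since $\dim V_m\le 1$, with no need for any quantitative estimate on $V_m$. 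Your argument works verbatim for $q=2$ and $q=3$; to repair it you should add the Miyaoka step (or otherwise exclude the possibility that only $h^1$ grows). A minor further point: the Hard Lefschetz statement you want is the surjection $H^0(X,\Omega^{n-q}_X\otimes L\otimes\sJ(h))\twoheadrightarrow H^q(X,\Omega^n_X\otimes L\otimes\sJ(h))$ given by cupping with $\omega^q$, not an isomorphism with a subquotient; combined with $\sJ(h^m)\subset\sO_X$ this directly bounds $h^0(X,\Omega^{n-q}_X\otimes mK_X)$ from below, so your extra worry about comparing $\Omega^q_X(mK_X)$ with its multiplier-twisted version is unnecessary.
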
 

\begin{proof} We follow the line of arguments in \cite{DPS01} and \cite{COP10}. 
Assume that $K_X^2 \cdot c_2(X) \ne 0.$ Then, due to Miyaoka's inequality $K_X^2 \leq 3c_2(X), $ we have
$K_X^2 \cdot c_2(X) \geq K_X^4 \geq 0,$
hence 
$$ K_X^2 \cdot c_2(X) > 0.$$ 
Applying Riemann-Roch and having in mind that $K_X^4 = 0$, there is a constant $C > 0,$ such that 
$$ h^2(X,mK_X) \geq Cm^2.$$
Since $H^2(V_m, K_X \otimes mK_X \vert _{V_m})$ vanishes for reasons of dimension,
the cohomology sequence
$$ H^2(X,K_X \otimes mK_X \otimes \sJ(h^m)) \to H^2(X,K_X \otimes mK_X) \to H^2(V_m, K_X \otimes mK_X \vert _{V_m}) = 0,$$
implies that
$$ h^2(K_X \otimes mK_X \otimes \sJ(h^m)) \geq Cm^2. $$
By the Hard Lefschetz Theorem \cite[Thm.2.1.1]{DPS01} this implies 
$$ h^0(X,\Omega^2_X \otimes mK_X \otimes \sJ(h^m)) \geq Cm^2,$$
contradicting the previous theorem. 
\end{proof} 

\begin{corollary}  \label{corollaryfourf}
Let $X$ be a compact K\"ahler fourfold with $\tilde X \simeq \bC^4.$ 
If $K_X$ is hermitian semi-positive and $K_X^2 \cdot c_2(X) \neq 0$,
then $\kappa(X) \geq 0$.
\end{corollary}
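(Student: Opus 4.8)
The plan is to argue by contradiction, assuming $\kappa(X)=-\infty$, and to split the analysis according to the algebraic dimension $a(X)$. Since $K_X$ is hermitian semi-positive it is nef and carries a \emph{smooth} metric $h$ of semi-positive curvature, so the multiplier ideal sheaves $\sJ(h^m)$ are all trivial --- this is the only, but essential, use of the hypothesis on $K_X$. First I would record that $K_X^4=0$ (otherwise $K_X$ is nef and big, forcing $\kappa(X)=4$), so that Hirzebruch--Riemann--Roch gives
$$\chi(X,mK_X)=\frac{K_X^2\cdot c_2(X)}{24}\,(m^2-m)+\chi(X,\sO_X),$$
a polynomial of degree exactly two in $m$ because $K_X^2\cdot c_2(X)\neq 0$. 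As $\kappa(X)=-\infty$ one has $h^0(X,mK_X)=0$, and for $m\geq 2$ also $h^4(X,mK_X)=h^0(X,(1-m)K_X)=0$, since a nonzero section there would give a nonzero effective divisor linearly equivalent to a positive multiple of $-K_X$ (or would force $K_X$ to be torsion), contradicting the nefness of $K_X$ (resp. $\kappa(X)=-\infty$). Hence $-h^1+h^2-h^3$ of $mK_X$ grows quadratically in absolute value, so there is a fixed $q\in\{1,2,3\}$ with $h^q(X,mK_X)\geq c\,m^2$ for infinitely many $m$ and some $c>0$.

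The core step is to convert this cohomological growth into holomorphic sections of a twisted cotangent bundle. Applying the Hard Lefschetz theorem \cite[Thm.2.1.1]{DPS01} to the nef line bundle $(m-1)K_X$ with its smooth semi-positive metric (hence with trivial multiplier ideal), wedging with $\omega^q$ yields a surjection $H^0(X,\Omega^{4-q}_X\otimes(m-1)K_X)\twoheadrightarrow H^q(X,K_X\otimes(m-1)K_X)=H^q(X,mK_X)$, so $h^0(X,\Omega^{4-q}_X\otimes m'K_X)\geq c\,m'^2$ for infinitely many $m'$. Now I would repeat the argument from the proof of Theorem \ref{theoremcohomology}: let $\sS_{m'}\subset\Omega^{4-q}_X\otimes m'K_X$ be the subsheaf generated by global sections; its rank cannot equal $\rk\Omega^{4-q}_X$, for then $\Omega^{4-q}_X\otimes m'K_X$, and hence its determinant --- a positive multiple of $K_X$ --- would be generically globally generated and $\kappa(X)\geq 0$; so $\det\sS_{m'}\subset\bigwedge^{r}\Omega^{4-q}_X\otimes(rm'K_X)$ with $r<\rk\Omega^{4-q}_X$, and since $h^0(\sS_{m'})\geq c\,m'^2$ exceeds $r$ for $m'\gg 0$, its bidual is a line bundle $\sL$ with $h^0(X,\sL)\geq 2$. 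The ratio of two independent sections of $\sL$ is a non-constant meromorphic function, so $a(X)\geq 1$.

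It remains to treat the two cases. If $a(X)=4$, then $X$ is Moishezon and Kähler, hence projective, so Corollary \ref{corollarymultiplier} applies (the subschemes $V_m$ being empty) and gives $K_X^2\cdot c_2(X)=0$, a contradiction. If $0<a(X)<4$, then $X$ satisfies the hypotheses of $T_4$ --- take the identity $\C^4\to\C^4$ for the required modification and surjection, and note that $\tilde X$ Stein makes $\pi_1(X)$ generically large --- so Corollary \ref{corollaryfourb} shows that a finite \'etale cover of $X$ is bimeromorphic to a torus; then $\kappa(X)=0$, again a contradiction. Hence $\kappa(X)\geq 0$.

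The main obstacle is the second paragraph: passing from quadratic growth of a cohomology group $h^q(X,mK_X)$ to quadratic growth of holomorphic twisted forms $h^0(X,\Omega^{4-q}_X\otimes mK_X)$ --- which works precisely because hermitian semi-positivity trivialises the multiplier ideals in the Demailly--Peternell--Schneider Hard Lefschetz theorem --- and then extracting from this a genuine line bundle with at least two sections; the determinant-of-saturation bookkeeping from Theorem \ref{theoremcohomology} must be carried out with some care (in particular checking that the determinant really acquires two independent sections, and that full generic generation of $\Omega^{4-q}_X\otimes m'K_X$ would already give $\kappa(X)\geq 0$). By contrast, the Riemann--Roch computation and the final dichotomy on $a(X)$ are routine given the results already established.
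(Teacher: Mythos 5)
Your argument is correct and follows essentially the same route as the paper: Riemann--Roch with $K_X^4=0$ to force quadratic growth of some $h^q(X,mK_X)$, the Demailly--Peternell--Schneider hard Lefschetz theorem with trivial multiplier ideals to transfer this to $h^0(X,\Omega^{4-q}_X\otimes mK_X)$, the saturation/determinant trick to produce a non-constant meromorphic function, and then the dichotomy on $a(X)$ (projective case via Corollary \ref{corollarymultiplier}, intermediate algebraic dimension via Proposition \ref{propositionmiddlecases}). The only cosmetic differences are that you make the trichotomy on $a(X)$ explicit rather than "reducing a priori to $a(X)=0$", and you allow $q=3$ instead of killing $H^3(X,K_X\otimes mK_X)$ by the Kawamata--Viehweg-type vanishing of \cite{DP03}.
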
 

\begin{proof} In the proof of Corollary \ref{corollarymultiplier} the projectivity assumption
is used in the proof of the previous corollary only in two places. \\

1) We used Miyaoka's inequality, which is unknown in the K\"ahler case. If however $K_X^2 \cdot c_2(X) < 0,$ then 
we obtain at least quadratic growth of $H^1(X,K_X \otimes mK_X)$ or  $H^3(X,K_X \otimes mK_X) = 0$ and conclude in the same way as before, using our
stronger assumptions. Notice however that  $H^3(X,K_X \otimes mK_X) = 0$ due to \cite[Thm.0.1]{DP03}. \\

2) We used Theorem \ref{theoremcohomology} to produce a contradiction. In the K\"ahler setting, we may a priori assume that $a(X) = 0$ due to Proposition \ref{propositionmiddlecases}.
But this contradicts the growth condition
$$ h^0(X,\Omega^q_X(mK_X)) \geq Cm^2. $$ 
\end{proof}

\end{document}